%% file: IntrMirrKP2.tex
\documentclass[11pt]{amsart}
\usepackage{amscd}
\usepackage{amssymb}
\usepackage[centering,text={15.5cm,23cm}]{geometry} 
\usepackage[dvips,final]{graphicx}
\usepackage[dvipsnames]{xcolor}
\usepackage{caption}
\usepackage{comment}
\usepackage{wrapfig}
\usepackage[all]{xy}
\usepackage{mathrsfs}
\usepackage{mathtools}
\usepackage{marvosym}
\usepackage{stmaryrd}
\usepackage{textcase}
\usepackage{srcltx}
\usepackage{upgreek} 
\usepackage[inline]{enumitem} 
\usepackage{hyperref}
\hypersetup{
  colorlinks   = true,          
  urlcolor     = violet,          
  linkcolor    = blue,          
  citecolor   = violet             
}
\usepackage{tikz}
\usetikzlibrary{calc,intersections,patterns.meta,positioning,patterns}

\usepackage{booktabs}

\definecolor{shadecolor}{rgb}{1,0.9,0.7}

\newtheorem{theorem}{Theorem}[section]
\newtheorem{lemma}[theorem]{Lemma}
\newtheorem{lemma-definition}[theorem]{Lemma-Definition}
\newtheorem{proposition}[theorem]{Proposition}
\newtheorem{corollary}[theorem]{Corollary}

\newtheorem{maintheorem}{Theorem}

\theoremstyle{definition}

\newtheorem{assumption}[theorem]{Assumption}

\newtheorem{conventions}[theorem]{Conventions}

\newtheorem*{acknowledgement}{Acknowledgment}

\theoremstyle{remark}
\newtheorem{remark}[theorem]{Remark}

\numberwithin{equation}{section}
\numberwithin{figure}{section}

\newcommand {\lfor} {\llbracket}
\newcommand {\rfor} {\rrbracket}

\newcommand{\NN} {\mathbb{N}}
\newcommand{\ZZ} {\mathbb{Z}}
\newcommand{\QQ} {\mathbb{Q}}
\newcommand{\RR} {\mathbb{R}}
\newcommand{\CC} {\mathbb{C}}
\newcommand{\EE} {\mathbb{E}}
\newcommand{\FF} {\mathbb{F}}
\newcommand{\PP} {\mathbb{P}}
\renewcommand{\AA} {\mathbb{A}}
\renewcommand{\SS} {\mathbb{S}}
\newcommand{\GG} {\mathbb{G}}

\newcommand {\cC}  {\mathcal{C}}

\newcommand {\cL}  {\mathcal{L}}

\newcommand {\cR}  {\mathcal{R}}

\newcommand {\cT}  {\mathcal{T}}

\newcommand {\cX}  {\mathcal{X}}

\newcommand {\foF}  {\mathfrak{F}}

\newcommand {\foU}  {\mathfrak{U}}

\newcommand\bA{\mathbf{A}}

\newcommand\bsigma{{\boldsymbol{\sigma}}}

\newcommand\bdelta{{\boldsymbol{\delta}}}

\newcommand\btau{{\boldsymbol{\tau}}}
\newcommand\bomega{{\boldsymbol{\omega}}}
\newcommand\be{{\boldsymbol{e}}}

\newcommand {\Aut}  {\operatorname{Aut}}

\newcommand {\coker} {\operatorname{coker}}

\newcommand {\dlog} {\operatorname{dlog}}

\newcommand {\ev}  {{\operatorname{ev}}}

\newcommand {\sE} {{\mathsf E}}

\newcommand {\scrF}  {\mathscr{F}}
\newcommand {\fix}  {\mathrm{fix}}
\newcommand {\frakM} {\mathfrak{M}}

\newcommand {\gp}  {{\operatorname{gp}}}

\newcommand {\Int}  {\operatorname{Int}}

\renewcommand {\ker } {\operatorname{ker}}

\newcommand{\LL}{\mathbb{L}}

\newcommand {\lra}  {\longrightarrow}

\newcommand {\M} {\mathcal{M}}

\newcommand {\NE}  {\operatorname{NE}}

\renewcommand{\O}  {\mathcal{O}}

\newcommand {\ol} {\overline}

\newcommand {\scrM}  {\mathscr{M}}

\newcommand {\sing} {\mathrm{sing}}

\newcommand {\Spec} {\operatorname{Spec}}

\newcommand {\Tot}  {\operatorname{Tot}}

\newcommand {\virt}  {{\operatorname{virt}}}

\def\mydate{\ifcase\month \or January\or February\or March\or
April\or May\or June\or July\or August\or September\or October\or 
November\or December\fi \space\number\day,\space\number\year}

\date{\today}

\begin{document}

\title[Closed mirror symmetry from wall structures]{
From wall structures to closed mirror symmetry\\[.7ex]
\normalfont\large\NoCaseChange{The case of $K_{\PP^2}$: Renormalized periods over 
the positive real locus, closed Gromov--Witten invariants from wall functions, 
and tropical enumeration}}

\author{Michel van Garrel}
\author{Bernd Siebert}

\setcounter{tocdepth}{2}

\begin{abstract}
We recover closed Gromov--Witten invariants and renormalized mirror periods for
$K_{\PP^2}$ by the same operation on a wall function. This gives a direct
passage from the Gross--Siebert construction of intrinsic mirror pairs to
classical enumerative mirror symmetry. The link is a polynomiality theorem for
punctured invariants. Assuming the expected identification with the normalized
slab function, we also obtain a finite tree sum for closed Gromov--Witten
invariants in terms of types of plane tropical curves. The mechanism is expected
to extend to more general Calabi--Yau mirror pairs.
\end{abstract}

\maketitle
\tableofcontents


\section{Introduction}

Mirror symmetry as a mathematical research topic arguably began with the
prediction of the numbers of rational curves on quintic threefolds via
variations of period integrals in the work of Candelas, de la Ossa, Green,
Parkes \cite{CdGP91}. A big early success of the field was the proof of this
conjecture using localization by Givental \cite{Givental} and Lian, Liu,
Yau \cite{LianLiuYau}.

The generalization of these results to the mirror geometries provided by the
joint program of Mark Gross with the second author, and notably in terms of the
wall structure from \cite{affinecomplex}, has been a challenge for almost two
decades. The enumerative interpretation of scattering \cite{GPS}, tree
expansions \cite{ICM}, canonical coordinates \cite{RS}, the intrinsic mirror
construction via punctured Gromov--Witten invariants \cite{associativity,
CanScat}, and logarithmic mirror symmetry for $(\PP^2,E)$ \cite{vGRS1} have
provided increasingly concrete evidence that enumerative mirror symmetry should
be a tautology from this point of view.

The central challenge that has eluded us for so long is the treatment of a
planar slab function, with the minimal example the familiar mirror family
$\check X_t$ of (a toric degeneration of) $K_{\PP^2}$ reviewed in \S\ref{Sec:
intrinsic mirror geometry}:
\begin{enumerate}
\item[(A)]
How does one express the closed degree $d$ Gromov--Witten invariants
$N_d$ of $K_{\PP^2}$ in terms of the slab function that produces $\check X_t$?
\item[(B)]
How does an appropriate period integral that generalizes to compact
Calabi--Yau geometries compute the same expression in the slab function?
\item[(C)]
Do the $N_d$ have a tree sum expression analogous to the expansion of
the slab function as a sum over types of genus zero tropical maps to $\RR^2$ as
in \cite{ICM}?
\end{enumerate}
This paper answers all three questions transparently and, in our opinion, quite
beautifully. The method is expected to generalize at least to all
three-dimensional mirror pairs from \cite{affinecomplex}, and possibly to all
dimensions and even to intrinsic mirror pairs of \cite{CanScat}. We now state
our main results.

Write $f$ for the canonical slab function defined in \cite[\S3.2]{CanScat} in
the variables $x,y,z,t$ with $xyz=t$. We expect that $f=1+x+y+z+g(t)$ with
$g(t)\in t\cdot\CC\lfor t\rfor$ determined uniquely by the normalization
condition \cite{ICM}, but this is not needed for our answers to questions (A)
and (B). For $d,k>0$, put
\begin{equation}
\label{Eqn: W_{d,k}}
  W_{d,k}
    =\frac1k\left([t^dx^k]+[t^dy^k]+[t^dz^k]\right)\log f,
\end{equation}
where coefficients are read in the monomial expansion of \S\ref{SS: wall
invariants}. These numbers are sums of $1$-punctured Gromov--Witten invariants
of wall type in the three distinguished directions, namely
$W_{d,k}=\sum_{i=1}^3W_{d,-k\be_i}$ in the notation used there.

\begin{maintheorem}
\label{Thm: intro wall closed comparison}
For every $d>0$, the function
\[
  k\longmapsto(-1)^kW_{d,k},\qquad k\in\ZZ_{>0},
\]
extends to a polynomial of degree at most $2d-2$, whose value at $k=0$ is
$dN_d$. In particular, the closed invariants are recovered from finitely many
coefficients of the slab function by
\[
  N_d=-\frac1d\sum_{k=1}^{2d-1}\binom{2d-1}{k}W_{d,k}.
\]
\end{maintheorem}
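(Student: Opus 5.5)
The final formula is a formal consequence of the polynomiality statement, so I would prove polynomiality first and deduce the formula at the end. Write $P_d(k)=(-1)^kW_{d,k}$. If $P_d$ is a polynomial of degree at most $2d-2$, its $(2d-1)$-st finite difference vanishes:
\[
  \sum_{k=0}^{2d-1}(-1)^k\binom{2d-1}{k}P_d(k)=0 .
\]
Since $(-1)^kP_d(k)=W_{d,k}$ for $k>0$, this says $P_d(0)=-\sum_{k=1}^{2d-1}\binom{2d-1}{k}W_{d,k}$. Combined with $P_d(0)=dN_d$, this gives the displayed formula for $N_d$. Everything therefore reduces to two claims: (i) $k\mapsto(-1)^kW_{d,-k\be_i}$ is polynomial of degree $\le 2d-2$ for each $i$; (ii) the values at $k=0$ sum over $i$ to $dN_d$.

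For (i) I would work with the description of $W_{d,-k\be_i}$ as a $1$-punctured invariant of wall type, from \S\ref{SS: wall invariants}. I would apply the decomposition/gluing formula for punctured invariants along the toric degeneration. This writes the invariant as a finite sum over types of tropical curves in the central fibre's dual intersection complex. Each such curve carries one unbounded leg of weight $k$ in direction $-\be_i$, and the set of types is independent of $k$ once $k$ is large relative to $d$, or can be parametrized uniformly in $k$. For each type, the contribution factors as follows:
\begin{itemize}
\item a lattice index, which is a product of edge weights and determinants;
\item vertex contributions, given by relative/log invariants of toric pieces with one contact order depending affinely on $k$;
\item a universal sign $(-1)^k$ coming from the normal bundle $\O(-3)$ along the leg, of the kind familiar from multiple-cover formulas.
\end{itemize}
For the vertex terms I would invoke polynomiality of genus-zero log invariants in the contact orders, which follows from the piecewise-polynomiality of double-ramification-type cycles, or proved directly by torus localization on the toric pieces. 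The degree bound $2d-2$ I would obtain by counting: the virtual dimension bounds the number of $\psi$/$\lambda$ insertions that can produce powers of $k$, and a degree-$d$ class yields at most $2d-2$ such factors. I would double-check this bound against $d=1$, where $W_{1,k}$ should be $(-1)^k$ times a constant.

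For (ii), at $k=0$ the puncture becomes an ordinary marked point with trivial contact. The same tropical sum, evaluated at $k=0$, then computes the closed invariant of $K_{\PP^2}$ with one marked point constrained by the wall-type condition. Summing over the three directions, this constraint should amount to pulling back a class pairing to $d$ with the curve class, so the divisor equation produces $dN_d$.

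I expect the main obstacle to be justifying that the polynomial interpolating the values for $k>0$ really takes, at $k=0$, the value of a geometric closed invariant. The punctured moduli spaces change nature at $k=0$: negative contact orders are lost, and the leg contracts. To handle this, I would first try to express all $k$ uniformly as an integral over one fixed moduli space, for instance punctured maps to an expanded target twisted by $\O(k)$-type line bundle classes. Then $k$ would enter only through Chern characters that are polynomial in $k$, and setting $k=0$ would be tautological.
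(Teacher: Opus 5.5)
Your deduction of the closed formula from polynomiality and the constant-term identity, via the vanishing $(2d-1)$-st finite difference, is correct and is exactly how the paper concludes (Theorem~\ref{Thm: finite wall formula}). The gap is that (i) and (ii) are asserted by analogy, and the tools you name would not deliver them.

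First, you never reduce the wall invariant to something comparable with $N_d$. A wall type is a one-parameter family whose root vertex moves in the interior of a boundary edge of $B_0$. At $k=0$ there is no wall-type condition left, so ``the same tropical sum evaluated at $k=0$'' has no meaning as it stands. The paper needs three structural inputs here.
\begin{enumerate}
\item The splitting-vanishing lemma: types with two or more boundary attachments contribute zero. This puts all vertices of contributing types on $\partial B_0$.
\item A divisor identity on the evaluation stack, obtained by specializing the root vertex to a boundary vertex. It turns the wall condition into a point constraint $\varepsilon^*[q]$ pulled back from $S_i$, with $k_\tau=k\,m_\delta r_c$.
\item Gluing at a distinguished vertex $v_0$. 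This shows that the contributing punctured types are exactly the $\bdelta(k)$ built from closed rigid pairs $(\bdelta,v_0)$, uniformly for \emph{all} $k>0$. The gluing data $m_\delta$, $\mu_\delta$, the automorphism groups and the branch numbers $b_j$ are the same for every $k$.
\end{enumerate}
Stabilization of types only for large $k$ would not suffice, since the finite formula uses $k=1,\dots,2d-1$. After these reductions, all $k$-dependence sits in one central invariant $R_{\bdelta_0(k)}(q)$ on the ruled surface $Y_i$, whose curve class is $A+kf_i$. Because the class varies with $k$, polynomiality of log or double ramification invariants in the contact orders at fixed class is not the relevant statement. In any case it would give neither the sign $(-1)^k$ nor the value at $k=0$.

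Second, the mechanism producing polynomiality, the sign and the constant term is missing. Your ``sign from $\O(-3)$'' and the dimension count for the degree are heuristics. The paper uses $\GG_m$-localization along the ruling of $Y_i$.
\begin{itemize}
\item Fixed loci with a component in the negative section $S_i$ vanish (after the cut insertions $\sE$), because the obstruction bundle coming from $N_{Y|X}$ acquires a trivial summand.
\item In every remaining fixed map, the outgoing puncture lies on a degree-$k$ cover of a fiber $F_q$, totally ramified over $q$ and $y_q$. This cover is attached at a single node to a $k$-independent fixed map in $\scrM_0^+$, with new marking $p$.
\item The weights of $T_{F_q}(-q)\simeq\O(1)$ and $N_{Y|X}|_{F_q}\simeq\O(-1)$ (not $\O(-3)$) give $(-1)^{k-1}\hbar$. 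The smoothing line gives $(-\psi_p-\hbar/k)^{-1}$.
\item After pushing forward along $B\mu_k$, the cover therefore acts as $(-1)^k(1+k\psi_p/\hbar)^{-1}$ on the fixed contribution $\gamma_0(q)$.
\end{itemize}
Polynomiality of degree at most $2d_0-2\le 2d-2$ then follows from $\psi_p^{2d_0-1}=0$. This holds because $\psi_p$ is pulled back from $\overline{\scrM}_{0,1}(S_i,d_0)$ with the point condition at $q$, which has dimension $2d_0-2$. At $s=0$ one gets the localization of $(\varepsilon^+)^*[q]\cap[\scrM_0^+]^\virt$. The divisor axiom, with $F_q\cdot A=d_0$, identifies this with $d_0R_{\bdelta_0}$. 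Summing over distinguished vertices in the three $D_i$ gives $dN_d$.

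This is exactly the ``single fixed moduli space in which $k$ enters only through Chern classes'' you were hoping for. It is realized on the $T$-fixed locus of $\scrM_0^+$ rather than by twisting the target. Without it, your step (ii) is unproved.
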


This combines Corollary~\ref{Cor: wall versus closed comparison} and
Theorem~\ref{Thm: finite wall formula}.

Let $\check X_t$ be the intrinsic mirror family and $\Omega$ its canonical
holomorphic volume form. For $0<\lambda\ll1$, let $\check X_t^{>0}(\lambda)$ be
the compact truncation of its positive real locus defined in \S\ref{Sec:
positive real locus}. In a maximal degeneration of compact Calabi--Yau
threefolds, the positive real locus provides a singular $3$-chain in $W_6$ of
the monodromy weight filtration. The multi-valued period for this $3$-chain thus
provides complete information about the variation of Hodge structures in the
coefficients of $(\log t)^i$, $i=0,\ldots,3$. In our situation of canonical
coordinates with a normalized holomorphic volume form, the terms with $i=2,3$
are topological, while $i=1$ or $i=0$ capture the mirror Gromov--Witten
potential. In our present non-compact situation we use a renormalized version of
this statement that we prove directly.

\begin{maintheorem}
\label{Thm: intro period comparison} Under Assumption~\ref{Ass: analyticity},
the polynomial in Theorem~\ref{Thm: intro wall closed comparison} has degree
$2d-1$ and for every $d>0$ and $\lambda>0$ sufficiently small, we have
\[
  [t^d\log t]\int_{\check X_t^{>0}(\lambda)}\Omega
  =\sum_{k>0}W_{d,k}\lambda^k
   -\sum_{i=1}^3\sum_{k=1}^d W_{d-k,k\be_i}\lambda^{-k}.
\]
Thus the positive powers of $\lambda$ give the generating function of the
$W_{d,k}$. The finite sum involves the opposite outgoing contact orders, in
the notation of \S\ref{SS: wall invariants}.

The coefficient of $t^d\log t$ admits a rational continuation in $\lambda$,
with possible finite poles only at $0$ and $-1$. This continuation satisfies
\[
  \lim_{\lambda\to\infty}
    [t^d\log t]\int_{\check X_t^{>0}(\lambda)}\Omega
  =\sum_{k=1}^{2d-1}\binom{2d-1}{k}W_{d,k}
  =-dN_d.
\]
\end{maintheorem}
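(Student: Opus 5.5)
We would prove Theorem~\ref{Thm: intro period comparison} in three steps: compute the period over the truncated positive real locus, identify its $t^d\log t$ coefficient in terms of $\log f$, and then invoke Theorem~\ref{Thm: intro wall closed comparison}.

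\emph{Step 1: reduce the period to an integral of $\log f$.} On the positive real locus we parametrize $\check X_t$ torically. Locally we write $\check X_t=\{uv=f(x,y,z)\}$, or glue charts over the three chambers with the slab function as the gluing datum. The canonical form is $\Omega=\dlog x\wedge\dlog y\wedge\dlog u$ up to normalization. Integrating out the $u$-direction over the positive segment, truncated at level $\lambda$, gives a term $\log f-\log\lambda$ on the region where $f\geq\lambda$. What remains is a two-dimensional integral over the slice $\{xyz=t\}\cap\RR_{>0}^3$, which in logarithmic coordinates is a triangle of size $-\log t$. That integral is
\[
  \int\bigl(\log f(x,y,z)-\log\lambda\bigr)_+\,\dlog x\wedge\dlog y .
\]

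\emph{Step 2: extract the $t^d\log t$ coefficient.} We expand $\log f$ in monomials $t^dx^ay^b$. Only monomials invariant along one edge direction of the triangle contribute a linear factor $\log t$; these are the pure powers $t^dx^k$, $t^dy^k$, $t^dz^k$. Their contributions are collected near the three corners, where the truncation at $\lambda$ acts as a cutoff. Integrating $x^k$ against $\dlog x$ up to the cutoff then produces $\lambda^k/k$, which gives the $W_{d,k}\lambda^k$ terms through \eqref{Eqn: W_{d,k}}. The negative-power terms come from expanding the cutoff boundary $\{f=\lambda\}$ in the opposite directions, where $1/f$ contributes monomials $x^{-k}$ weighted by the coefficients $W_{d-k,k\be_i}$. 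Assumption~\ref{Ass: analyticity} is needed here to justify exchanging the sum with the integral for small $\lambda$. The finiteness of the negative-power sum, $k\leq d$, follows from degree bounds on the outgoing wall invariants.

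\emph{Step 3: rational continuation and the limit.} By Theorem~\ref{Thm: intro wall closed comparison}, $(-1)^kW_{d,k}=P_d(k)$ with $P_d$ a polynomial. The standard identity
\[
  \sum_{k>0}P(k)(-\lambda)^k=\frac{Q(\lambda)}{(1+\lambda)^{\deg P+1}}
\]
then shows that the positive part is rational, with a pole only at $-1$. The negative part is a Laurent polynomial, so its only finite pole is at $0$. Taking $\lambda\to\infty$, the negative part tends to $0$. The positive part tends to $-P_d(0)$, by the classical evaluation $\sum_{k\ge0}P(k)(-\lambda)^k\to0$ at infinity for rational continuations of this type. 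This limit is $-dN_d$.

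The binomial formula is the finite-difference identity
\[
  \sum_{k=0}^{2d-1}\binom{2d-1}{k}(-1)^kP(k)=0
\]
for $\deg P\leq 2d-2$. We note that it requires degree at most $2d-2$, so the claimed degree $2d-1$ must be reconciled with Theorem~\ref{Thm: intro wall closed comparison}. The first check would be whether the statement means at most $2d-1$, or whether the binomial identity is really applied to $P(k)\lambda$-shifted versions.

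The main obstacle is Step 2. One has to isolate the $\log t$ coefficient rigorously, and show that the corner contributions and the boundary-cutoff corrections produce exactly the stated Laurent terms with no cross terms from the mixed monomials $x^ay^b$. We would first work chamber by chamber in tropical coordinates, using the triangle geometry to see which monomials scale with $\log t$.
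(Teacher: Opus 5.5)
Your Steps~1--2 do not compute the integral in the statement. The set $\check X_t^{>0}(\lambda)$ of \S\ref{Sec: positive real locus} is cut out by $u,v\le1$ and $x,y,z\le\lambda$ on $uv=f\cdot t$, $xyz=t$. You dropped the factor $t$ and put the $\lambda$-cutoff in the $u$-direction. That is where your integrand $(\log f-\log\lambda)_+$ and the level set $\{f=\lambda\}$ come from, and neither occurs in the problem.

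With the correct set, the $u$-fibre over a point of the slice is $\log u\in[\log t+\log f,0]$. So the period is $-\int_{\Gamma(\lambda)}(\log t+\log f)\,\dlog x\wedge\dlog y$, taken over the logarithmic triangle $\Gamma(\lambda)$ cut out by $x,y,z\le\lambda$. This cutoff is what makes $\log f$ converge on the whole domain, so termwise integration is legitimate. What you call the main obstacle then becomes an explicit calculation. Integrating first in $y$ over $[t/(\lambda x),\lambda]$ gives the factor $2\log\lambda+\log x-\log t$, and for every integer $k\neq0$
\[
  \int_{\Gamma(\lambda)}t^rx^k\,\dlog x\wedge\dlog y
  =-\frac{t^r\lambda^k}{k}\log t+\Bigl(\text{terms without }\log t\Bigr).
\]
For $x^ay^b$ with $ab(a-b)\neq0$, the integral is a sum of three monomials in $t$ and $\lambda$ with no $\log t$ at all. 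So there are no cross terms to control.

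Your mechanism for the negative powers is also wrong. They do not come from $1/f$ along a boundary $\{f=\lambda\}$. They come from the monomials of opposite contact order $+k\be_i$ in $\log f$, e.g.\ $kW_{d-k,k\be_1}t^{d-k}(yz)^k=kW_{d-k,k\be_1}t^dx^{-k}$, to which the same formula applies with exponent $-k$. With the overall minus sign this gives $-\lambda^{-k}W_{d-k,k\be_1}$ as the coefficient of $t^d\log t$. The bound $k\le d$ holds simply because $d-k\ge0$. The terms $kW_{d,-k\be_1}t^dx^k$ give $+\lambda^kW_{d,-k\be_1}$, and cyclic symmetry of $\Gamma(\lambda)$ handles $y$ and $z$.

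Your Step~3 is correct and coincides with the paper's argument: Newton expansion, the identity $\sum_{k\ge0}\binom{k}{j}(-\lambda)^k=(-\lambda)^j/(1+\lambda)^{j+1}$, and the vanishing $(2d-1)$-st finite difference.

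Your doubt about the degree is justified. Theorem~\ref{Thm: intro wall closed comparison} bounds it by $2d-2$; for $d=1$ the polynomial is the constant $N_1=3$. The explicit $P_1,\dots,P_4$ in \S\ref{Sec: finite wall formulas} have degrees $0,2,4,6$, and only the upper bound is used anywhere. So ``$2d-1$'' should read $2d-2$. The exact degree comes only from the explicit extraction formula of \S\ref{Sec: finite wall formulas}, not from the argument above.
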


This follows from Theorem~\ref{Thm: logarithmic period coefficient} and
Proposition~\ref{Prop: asymptotic period}, together with Theorem~\ref{Thm: intro
wall closed comparison}. The statement on the polynomial degree follows by
direct computation from \eqref{Eqn: W_{d,k}}, which also
gives an explicit formula for $W_{d,k}$. The constant term $d=0$ does not have
enumerative significance, but can be computed to give $3\zeta(2)$ in accordance
with the Gamma conjecture \cite{AGIS}.

For the final statement, let $\cT_{d,k}^{\,i}$ be the sets of rooted trees
defined in \S\ref{Sec: finite wall formulas}. Their leaves are labeled by
$x,y,z$, with $d+k$ leaves of the $i$-th label and $d$ of each other label. An
alternative interpretation is as the set of isomorphism classes of types of
tropical disks in $\RR^2$ with the stated number of unbounded legs of
multiplicity~$1$ in directions $(1,0)$, $(0,1)$, $(-1,-1)$. Write $\widehat
V_\Gamma$ for the set of non-leaf vertices and $\Aut(\Gamma)$ for the set of
automorphisms preserving the root and the leaf labels. The following
answers~(C).

\begin{maintheorem}
\label{Thm: intro tree formula}
Assume that $f$ agrees with the normalized slab function $1+x+y+z+g(t)$ from the
construction of \cite{affinecomplex}, as in Assumption~\ref{Ass: explicit slab
shape}. Then, for every $d>0$,
\[
  N_d=-\frac1d\sum_{i=1}^3\sum_{k=1}^{2d-1}
    \frac1k\binom{2d-1}{k}
    \sum_{\Gamma\in\cT_{d,k}^{\,i}}
      \frac{(-1)^{|\widehat V_\Gamma|}}{|\Aut(\Gamma)|}.
\]
\end{maintheorem}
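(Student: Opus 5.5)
Theorem~\ref{Thm: intro wall closed comparison} already gives
\[
  N_d=-\frac1d\sum_{k=1}^{2d-1}\binom{2d-1}{k}W_{d,k},
\]
and by \eqref{Eqn: W_{d,k}} we have $W_{d,k}=\frac1k\sum_{i=1}^3[t^dx_i^k]\log f$, where $x_1=x$, $x_2=y$, $x_3=z$. The theorem therefore follows once we establish the identity
\[
  [t^d x_i^k]\log f=\sum_{\Gamma\in\cT^{\,i}_{d,k}}\frac{(-1)^{|\widehat V_\Gamma|}}{|\Aut(\Gamma)|}.
\]
This identity should come out of the explicit shape $f=1+x+y+z+g(t)$ granted by Assumption~\ref{Ass: explicit slab shape}. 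The plan is to substitute this shape and expand.

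First, I will expand $\log f=\log(1+u)=\sum_{n\ge1}\frac{(-1)^{n-1}}{n}u^n$ with $u=x+y+z+g(t)$. Using $xyz=t$, I will reduce each monomial to the normal form of \S\ref{SS: wall invariants}, where only two of the three variables appear. A monomial $t^dx^k$ then arises from $x^{a}y^{b}z^{c}t^{e}$ with $a-c=k$, $b=c$ and $c+e=d$, and the analogous conditions hold for $y^k$ and $z^k$.

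Second, $g$ must be expressed recursively. In the construction of \cite{affinecomplex} and \cite{ICM}, $g$ is fixed by the normalization condition, namely that $\log f$ has no pure $t$-power part, i.e.\ $[t^m]\log f=0$ for all $m>0$. Solving this order by order expresses $g$ as a sum over rooted trees. A non-leaf vertex corresponds to taking a factor of $u^n$, with sign and symmetry factor coming from $\frac{(-1)^{n-1}}{n}$ after a choice of root. Each $g$-insertion is replaced by a subtree whose leaves are balanced, i.e.\ carry equal numbers of $x$, $y$, $z$ labels. Substituting back, $[t^dx^k]\log f$ becomes a sum over rooted trees with leaves labelled $x,y,z$: there are $d+k$ leaves labelled $x$ and $d$ each labelled $y,z$, and no proper subtree is balanced except those produced by $g$. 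These are exactly the trees in $\cT^{\,1}_{d,k}$.

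The weights should then combine as follows. The multinomial coefficients together with $1/n$ reorganize into $1/|\Aut(\Gamma)|$ by the usual orbit--stabilizer count for labelled versus unlabelled rooted trees. The sign $(-1)^{n-1}$ at each vertex, combined with the sign $-1$ coming from solving $[t^m]\log f=0$ for $g$, leaves exactly $(-1)^{|\widehat V_\Gamma|}$.

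The main obstacle is this sign and symmetry bookkeeping, and in particular checking that the recursive definition of $\cT^{\,i}_{d,k}$ in \S\ref{Sec: finite wall formulas} matches the tree sum produced by the normalization. My first attempt will be to prove the identity at the level of generating functions. The tree sum with weights $(-1)^{|\widehat V|}/|\Aut|$ satisfies the functional equation $T=\text{leaf}-\frac{T^2}{2!}+\dots$, that is, a Lagrange-type inversion for $\log(1+\cdot)$. I will compare this with $\log f$ directly rather than track individual coefficients. Once the identity holds, inserting it into the formula from Theorem~\ref{Thm: intro wall closed comparison} and summing over $i$ gives the stated expression for $N_d$.
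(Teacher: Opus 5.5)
Your reduction is the paper's own. It combines Theorem~\ref{Thm: finite wall formula} with the coefficient identity $[t^dx_i^k]\log f=kW_{d,-k\be_i}=\sum_{\Gamma\in\cT^{\,i}_{d,k}}(-1)^{|\widehat V_\Gamma|}/|\Aut(\Gamma)|$, where $(x_1,x_2,x_3)=(x,y,z)$. The paper simply quotes this identity from the exponential tree expansion of \cite[\S5]{ICM}.

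The gap is in your own derivation of the identity, which would not produce $\cT^{\,i}_{d,k}$ as you describe it. First, the logarithmic series $\sum_n(-1)^{n-1}u^n/n$ is a one-level expansion with vertex weight $(-1)^{n-1}/n$. It does not generate nested trees on $x,y,z$-leaves, and these weights do not reorganize into $(-1)^{|\widehat V_\Gamma|}/|\Aut(\Gamma)|$. Already the coefficient $\frac13$ of $x^3$ is a single term of the logarithmic series, whereas in the tree sum it is $-\frac16+\frac12$, coming from the star and the nested tree. Second, balanced subtrees ``produced by $g$'' do not survive. The trees in $\cT^{\,i}_{d,k}$ have no balanced vertex at all, and the role of $g$ is to cancel every tree that has one. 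For $d=k=1$, the unrestricted tree sum is $[x^2yz]\log(1+x+y+z)=-3$. The trees with a balanced vertex contribute $-2$ of this, and the term $-g_1=2$ coming from $g=-2t+\dots$ cancels exactly these, leaving $-1$.

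The bookkeeping that works is a projected fixed-point equation. Put $s=x+y+z$ and $\Lambda=\log f$. Let $\Pi$ be the projection of $\QQ\lfor x,y,z,t\rfor/(xyz-t)\simeq\QQ\lfor x,y,z\rfor$ that kills the monomials $x^ay^bz^c$ with $a=b=c$, i.e.\ the pure powers of $t$. Inverting $e^\Lambda=1+s+g$ gives $\Lambda=s+g-\sum_{n\ge2}\Lambda^n/n!$. Normalization says $\Pi\Lambda=\Lambda$, and moreover $\Pi g=0$ and $\Pi s=s$. Hence
\[
  \Lambda=s-\Pi\Bigl(\sum_{n\ge2}\frac{\Lambda^n}{n!}\Bigr).
\]
This equation determines $\Lambda$ uniquely by induction on the $(x,y,z)$-degree. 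Iterating it yields the sum over rooted trees with weight $-1/n!$ at each vertex with $n$ ordered children, hence weight $(-1)^{|\widehat V_\Gamma|}/|\Aut(\Gamma)|$ on isomorphism classes. Applying $\Pi$ at a vertex removes exactly the trees with $a_v=b_v=c_v$ there. The coefficient of $x_i^{d+k}\prod_{j\ne i}x_j^d=t^dx_i^k$ is then your identity, and $g$ never appears.

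Your closing generating-function idea is this argument without $\Pi$. Without the projection, $T=s-\sum_{n\ge2}T^n/n!$ just gives $T=\log(1+x+y+z)$. The projection carries the entire content of both the balancedness condition and the normalization.
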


This theorem is stated as Corollary~\ref{Cor: finite tree formula} below.
Theorem~\ref{Thm: explicit closed formula} shows that this structural result
provides the same formula that Coates and Iritani previously derived via
Givental's formalism \cite[\S10.2.1]{CoatesIritani}.
\medskip

The paper is organized as follows. After constructing a maximal normal crossings
degeneration of $K_{\PP^2}$ in \S\ref{Sec: Mumford degeneration}, an easy
computation gives the truncated period integral in \S3 for small $\lambda$.
Assuming polynomiality of $(-1)^kW_{d,k}$ in $k$, we obtain a rational
continuation in $\lambda$ of the coefficient of $t^d\log t$. Its limit at
infinity is the negative of the constant term of this polynomial. The period
comparison thus reduces to proving polynomiality and identifying the constant
term.

In \S4 we split the contributing tropical types along the edges adjacent to the
root vertex. The logarithmic gluing formula concentrates all dependence
on $k$ in an invariant of maps to a single rational ruled component $Y_i$, a
third Hirzebruch surface blown up at most twice. The remaining factors are
common to the punctured and closed invariants.

Section~5 treats the invariant associated to this surface by relative virtual
localization for a $\GG_m$-action along its ruling. The argument only requires
an action on $Y_i$ compatible with its induced logarithmic structure, although
our model supplies a global action on a Zariski-open subset for convenience. The
component carrying the outgoing puncture is a degree-$k$ cover of a ruling
fiber. Removing this component leaves a moduli problem independent of $k$. After
extracting the sign $(-1)^k$, the localization contribution of the cover is
polynomial in $k$ by nilpotence of the cotangent class at the attaching marking.
The divisor axiom identifies the constant term with the closed central invariant
weighted by its ruling degree. Summing over distinguished vertices then gives
$dN_d$ by additivity of the degree.

Finally, \S\ref{Sec: finite wall formulas} uses finite differences to express
$N_d$ in terms of finitely many wall invariants. Under the additional
identification with the slab function of \cite{affinecomplex}, this gives the
tree sum and the agreement with Givental's formula.

\begin{acknowledgement}
This paper grew out of a broader joint project with Helge Ruddat \cite{vGRS2}.
We thank him warmly for his contributions to its development, including joint
computations at Oberwolfach, providing a proof of a curious algebraic identity,
and discussions of embeddings in compact Calabi--Yau geometries, period
asymptotics,  tree expansions, and logarithmic Gromov--Witten invariants. We
also thank him for agreeing to split the work, with the present paper devoted to
$K_{\PP^2}$ and the broader treatment of the geometries of \cite{ICM} reserved
for a separate paper. We thank Mark Gross for discussions on several aspects of
this work, and Andrea Brini for explanations related to \cite{Brini}.

Research by B.\,Siebert was partially supported by NSF grant DMS-2401174 and a 
Simons Fellowship.
\end{acknowledgement}

\noindent \textbf{Statement on the use of AI.} The derivation of the expression
for $N_d$ in terms of a period integral had been done by January~2025, without
any use of AI systems. Various insights obtained later during that year with
often confusing responses from a number of systems turned out to be irrelevant.
Gemini~3.1 Pro helped to produce a first draft of \S\ref{Sec: wall types and
splitting} and \S\ref{Sec: Localization on Y_i}. ChatGPT-5.6 and~6 have been
used in the final writing phase. All content is entirely the responsibility of
the human authors.


\begin{conventions}
All Gromov--Witten moduli spaces and invariants in this paper are absolute
except briefly in \eqref{Eqn: closed decomposition}.
\end{conventions}


\section{The maximally unipotent degeneration}
\label{Sec: Mumford degeneration}

Intrinsic mirror symmetry for Calabi--Yau varieties takes as input a maximally
degenerating family \cite{associativity,CanScat}. We first construct a
two-parameter toric model adapted to the enumerative computation and then relate
it to the maximally degenerate local $\PP^2$ model of \cite[\S5]{invitation}.

\subsection{The toric construction}

Let $M=\ZZ^2$. Consider the integral polyhedral decomposition of
$M_\RR=\RR^2$ depicted in Figure~\ref{Fig: toric model}. It consists of two
toric diagrams for $K_{\PP^2}$ inducing polarizations of degrees $3$ and $9$ on
$\PP^2$, translated by $(1,-1)$ against each other. The vertices of the two
triangles are at $(1,1)$, $(1,-2)$, $(-2,1)$, $(4,2)$, $(4,-7)$, $(-5,2)$, the
remaining three vertices are at $(-4,2)$, $(2,2)$, $(3,-6)$.

Then let $\varphi$ be the integral piecewise-affine function for the smaller
toric diagram that is identically zero on the inner triangle and changes slope~1
along cell edges, and let $\psi$ be the analogous function for the larger toric
diagram. Together they provide a two-parameter polarized toric degeneration
\[
\PP_\Xi\lra \AA^2=\Spec\CC[\sigma,\tau]
\]
of toric varieties (Mumford degeneration) with momentum polyhedron
\begin{equation}
\label{Eqn: overgraph Xi}
\Xi= \big\{ (m,a,b)\in M_\RR\times\RR\times\RR\,\big|\, a\ge \varphi(m),\,
b\ge \psi(m)\big\}.
\end{equation}
The two degeneration parameters are the monomials $\sigma=\chi^{(0,0,0,1)}$ and
$\tau=\chi^{(0,0,1,0)}$.

\begin{figure}
\input{TwoParamMumford.tikz}
\caption{Polyhedral decomposition for the two-parameter Mumford degeneration
with indicated curve classes.}
\label{Fig: toric model}
\end{figure}

\begin{proposition}
\label{Prop: Mumford degeneration}
The total space $\PP_\Xi$ is smooth and has trivial canonical bundle. The
composition
\[
  \pi\colon\PP_\Xi\lra\AA^2\lra\AA^1=\Spec\CC[\tau]
\]
is a normal crossings degeneration of non-compact Calabi--Yau threefolds with
general fiber $K_{\PP^2}$. Its central fiber has four irreducible components
$P,D_1,D_2,D_3$ corresponding to the four regions bordered by the solid red
lines. Here $P\simeq\PP^2\times\AA^1$, while each $D_i$ is a toric blow-up of
$\Tot\big(\O_{\PP^1}(-3)\big) \times\AA^1$.

Restricting to $\sigma=0$ induces one-parameter toric degenerations with
parameter $\tau$ for the polarized toric varieties defined by the regions
bordered by dashed lines. The component $Y$ corresponding to the bounded dashed
region is a toric degeneration of $\PP^2$ whose central fiber is the union of
$\PP^2$ and three torically blown-up Hirzebruch surfaces $Y_1,Y_2,Y_3$. Moreover,
$\PP_\Xi$ is the total space of $N_{Y/\PP_\Xi}\simeq\O_Y(-D_\infty)$
with zero section $Y$, where $D_\infty\subset Y$ is the degeneration of the
three coordinate lines given by the boundary of the dashed triangle.
\end{proposition}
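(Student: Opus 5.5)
Everything reduces to the standard dictionary between the momentum polyhedron $\Xi$ and the toric variety $\PP_\Xi$. Cones of the normal fan of $\Xi$ correspond to cells $\tau$ of the common refinement $\mathscr P$ of the two polyhedral decompositions, namely those drawn in Figure~\ref{Fig: toric model}. The proof then proceeds cone by cone, using local models.

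\textbf{Smoothness and trivial canonical bundle.} For each vertex $v$ of $\mathscr P$, the cone of $\Xi$ at $(v,\varphi(v),\psi(v))$ is dual to a four-dimensional cone $C_v$ in $N_\RR\times\RR^2$. Its rays are the primitive vectors $(-n_\sigma,1,1)$, one for each maximal cell $\sigma\ni v$, where $(n_\sigma,\cdot,\cdot)$ is the pair of slopes of $(\varphi,\psi)$ on $\sigma$. Up to the appropriate sign convention, these are the inward normals of the facets of $\Xi$ through that vertex.

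I will check two things for each of the nine listed vertices and for the intersection vertices of the two triangle arrangements:
\begin{enumerate}
\item the cell complex is unimodular at $v$, i.e.\ all triangles are elementary and $\mathscr P$ is a triangulation with primitive edges;
\item the slope changes of $\varphi$ and $\psi$ along the edges at $v$ are independent enough that the rays of $C_v$ form part of a $\ZZ$-basis.
\end{enumerate}
Since all rays have height $1$ in a suitable linear functional $\ell$, namely $\ell(n,a,b)=a$ or $b$ adjusted by the evaluations, $K_{\PP_\Xi}$ is the toric divisor $-\sum D_\rho=\mathrm{div}(\chi^{m_0})$ for a suitable monomial. This gives triviality of the canonical bundle. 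Concretely, the function $\sigma\tau$ or a unit multiple of it is such a monomial.

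\textbf{The degeneration $\pi$.} The fibre $\tau=0$ is the union of the toric divisors whose rays have nonzero $a$-component, i.e.\ the maximal domains of linearity of $\varphi$. These are the four regions cut out by the solid red lines, giving the components $P,D_1,D_2,D_3$, each with multiplicity one since $\varphi$ has slope changes $1$. Smoothness of the total space together with reducedness gives normal crossings. The general fibre is the toric variety of the polyhedron $\{b\ge\psi\}$, which is $K_{\PP^2}$. The identification of the components works as follows:
\begin{itemize}
\item The bounded triangle gives $\PP^2$. The extra $\AA^1$ factor comes from the unbounded $b$-direction in the fibre $\tau=0$, where $\sigma$ is free.
\item Each unbounded region is a strip over an edge of the inner triangle. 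Its polyhedron is that of $\Tot(\O_{\PP^1}(-3))$, subdivided by the dashed lines, which produces the toric blow-ups, times the $\AA^1$ coordinate $\sigma$.
\end{itemize}

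\textbf{The restriction $\sigma=0$ and the line-bundle structure.} Setting $\sigma=0$ picks out the domains of linearity of $\psi$, namely the dashed regions. The bounded dashed triangle gives the degree-$9$ polarized $\PP^2$ degenerating via $\varphi$. Its fibre over $\tau=0$ is the union of the inner $\PP^2$ and the three trapezoid-shaped cells between the triangles. The latter are Hirzebruch surfaces $F_3$: the edge lengths $3$ and $9$ across a height difference of $2$ give twist $3$, and the extra vertices $(-4,2),(2,2),(3,-6)$ give the blow-ups. Finally, $Y$ is the toric divisor of the bounded facet, and the fan of $\PP_\Xi$ is the star of that ray. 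This exhibits $\PP_\Xi$ as the total space of $\O_Y(Y)$, and $\O_Y(Y)=\O_Y(-D_\infty)$ follows from adjunction using $K_{\PP_\Xi}=0$ together with $K_Y=-D_\infty-(\text{components }Y\cap P\ldots)$, computed as a linear equivalence of toric boundary divisors.

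\textbf{Main obstacle.} I expect the hardest step to be the unimodularity check at the intersection points of the two subdivisions, where both $\varphi$ and $\psi$ bend. This is where smoothness could fail, and the choice of the $(1,-1)$ translation is presumably designed to make it work. I would first do this directly by listing the cones.
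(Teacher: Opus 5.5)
\textbf{The gap: the smoothness check uses the wrong fan.} You rightly single out smoothness as the crux, but the fan you plan to test is not the fan of $\PP_\Xi$. The facets of $\Xi$ are indexed by the domains of linearity of $\varphi$ and of $\psi$ \emph{separately}. Their primitive inward normals are $(-p_i,1,0)$ and $(-q_j,0,1)$, so the fan has exactly eight rays.

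The maximal cells of your common refinement $\mathscr P$ do not give rays. There are ten of them, and each gives a two-dimensional cone spanned by one $\varphi$-normal and one $\psi$-normal. The maximal cone at a vertex $v$ is spanned by the normals of all $\varphi$- and $\psi$-domains containing $v$:
\begin{itemize}
\item at $(1,1)$ these are $(0,0,1,0),(-1,0,1,0),(0,-1,1,0),(0,0,0,1)$, whereas your recipe produces only three vectors there;
\item at $(2,2)$ they are $(-1,0,1,0),(0,-1,1,0),(0,0,0,1),(0,-1,0,1)$, of determinant $\pm1$. This is dual to the paper's check that the edge generators of $\Xi$ at the vertex over $(2,2)$ form a lattice basis.
\end{itemize}

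Your criterion (1) is false in this example. $\mathscr P$ is not a triangulation: the cells between the two triangles are a quadrilateral, a pentagon and a hexagon, besides the unbounded cells. Yet $\PP_\Xi$ is smooth, so the check as planned cannot succeed.

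Your later steps actually presuppose the correct fan. With rays $(-n_\sigma,1,1)$, the vector $(0,0,1,1)$ would pair to $2$, and every toric divisor would lie in $V(\tau)$. With the correct rays, $(0,0,1,1)$ pairs to $1$ with all eight, which is the paper's argument for $K_{\PP_\Xi}=0$, and $V(\tau)$ consists of the four $\varphi$-divisors.

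Once the fan is corrected, your remaining steps agree with the paper. Your adjunction argument
$N_{Y|\PP_\Xi}\simeq K_Y\simeq\O_Y(-D_\infty-Y_0)\simeq\O_Y(-D_\infty)$, using $Y_0=\operatorname{div}(\tau|_Y)$, is a valid alternative to the paper's use of $\sigma$ as a fiber-linear function with divisor $Y+p^*D_\infty$.

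\textbf{A smaller inaccuracy: the shape of the $Y_i$.} They are the cells cut out of the region between the triangles by the three red legs, not trapezoids between parallel edges.
\begin{itemize}
\item $Y_2$ has parallel edges of lattice lengths $3$ and $6$ at distance $1$, so it is $\FF_3$ itself. It is not blown up, even though it contains the extra vertices $(-4,2)$ and $(2,2)$.
\item The pentagon $Y_3$ is $\FF_3$ blown up once, and the hexagon $Y_1$ is $\FF_3$ blown up twice.
\end{itemize}
Your data ``$3$ and $9$ at height $2$'' fit only the diagonal side. On the other two sides the same heuristic would give twists $6$ and $2$.
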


\begin{proof}
Smoothness is obvious at the trivalent vertices. At the four-valent vertex of
$\Xi$ projecting to $(2,2)$, the integral generators of the edges form the
lattice basis $(1,0,1,0)$, $(1,1,1,1)$, $(-1,0,0,0)$, $(-1,-1,-1,0)$, and
similarly at the two other four-valent vertices.

For $K_{\PP_\Xi}=0$, observe that $\PP_\Xi$ has eight toric divisors, four
contained in $\{\sigma=0\}$ and four contained in $\{\tau=0\}$, and $\sigma\tau$
vanishes to order one on each of them. Equivalently, $(0,0,1,1)\in M\oplus\ZZ^2$
pairs to $1$ with each primitive generator of the normal fan of $\Xi$.

Inverting $\tau$ gives the toric variety with momentum polyhedron $\RR$ times
the overgraph of $\psi$. Since that overgraph is the standard momentum
polyhedron of $K_{\PP^2}$, the general fiber of $\pi$ is toric $K_{\PP^2}$. The
statements about $\pi^{-1}(0)=V(\tau)$ and $V(\sigma)$ follow by restricting the
polyhedral decomposition to the corresponding regions.

Every vertex of $\Xi$ lies on the facet defining $Y$, so every maximal cone of
the normal fan contains its ray. Projection along this ray gives the line-bundle
projection
\begin{equation}
\label{Eqn: Mumford normal bundle}
  p\colon\PP_\Xi\simeq\operatorname{Tot}(N_{Y|\PP_\Xi})\longrightarrow Y.
\end{equation}
The monomial $\sigma$ is fiber-linear with zero divisor $Y+p^*D_\infty$. It
therefore induces a section of $N_{Y|\PP_\Xi}^\vee$ with zero divisor $D_\infty$,
proving $N_{Y|\PP_\Xi}^\vee\simeq\O_Y(D_\infty)$.
\end{proof}

The blown up Hirzebruch surfaces $Y_i$ have general fibers $f_i$, exceptional
curves $e_i$, and strict transforms $h_i$ of the positive sections. For example,
$Y_1$ is obtained from $\FF_3=\PP_{\PP^1}(\O_{\PP^1}\oplus\O_{\PP^1}(3))$ by
blowing up the two points where the positive toric section meets the fibers over
$0$ and $\infty$.

We endow $\PP_\Xi$ and $Y$ with the divisorial log structures defined by
$V(\tau)=P\cup D_1\cup D_2\cup D_3$ and $Y_0=Y\cap V(\tau)$, respectively. Then
$\tau\colon\PP_\Xi\to\AA^1$ and $Y\to \AA^1$ are log smooth for the toric log
structure on $\AA^1$. The divisor $D_\infty\subset Y$ is not part of the log
structure for $Y$. Since $\tau$ is pulled back from $Y$, we have
$V(\tau)=p^{-1}Y_0$. The differentials of $p$ and its zero section give the
canonical equivariant splitting
\begin{equation}
\label{Eqn: zero section tangent splitting}
  \Theta_{\PP_\Xi}|_Y\simeq\Theta_Y\oplus N_{Y|\PP_\Xi},
  \qquad \Theta_Y=T_Y(-\log Y_0).
\end{equation}

We end the discussion of this Mumford degeneration by noting that $\pi:
\PP_\Xi\to \AA^1$ factors via the simpler, but more singular toric variety
$\PP_{\Xi'}$ with momentum polyhedron $\Xi'\subset M_\RR\times\RR^2$ defined as
$\Xi$ by \eqref{Eqn: overgraph Xi}, but with $\psi$ replaced by
$\psi':=\varphi$. This removes the dashed part in Figure~\ref{Fig: toric model}
and makes $(\varphi,\psi')$ change slope by $(1,1)$ along each edge of the red
graph. As a one-parameter Mumford degeneration, one obtains the subdivision of a
three-dimensional polyhedron in $M_\RR\times\RR$ depicted on the left in
Figure~\ref{Fig: toric degen KP2} below. Continue to denote $\sigma,\tau$ as the
monomials corresponding to the tangent vectors $(0,(0,1))$ and $(0,(1,0))$ in
$M\times\ZZ^2$.

\begin{proposition}
\label{Prop: P_Xi->P_ol Xi}
The degeneration $\pi: \PP_\Xi\to \AA^1$ factors over a proper small toric 
morphism
\[
\PP_\Xi\lra \PP_{\Xi'}
\]
that is an isomorphism away from the curve $Z\cap V(\sigma,\tau)$ where $Z$ is
the singular locus of the central fiber $V(\tau)\subset \PP_{\Xi'}$, the union 
of its six toric curves, three $\PP^1$'s and three $\AA^1$'s.
\end{proposition}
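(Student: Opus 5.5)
We will work throughout in the language of toric varieties from polyhedra. Since $\psi'=\varphi$ and $\psi$ is a convex piecewise-affine function whose domains of linearity refine those of $\varphi$, we have $\Xi\subset\Xi'$ with the same recession cone. The normal fan $\Sigma_\Xi$ therefore refines $\Sigma_{\Xi'}$, and the identity of $M\times\ZZ^2$ induces a proper toric morphism $\PP_\Xi\to\PP_{\Xi'}$. It commutes with the projections to $\AA^2=\Spec\CC[\sigma,\tau]$, because $\sigma,\tau$ correspond to the same lattice vectors on both sides. In particular, $\pi$ factors as claimed.

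The first concrete step is to compare the two fans. The cones of $\Sigma_{\Xi'}$ correspond to faces of $\Xi'$, and hence to pairs of cells of the red decomposition $\mathscr P_\varphi$ together with the way $a,b$ are constrained. Because $\Xi'=\{a\ge\varphi(m),\,b\ge\varphi(m)\}$, a face of $\Xi'$ is determined by a cell $\rho\in\mathscr P_\varphi$ and a subset of $\{a,b\}$ on which equality holds. The faces of $\Xi$ are similarly indexed by pairs $(\rho,\rho')$ with $\rho\in\mathscr P_\varphi$, $\rho'\in\mathscr P_\psi$, and $\rho\cap\rho'\neq\emptyset$. The rays of $\Sigma_\Xi$ are the eight facet normals already used in Proposition~\ref{Prop: Mumford degeneration}. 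The rays of $\Sigma_{\Xi'}$ are the four normals coming from $a\ge\varphi$ and the four from $b\ge\varphi$. I would check that these agree, i.e.\ that the refinement introduces no new rays. This is what makes the morphism small: no exceptional divisors, so the exceptional locus has codimension $\ge2$. Concretely, the four regions cut out by $\psi$ are in bijection with the four regions of $\varphi$, since the dashed triangle is bounded by lines parallel to the red ones. So the facets $b=\psi$ over the four $\psi$-regions have the same inner normals as the facets $b=\varphi$ over the corresponding $\varphi$-regions.

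Next I would locate where the fans differ. A cone of $\Sigma_{\Xi'}$ fails to be a cone of $\Sigma_\Xi$ exactly when the corresponding face of $\Xi'$ meets a locus where the red and dashed structures cross differently. Those are the faces lying over a red edge with both $a=\varphi$ and $b=\varphi$ tight. Such a face is a $2$-dimensional face of the $4$-dimensional $\Xi'$ (edge direction plus recession along the edge), so its cone has dimension $2$ and its orbit closure has dimension $2$. However, its normal cone is four-dimensional with four rays: the two $a$-facets and the two $b$-facets adjacent along the edge. It is therefore the non-simplicial cone over a quadrilateral, and $\Sigma_\Xi$ subdivides it into two simplicial cones. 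The three-dimensional cones spanned by two $a$-rays and one $b$-ray, or vice versa, stay cones of both fans. So the only non-simplicial cones of $\Sigma_{\Xi'}$ are these four-dimensional ones, and the corresponding torus orbits are points or curves. Each red edge gives a curve of $V(\sigma)\cap V(\tau)$ inside $V(\tau)$: three bounded edges give $\PP^1$'s and three unbounded edges give $\AA^1$'s. These are exactly the six components of $Z\cap V(\sigma,\tau)$, since $V(\sigma,\tau)$ is the locus where both inequalities are tight. Away from them the fans agree, so the morphism is an isomorphism off $Z\cap V(\sigma,\tau)$. Each fiber over this curve is the $\PP^1$ arising from a small resolution of a $3$-fold conifold transversal singularity, i.e.\ a quadrilateral cone split by a diagonal. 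This confirms smallness.

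The main obstacle is the combinatorial bookkeeping in the middle step. One has to verify precisely that $\Sigma_\Xi$ adds no rays and subdivides exactly the cones over the six red edges, including the three four-valent vertices at $(-4,2),(2,2),(3,-6)$ and the translated positions of the triangles. I would first do this locally at one bounded red edge and one unbounded red edge, using the lattice basis computed in the proof of Proposition~\ref{Prop: Mumford degeneration}. I would then use the $\ZZ/3$-symmetry of Figure~\ref{Fig: toric model}, up to translation, to conclude the remaining cases.
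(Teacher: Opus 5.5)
Your overall framework is the right one: the two normal fans have the same eight rays, so any induced morphism is small, and the exceptional locus is read off from the cones of $\Sigma_{\Xi'}$ that get subdivided. But the step that carries the proposition is not established, namely that $\Sigma_\Xi$ refines $\Sigma_{\Xi'}$, so that the identity extends to a morphism $\PP_\Xi\to\PP_{\Xi'}$.

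Both premises you invoke are false. The bounded domain of $\psi$ contains the small triangle and meets all three unbounded domains of $\varphi$, so the domains of $\psi$ do not refine those of $\varphi$. And since $\psi=\max(0,x-4,y-2,-x-y-3)\le\varphi=\max(0,x-1,y-1,-x-y-1)$, one has $\Xi'\subset\Xi$, not $\Xi\subset\Xi'$.

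More importantly, neither an inclusion of polyhedra nor a refinement of the induced subdivisions of $M_\RR$ implies refinement of normal fans. If you translate the dashed triangle far in direction $(1,1)$, points of the red ray $D_1\cap D_2$ lie in the dashed $D_3$-domain, and then no morphism exists.

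What is needed is the cone-containment criterion of Lemma~\ref{Lem: toric small modification}. The normal cone of a face of $\Xi$ is generated by the normals $(-p_i,1,0)$, $i\in I$, and $(-q_j,0,1)$, $j\in J$, of the facets through it. It lies in a cone of $\Sigma_{\Xi'}$ if and only if the corresponding facets of $\Xi'$ meet, which is \eqref{Eqn: mixed incidence}. Since all cells are pointed, it suffices to check this at the vertices of the superposition of the red and dashed subdivisions. These include the dashed vertices $(4,2),(4,-7),(-5,2)$ and the crossings $(2,2),(-4,2),(3,-6)$. At each of them the combined slopes are $0$ together with two nonzero slopes that meet at a red vertex. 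Your proposed local check along one bounded and one unbounded red edge never tests these vertex cones.

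The middle step also needs repair.
\begin{itemize}
\item The face of $\Xi'$ over a red edge with $a$ and $b$ both tight is one-dimensional. Its normal cone is therefore the \emph{three}-dimensional cone over the quadrilateral $a_1,a_2,b_2,b_1$, with $a_1+b_2=a_2+b_1$. Its orbit is a curve with transversal conifold singularities. It is neither two- nor four-dimensional, as you claim.
\item No cone of $\Sigma_{\Xi'}$ is spanned by two $a$-rays and one $b$-ray, or vice versa. Such cones are new cones of $\Sigma_\Xi$, for instance the two halves of each quadrilateral cone.
\item You omit the four-dimensional normal cones of $\Xi'$ at the three red vertices. These have six rays (cones over a triangular prism) and are triangulated by $\Sigma_\Xi$ into three simplicial cones. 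Their orbits are the points where the curves meet, and the fibers there are chains such as $f\cup e_2$.
\item The $\ZZ/3$-reduction is unavailable. The large triangle is centered at $(1,-1)$ rather than at the origin, and $Y_1,Y_2,Y_3$ are $\FF_3$ blown up in two, zero and one points, respectively.
\end{itemize}
Once the incidence condition is verified, the paper identifies the exceptional locus directly. It consists of the graph faces of $\Xi$ over the regions where the slopes of $\varphi$ and $\psi$ differ: the three shaded $Y_i$ and the three strips. These map onto the six red-edge curves by the ruling $Y_i\to\PP^1$ and by $\PP^1\times\AA^1\to\AA^1$.
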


\begin{proof}
The functions $\varphi,\psi$ have the same four slopes, with zero slope on the
bounded regions. At every vertex of the superposition in Figure~\ref{Fig: toric
model}, the combined slopes consist of zero and two nonzero slopes that also
meet at a vertex of the red triangle. Since all cells are pointed, this verifies
\eqref{Eqn: mixed incidence} for $\psi_0=\varphi$ and $\psi_1=\psi$.
Lemma~\ref{Lem: toric small modification} therefore gives a proper small
modification $\PP_\Xi\to\PP_{\Xi'}$ over $\AA^2$. The exceptional locus consists
of the surfaces defined by the graph faces over the regions where the slopes of
$\psi_0=\varphi$ and $\psi_1=\psi$ differ, namely the three bounded shaded
regions and the three unbounded strips. Their images are the toric curves
corresponding to the three bounded and three unbounded red edges, forming $Z\cap
V(\sigma,\tau)$. On the bounded surfaces the morphism is the ruling
$Y_i\to\PP^1$, and on the strip surfaces it is the projection
$\PP^1\times\AA^1\to\AA^1$.
\end{proof}

\begin{remark}
\label{Rem: blowing up D_i's}
The morphism $\PP_\Xi\to\PP_{\Xi'}$ can also be obtained by blowing up $D_1$,
then the strict transform of $D_3$, and finally the strict transform of $D_2$.
These blow-ups correspond to successive inward parallel translations of the
supporting hyperplanes of the three toric divisors.
\end{remark}

\begin{lemma}
\label{Lem: toric small modification} Let $M$ be a lattice and let
$\varphi,\psi_0,\psi_1$ be convex integral piecewise affine functions on $M_\RR$
with finitely many domains of linearity. Suppose that $\psi_0,\psi_1$ have the
same sets of slopes. Denote the maximal domains of $\varphi$ by $\sigma_i$ and
those of $\psi_k$ by $\tau_j^k$, labeled so that $\psi_0|_{\tau_j^0}$ and
$\psi_1|_{\tau_j^1}$ have the same slope. Put
\[
  \Xi_k=\big\{(m,a,b)\in M_\RR\times\RR^2\mid
    a\geq\varphi(m),\ b\geq\psi_k(m)\big\},\qquad k=0,1.
\]
The identity on the common torus extends to a morphism
$\PP_{\Xi_1}\to\PP_{\Xi_0}$ if and only if
\begin{equation}
\label{Eqn: mixed incidence}
  \sigma_I\cap\tau_J^1\neq\emptyset
  \quad\Longrightarrow\quad
  \sigma_I\cap\tau_J^0\neq\emptyset
\end{equation}
for all index sets $I,J$, where subscripts denote intersections and the empty
intersection is $M_\RR$. Whenever it exists, this morphism is a proper small
toric modification over $\AA^2$.
\end{lemma}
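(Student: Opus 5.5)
We would argue via the standard toric criterion: a morphism of toric varieties extending the identity on the common torus exists if and only if every cone of the source fan is contained in some cone of the target fan. Both $\PP_{\Xi_0}$ and $\PP_{\Xi_1}$ have the same lattice $M\oplus\ZZ^2$ and torus. It therefore suffices to compare the normal fans $\Sigma_k$ of the polyhedra $\Xi_k$. Since these are polyhedra rather than polytopes, the fans are supported on the dual cone of the recession cone $\{(0,a,b)\mid a,b\ge0\}$. This common support gives the map to $\AA^2=\Spec\CC[\sigma,\tau]$.

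\textbf{Step 1: describe the faces.} Because $\Xi_k$ is the fiber product over $M_\RR$ of the overgraphs of $\varphi$ and $\psi_k$, its faces are exactly the sets
\[
F_{I,J}^k=\{(m,\varphi(m),\psi_k(m))\mid m\in\sigma_I\cap\tau_J^k\},
\]
together with the faces obtained by adding recession directions in $a$ or $b$. The normal cone of $F_{I,J}^k$ is the cone generated by the covectors $(-d\varphi|_{\sigma_i},1,0)$ for $i\in I$ and $(-d\psi_k|_{\tau^k_j},0,1)$ for $j\in J$. When $I$ or $J$ is empty, the corresponding factor is replaced by the coordinate ray, matching the convention that the empty intersection is $M_\RR$.

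The hypothesis that $\psi_0$ and $\psi_1$ have the same slopes under the labeling means that these generators coincide for $k=0$ and $k=1$. Consequently both fans have the same set of rays. Each fan is the collection of cones $C_{I,J}=\operatorname{cone}(\text{gens}_I,\text{gens}_J)$ indexed by exactly those pairs $(I,J)$ with $\sigma_I\cap\tau_J^k\neq\emptyset$.

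\textbf{Step 2: the equivalence.} Both fans are simplicial-type collections spanned by rays from a common set, so the containment $C_{I,J}\subset C_{I',J'}$ holds if and only if $I\subset I'$ and $J\subset J'$. For the forward direction, the ray generators are extremal and linearly independent within any cone of a fan, so they must map into the faces. For the reverse direction, the inclusion is then automatic.

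The condition \eqref{Eqn: mixed incidence} says precisely that every cone of $\Sigma_1$ is a cone of $\Sigma_0$. Conversely, suppose a cone $C_{I,J}$ of $\Sigma_1$ lies in some cone $C_{I',J'}$ of $\Sigma_0$. Then $\sigma_{I'}\cap\tau^0_{J'}\neq\emptyset$, and since $I\subset I'$ and $J\subset J'$, this implies $\sigma_I\cap\tau^0_J\neq\emptyset$. This gives the ``if and only if''.

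The main obstacle is justifying the containment criterion in Step 2 cleanly when cones are not simplicial. One has to argue that a ray of $\Sigma_1$ lying in a cone of $\Sigma_0$ must already be a ray of that cone. This follows because all rays are shared and each cone of $\Sigma_0$ meets the ray set only in its own generators. That last fact holds since each cone is the normal cone of a face, and a ray belongs to it exactly when the corresponding facet contains the face.

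\textbf{Step 3: properness and smallness.} Since $|\Sigma_1|=|\Sigma_0|$ (both equal the dual of the common recession cone), the morphism is proper. Since the two fans have the same rays, no toric divisor is contracted and none is created, so the exceptional locus has codimension at least $2$ and the morphism is small. Compatibility over $\AA^2$ holds because the projection to the $(a,b)$-coordinates is the same map of lattices for both fans.
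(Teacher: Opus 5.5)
Your strategy is the paper's: shared facet normals $(-p_i,1,0)$ and $(-q_j,0,1)$, with $p_i,q_j$ the slopes; facets indexed by $I,J$ meet iff $\sigma_I\cap\tau^k_J\neq\emptyset$; cone containment; equal support; no new rays. However, Step~2 rests on a false description of the fans. A cone of $\Sigma_k$ is the normal cone of a face, and it is generated by the normals of \emph{all} facets through that face. For an arbitrary pair $(I,J)$ with $\sigma_I\cap\tau^k_J\neq\emptyset$, the cone $C_{I,J}$ is in general only contained in such a cone. It is guaranteed to be a face of it only when that cone is simplicial. Likewise, the rays of a cone in a fan need not be linearly independent. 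Nothing in the lemma makes the fans simplicial, and in the paper's application $\PP_\Xi\to\PP_{\Xi'}$ the target fan is not. Since $\psi_0=\varphi$, four facets of $\Xi'$ meet along the face over each bounded red edge, so its normal cone is a cone over a quadrilateral. The two-dimensional cone of $\Sigma_1$ dual to $Y_i$ is spanned by two opposite rays of it, so it is not a cone of $\Sigma_0$. Hence your claim that \eqref{Eqn: mixed incidence} ``says precisely that every cone of $\Sigma_1$ is a cone of $\Sigma_0$'' is false, and it proves too much. Together with $|\Sigma_1|=|\Sigma_0|$ it would force $\Sigma_1=\Sigma_0$, so the morphism would always be an isomorphism. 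What the toric criterion needs, and what the condition actually gives, is containment. If $C\in\Sigma_1$ has facet set $(I,J)$, the condition says the facets $I,J$ of $\Xi_0$ meet in a face $G$. The normal cone of $G$ contains all of their normals, hence contains $C$. For necessity, drop the linear-independence argument and use your ``main obstacle'' paragraph instead; it is correct and is exactly the paper's argument. A common ray lying in a cone of $\Sigma_0$ is a ray of that cone, hence the normal of a facet through the corresponding face, so $I\subseteq I'$ and $J\subseteq J'$.

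Your recession cone is also wrong. Since $\varphi=\max_i(\langle p_i,\cdot\rangle+c_i)$ and similarly for $\psi_k$, the recession cone of $\Xi_k$ is $\{(m,a,b)\mid a\geq\max_i\langle p_i,m\rangle,\ b\geq\max_j\langle q_j,m\rangle\}$. It cannot be $\{(0,a,b)\mid a,b\geq0\}$, because $\Xi_k$ projects onto all of $M_\RR$. The equality $|\Sigma_0|=|\Sigma_1|$ still holds for either of two reasons. First, this cone depends only on the slopes, which are common. Second, as in the paper, Farkas' lemma shows the support of a normal fan is the cone generated by the facet normals, which are the common rays. With these two corrections, your arguments for properness, smallness and compatibility over $\AA^2$ go through as stated.
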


\begin{proof}
Let $p_i,q_j$ denote the slopes on $\sigma_i,\tau_j^k$. The facets of $\Xi_k$
have primitive inward normals $(-p_i,1,0)$ and $(-q_j,0,1)$ and project onto
$\sigma_i$ and $\tau_j^k$, respectively. Thus the facets indexed by $I,J$ meet
if and only if $\sigma_I\cap\tau_J^k\neq\emptyset$. A normal cone is generated
by the normals of its incident facets. Since these normals generate rays of both
fans, a source cone lies in a target cone if and only if the corresponding
target facets meet. Hence cone containment is equivalent to \eqref{Eqn: mixed
incidence}. The two fans have the same support, the cone generated by their
common rays, so the resulting morphism is proper. It is small because no rays
are added and commutes with the two coordinate monomials because it is induced
by the identity on the lattice.
\end{proof}


\subsection{Embedding in a maximal degeneration}
\label{SS: maximal degeneration}

The degeneration $\PP_\Xi\to\AA^1_\tau$ constructed above is adapted to the curve
counting problem, but it is not maximal, since its central fiber has no
zero-dimensional strata. For the intrinsic mirror construction, we instead
identify a Zariski-open neighborhood of $V(\sigma)$, which contains the zero
section of $K_{\PP^2}$ for $\tau\neq0$, with a small resolution of the toric 
degeneration
\[
\ol\pi: \ol X\lra \AA^1_\tau
\]
of $K_{\PP^2}$ in \cite[Expl.\,5.1]{invitation}. Recall from loc.cit.\ that the
central fiber of $\ol\pi$ has six irreducible components, a unique compact one
$\PP^1\times\PP^2$, two copies $P_+$, $P_-$ of $K_{\PP^2}$ and three copies of
$\Tot(\O_{\PP^1\times\PP^1}(-1,-3))$, arranged as indicated in the middle of
Figure~\ref{Fig: toric degen KP2}. The general fiber is isomorphic to
$K_{\PP^2}$ minus the graph of the rational $2$-form with logarithmic poles
along the coordinate lines. In affine coordinates $r,v,z,s,\tau$ centered at a
$0$-stratum, say at $P_+\cap P\cap D_1\cap D_2$ with $r,v,z,s$ respectively
defining $P, D_1,D_2,P_+$, the family is given by
\[
rvzs=(1+s)\tau.
\]
Rewriting this equation as $(rvz-\tau)s=\tau$ shows that $\tau\neq0$ implies
$rvz\neq \tau$; after eliminating $s$, a solution $(r,z,v)$ can be identified
with $r dz\wedge dv$ in the fiber of $K_{\PP^2}$ over $[z,v,1]\in\PP^2$. The
removed section is $r=\frac{\tau}{zv}$. Note also that $s$ is defined on
$\ol X\setminus (P_+\cup P_-)$.

\begin{figure}
\input{MumfordInvitationWeb.tikz}
\caption{Left: The intersection complex for the singular Mumford degeneration 
with $\psi=\varphi$. Middle and right: The intersection complex for the family 
from \cite{invitation} and the affine projection to the web diagram of 
$K_{\PP^2}$.}
\label{Fig: toric degen KP2}
\end{figure}

\begin{proposition}
\label{Prop: Mumford vs invitation model}
The morphism of Proposition~\ref{Prop: P_Xi->P_ol Xi} restricts to a
factorization over $\AA^1_\tau$
\[
\PP_\Xi\setminus V(\sigma-1)\longrightarrow
\PP_{\Xi'}\setminus V(\sigma-1)\stackrel{\sim}{\longrightarrow}
\ol X\setminus(P_+\cup P_-).
\]
Under the second arrow, $1+s$ pulls back to $\sigma$. The composite is
proper and is an isomorphism away from
$\ol X_{\sing}\cap V(1+s)$.
\end{proposition}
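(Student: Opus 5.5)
The plan is to work torically throughout and compare fans, since all three spaces are toric over $\AA^1_\tau$, or open subsets of toric varieties cut out by conditions on torus-invariant data. The first arrow is simply the restriction of the proper small toric morphism $\PP_\Xi\to\PP_{\Xi'}$ of Proposition~\ref{Prop: P_Xi->P_ol Xi}. The morphism commutes with $\sigma$, being induced by the identity on $M\oplus\ZZ^2$. Hence $V(\sigma-1)$ pulls back to $V(\sigma-1)$, and the restriction over the complement remains proper.

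For the second arrow, I would identify $\PP_{\Xi'}\setminus V(\sigma-1)$ with $\ol X\setminus(P_+\cup P_-)$ by matching local charts at the zero-dimensional strata.

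\begin{itemize}
\item The polyhedron $\Xi'$ is defined by $a\ge\varphi(m)$ and $b\ge\varphi(m)$. So $\PP_{\Xi'}$ is the fiber product over $\AA^1$ of the Mumford degeneration of $\varphi$ with itself, up to the change of variables $(a,b)\mapsto(a,b-a)$. Concretely, with $w=\chi^{(0,1,-1)}$, which is regular where both inequalities are tight in the same way, the ring near a vertex is generated by the monomials of the one-parameter Mumford degeneration together with $\sigma$.
\item I would check chart by chart that the vertex chart at $(2,2)$, whose edge generators are $(1,0,1,0)$, $(1,1,1,1)$, $(-1,0,0,0)$, $(-1,-1,-1,0)$, has coordinate ring $\CC[r,v,z,\sigma^{\pm}]$-type with the single relation $rvz\,(\ldots)=\tau$ after localizing at $\sigma\neq 1$.
\item On the other side, on $\ol X\setminus(P_+\cup P_-)$ the function $s$ is regular, and $rvz\,s=(1+s)\tau$. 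Setting $\sigma=1+s$, the locus $V(\sigma-1)=V(s)$ is exactly $P_+\cup P_-$ intersected with the closure. Off it, $s$ is invertible, and the equation becomes $rvz=\sigma\tau/(\sigma-1)$.
\end{itemize}

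The main computation is to show that the toric chart of $\PP_{\Xi'}$ at the corresponding vertex is $\Spec\CC[r,v,z,\sigma,\tau]/(rvz-\sigma\tau\cdot u)$ for a monomial unit $u$ that becomes $(\sigma-1)^{-1}$ after the appropriate non-toric coordinate change. More likely, I would realize the isomorphism by the substitution $\tau\mapsto\tau\sigma/(\sigma-1)$, or equivalently by rescaling one toric coordinate by the unit $(\sigma-1)$, so that the two charts agree. I would then verify that these chart identifications glue. This should follow because the transition functions on both sides are monomials in $r,v,z$ and their analogues, and the rescaling is by the global function $\sigma$. It also gives the statement that $1+s$ pulls back to $\sigma$.

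I expect the main obstacle to be this gluing and chart matching. The difficulty is that the isomorphism is not toric, since it involves $1+s$, so one must check that the single rescaling is compatible at all six zero-strata and along the unbounded directions. I would first try to handle this by using the $S_3$-symmetry of Figure~\ref{Fig: toric degen KP2} to reduce to the one vertex and one edge chart.

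Finally, the composite $\PP_\Xi\setminus V(\sigma-1)\to\ol X\setminus(P_+\cup P_-)$ is proper, as a restriction of a proper map composed with an isomorphism. By Proposition~\ref{Prop: P_Xi->P_ol Xi}, it is an isomorphism away from $Z\cap V(\sigma,\tau)$. Under the identification, $Z$ is the singular locus of the central fiber, which is $\ol X_{\sing}$. Moreover $V(\sigma)=V(1+s)$, so the exceptional image is $\ol X_{\sing}\cap V(1+s)$, as claimed.
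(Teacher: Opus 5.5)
Your strategy is the paper's: obtain the first arrow by base change of the proper morphism of Proposition~\ref{Prop: P_Xi->P_ol Xi} (this part is fine), identify the three affine toric charts of $\PP_{\Xi'}$ with charts of $\ol X$ via $\sigma=1+s$ and a rescaling of one coordinate, and read the exceptional locus off the local equations. However, the one computation the proof consists of is never carried out, and the steps you line up for it would not work.

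\begin{enumerate}
\item The chart you propose to compute belongs to $\Xi$, not $\Xi'$. It sits at the vertex over $(2,2)$, with edge generators $(1,0,1,0)$, $(1,1,1,1)$, $(-1,0,0,0)$, $(-1,-1,-1,0)$. These form a lattice basis, so this is a smooth chart $\AA^4$ of $\PP_\Xi$ with no relation at all. The polyhedron $\Xi'$ has only three vertices, over the corners $(1,1)$, $(1,-2)$, $(-2,1)$ of the red triangle.
\item $\sigma$ must not be inverted, only $\sigma-1$. The exceptional locus and its image lie in $V(\sigma,\tau)$.
\item $\PP_{\Xi'}$ is not a fiber product over $\AA^1$ of the one-parameter Mumford degeneration with itself, which would be five-dimensional.
\item The substitution $\tau\mapsto\tau\sigma/(\sigma-1)$ is not a morphism over $\AA^1_\tau$, and it is not equivalent to rescaling a toric coordinate. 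Only the rescaling works.
\end{enumerate}

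The missing step is short. At the vertex of $\Xi'$ over $(1,1)$, $\varphi$ is locally $\max(0,m_1,m_2)$ in centred coordinates. The lattice points of the tangent cone are generated by $r_0=\chi^{(1,1,1,1)}$, $v=\chi^{(-1,0,0,0)}$, $z=\chi^{(0,-1,0,0)}$, $\sigma$ and $\tau$, with the single relation $r_0vz=\sigma\tau$. So your $u$ is $1$: a tangent cone at a vertex has no nontrivial monomial units. Where $\sigma-1$ is invertible, put $s=\sigma-1$ and $r=r_0/(\sigma-1)$. This gives $rvzs=(1+s)\tau$, with inverse $\sigma=1+s$, $r_0=rs$, and leaves $\tau$ unchanged.

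The gluing you feared is harmless. Only the coordinate defining $P$ is rescaled, by the global function $\sigma-1$ (not $\sigma$). It enters the monomial transition functions to the first power, e.g.\ $r_0'=\chi^{(-2,1,1,1)}=r_0v^3$ at the vertex over $(-2,1)$. This is why cyclic analogues suffice.

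Finally, your last paragraph conflates the singular locus of the central fiber, which is the two-dimensional double locus, with the curve $\ol X_{\sing}$. What you need is that the image $V(\sigma,\tau,r_0v,r_0z,vz)$ of the exceptional locus becomes $V(1+s,\tau,rv,rz,vz)$ under the same substitution. This is the singular locus of $\ol X$ on this chart. Equivalently, $Z\cap V(\sigma,\tau)$ is exactly the singular locus of $\PP_{\Xi'}$, again by $r_0vz=\sigma\tau$, and an isomorphism preserves singular loci.
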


\begin{proof}
Identifying $M=\ZZ^2$ according to Figure~\ref{Fig: toric model}, put
$r_0=\chi^{(1,1,1,1)}$. In the toric affine chart at the zero-stratum
$D_1\cap D_2\cap P\cap V(\sigma)$ of $\PP_{\Xi'}$, we have $r_0vz=\sigma\tau$.
Since $\sigma-1$ is invertible, the substitutions
\[
s=\sigma-1,\qquad r=\frac{r_0}{\sigma-1}
\]
give $rvzs=(1+s)\tau$. Their inverse is $\sigma=1+s$, $r_0=rs$. Cyclic analogues
provide the extensions to the other two affine charts covering $\PP_{\Xi'}$.

The first arrow is obtained from the proper morphism of Proposition~\ref{Prop:
P_Xi->P_ol Xi} by base change. By Proposition~\ref{Prop: P_Xi->P_ol Xi}, the
image of the exceptional locus of the first arrow is locally
$V(\sigma,\tau,r_0v,r_0z,vz)$. Under the stated substitutions this subset
becomes $V(1+s,\tau,rv,rz,vz)$, the singular locus of $\ol X$ on this chart. The
same holds cyclically, proving the last assertion.
\end{proof}

\begin{wrapfigure}[12]{r}{.45\textwidth}
  \vspace*{.5ex}
  \centering \input{P2local.tikz}
  \begin{minipage}{.4\textwidth}
  \caption{The height-one slice of the tropicalization of $X$, a polyhedral
  decomposition of a bipyramid over a triangle into six simplices, and its
  central planar subcomplex.}
  \label{Fig: height-one slice}
  \end{minipage}
\end{wrapfigure}

We finally define the maximally degenerating family with normal crossings 
singularities
\[
\pi: X\lra \AA^1
\]
by gluing $\ol X\setminus \big(\ol X_\sing\cap V(1+s)\big)$ to $\PP_\Xi\setminus
V(\sigma-1)$ via the isomorphism in Proposition~\ref{Prop: Mumford vs invitation
model}. Note that by Remark~\ref{Rem: blowing up D_i's}, $X$ can also be
obtained by blowing up $\ol X$ first in $D_1$, then in the strict transform of
$D_3$, then in the strict transform of $D_2$.

Let
\[
  \rho\colon\Sigma(X)\longrightarrow\Sigma(\AA^1)\hspace{40ex}
\]
be the tropicalization of $\pi$,  put $B=\Sigma(\pi)^{-1}(1)$ and denote by
$B_0\subset B$ the central planar subcomplex as illustrated in Figure~\ref{Fig:
height-one slice}.

\noindent Since every irreducible one-dimensional stratum contained in $P$
deforms away from the log-singular locus into the interior of the toric
component $P$, all change-of-chart morphisms are toric, and $B$ as an affine
manifold is the fan of $\PP^1\times\PP^2$ cut off at integral distance one and
subdivided into six simplices, see Figure~\ref{Fig: height-one slice}. The
splitting argument of Lemma~\ref{Lem: vanishing from splitting} will show that
the tropicalizations of all contributing wall types and rigid closed types are
contained in $B_0$. In particular, $B_0$ contains all nontrivial walls of the
canonical wall structure from \cite{CanScat}.


\subsection{Compact curve classes}

For later use we compute intersection numbers for the curve classes displayed in
Figure~\ref{Fig: toric model}. The class $\ell$ is the class of a line in the
central component $P$. The classes $f_1,f_2,f_3$ are the fiber classes in the
blown up Hirzebruch surfaces $Y_1,Y_2,Y_3$, and $e_1,e_2,e_3$ are the indicated
exceptional classes in reducible fibers. Defining $f= f_2$ and taking into
account the obvious relations stated in Figure~\ref{Fig: toric model}, the monoid
of effective curve classes relevant for the intrinsic mirror computations is
\begin{equation}
\label{Eqn: effective cone}
  \NE\big(X_0\setminus (P_-\cup P_+)\big)= \NE\big(V(\sigma,\tau)\big)=
  \NN\ell+\NN f+\NN e_1+\NN e_3.
\end{equation}
The component $P$ contains many further compact curves not contained in
$V(1+s,\tau)= V(\sigma,\tau)$. The assertion needed below is only that curves
not contained in $V(\sigma,\tau)$ do not occur in contributing wall types or
rigid closed types.  This follows from the splitting argument of Lemma~\ref{Lem:
vanishing from splitting}, rather than from compactness alone.  Consequently,
for the wall structure and the computation of $N_d$, it suffices to consider the
curve classes represented in $V(\sigma,\tau)$.

The following table gathers the intersection numbers between the relevant 
divisors of the central fiber $X_0$ of $X\to \AA^1$ with these curve classes.
\begin{center}
\begin{tabular}{c|rrrr}
\toprule
 & $P$ & $D_1$ & $D_2$ & $D_3$ \\
\midrule
$\ell$ & $-3$ & $1$ & $1$ & $1$ \\
$e_1$  & $0$  & $-1$ & $0$ & $1$ \\
$e_3$  & $0$  & $0$ & $1$ & $-1$ \\
$f$    & $1$  & $0$ & $-1$ & $0$ \\
\bottomrule
\end{tabular}
\end{center}
The sum of every row vanishes, in agreement with the fact that $P+D_1+D_2+D_3$
is a principal divisor in a neighborhood of $s=-1$. Note $P\cdot \ell=-3$
implies $\O_X(P)|_{P\cap V(s+1)}\simeq\O_{\PP^2}(-3)$, as expected.

The positive toric sections of the three $Y_i$ have classes
\begin{equation}
\label{Eqn: positive sections}
  h_1=\ell+3f+e_1+2e_3,
  \qquad
  h_2=\ell+3f,
  \qquad
  h_3=\ell+3f+2e_3.
\end{equation}
The unions
\[
  \widetilde h_1=h_1,
  \qquad
  \widetilde h_2=h_2+e_2+e_3,
  \qquad
  \widetilde h_3=h_3+e_1
\]
represented by the three edges of the dashed triangle in Figure~\ref{Fig: toric
model} are homologous. Their common class is
\begin{equation}
\label{Eqn: specialization of a line}
  \beta=\ell+3f+e_1+2e_3=\ell+f_1+f_2+f_3.
\end{equation}
It is the specialization of the class of a line in the zero section of
$K_{\PP^2}$ in $X_\tau\subset X$ for $\tau\neq 0$.


\section{Renormalized periods over the positive real locus}

\subsection{Intrinsic mirror geometry}
\label{Sec: intrinsic mirror geometry}

As discussed at the end of \S\ref{SS: maximal degeneration}, all nontrivial
canonical walls are contained in $B_0$. We work in the chart at the central
vertex $v_0$, and write $t$ for the curve-degree parameter defined by the
pullback of the hyperplane class on $\PP^2$. The product of the canonical wall
functions\footnote{Peter Zaika \cite{Zaika} has shown that the construction of
\cite{CanScat} remains valid in the case where $X$ is only proper over an affine
variety, as in our situation.} from \cite{CanScat}, interpreted in this chart,
is
\begin{equation}
\label{Eqn: Canonical slab function}
f=\exp\Big(\sum_{d\ge 0,\ m\in\ZZ^2\setminus\{0\}}
  |m|\,W_{d,m}\,t^d\chi^{-m}\Big).
\end{equation}
Here $W_{d,m}$ is the aggregate invariant with outgoing contact order $m$ and
curve degree $d$, and $\chi^{-m}$ is the corresponding tangent theta monomial,
$|m|$ its integral length. We refer to \S\ref{SS: wall invariants} for the
monomial convention, the multiplicities in the aggregation, and the passage from
the individual walls to $f$. We interpret the wall function and its logarithm in
the completed ring $\QQ\llbracket t,x,y,z\rrbracket/(xyz-t)$. For the explicit
formulas in \S\ref{Sec: finite wall formulas}, we work with the expected
expression
\begin{equation}
\label{Eqn: expected slab fct}
f=1+x+y+z+g(t)
\end{equation}
with $g$ an analytic function in $t$ determined by the normalization condition,
as in \cite[Expl.\,5.1]{invitation} and \cite[\S5]{ICM}. Analyticity of $g$ has
been shown in \cite[Cor.\,6.21]{CCLT} and \cite[Thm.\,1.1, Expl.\,7.2]{Lau}. It
also follows directly from a simple analytic equation $g$ fulfills and the
analytic inverse function theorem. For now we only make the following assumption.

\begin{assumption}
\label{Ass: analyticity} The canonical wall function $f$ converges near
$x=y=z=t=0$ and admits a single-valued holomorphic continuation to a
neighbourhood of
\[
  \big\{(x,y,z,t)\in\RR_{>0}^4\,\big|\,xyz=t<\epsilon\big\}\subset \CC^4
\]
for some $\epsilon>0$.
\end{assumption}

Since there is no scattering in $B_0$, $x,y,z$ agree with the
generalized theta functions $\vartheta_{(1,0,0)}$, $\vartheta_{(0,1,0)}$,
$\vartheta_{(-1,-1,0)}$ \cite{GHS}, and they fulfill the toric relation $xyz=t$.
Denoting further by $u,v$ the generalized theta functions
$\vartheta_{(0,0,-1)},\vartheta_{(0,0,1)}$, the expansion of $u$ in terms of
broken lines at a point slightly below $B_0$ gives the mirror
geometry as an analytic family for small $t$:
\begin{equation}
\label{Eqn: intrinsic mirror central chart}
  \check X_t\colon\qquad
  uv=f\cdot t,\qquad xyz=t.
\end{equation}
For fixed $t\neq0$, the second equation gives $z=t/(xy)$, so that we obtain a
family of conics in the $(u,v)$-plane with parameters $x,y\in\CC^*$. The
canonical holomorphic volume form coming with the construction, for the
orientation of $B_0$ shown in Figure~\ref{Fig: height-one slice} and restricted
to the locus $uxy\neq0$ is
\begin{equation}
\label{Eqn: intrinsic mirror volume form}
  \Omega=\frac{du}{u}\wedge\frac{dx}{x}\wedge\frac{dy}{y}
  =\dlog u\wedge \dlog x \wedge \dlog y.
\end{equation}

The calculation below proceeds term by term in $\log f$. We will show
that only terms $W_{d,m}$ with contact orders $m=-k\be_i$, $k\in\ZZ$, and
\begin{equation}
\label{Eqn: e_i}
  \be_1=(1,0),\quad \be_2=(0,1),\quad \be_3=(-1,-1),
\end{equation}
contribute to coefficients of the terms in the period integral with enumerative
meaning. Note the $\be_i$ are the primitive generators of the three edges in
$B_0$ emanating from $v_0$.


\subsection{Period integrals over compact positive real loci}
\label{Sec: positive real locus}

Let $0<\lambda\ll 1$ be such that $f$ converges for $0\le x,y,z,t^{1/3}< \lambda$.
For $0<t\ll 1$, we will compute the integral of the canonical holomorphic volume
form $\Omega$ over the compact subset
\[
  \check X_t^{>0}(\lambda)
  =\big\{(u,v,x,y,z)\in \check X_t^{>0}\,\big|\,
       u,v\leq1,\ x,y,z\leq \lambda\big\}
\]
of the positive real locus $\check X_t^{>0}\subset \check X_t$ as a function of
$\lambda$. The motivation for the cutoff in $u,v$ is that embedding our
degeneration of $\PP^2\subset K_{\PP^2}$ into a degeneration of Calabi--Yau
threefolds, the dependence on that cutoff reflects curve classes not contained
in $\PP^2$. The cutoff in $\lambda$ is necessary because the expansion of $\log
f$ in \eqref{Eqn: Canonical slab function} converges only for small $x,y,z,t$,
and we will eventually extract the enumerative information from a holomorphic
continuation for $\lambda\to\infty$. We refer to \cite{vGRS2} for a discussion of 
this cutoff.

Since $f$ has real coefficients and constant term one, it is positive for
$x,y,z$ and $t$ sufficiently small. Taking $\log$ of \eqref{Eqn: intrinsic
mirror central chart} therefore allows us to eliminate $v$ and to identify the
fibre of the projection to the $(x,y,z)$-coordinates with the interval $\log
u\in \big[\log t+\log f,0\big]$. Working in logarithmic coordinates and
integrating over $\log u$ gives
\begin{equation}
\label{Eqn: First integral}
\begin{aligned}
  \int_{\check X_t^{>0}(\lambda)}\Omega
  &= \int_{\Gamma(\lambda)} \int_{\log t+\log f}^0
    \dlog u\wedge \dlog x \wedge \dlog y\\
  &= -\int_{\Gamma(\lambda)}
    \Big(\log t+\log f\big(x,y,{\textstyle \frac{t}{xy}},t\big)\Big)
    \dlog x\wedge\dlog y.
\end{aligned}
\end{equation}
Here
\[
\Gamma(\lambda)=\big\{(\xi,\eta)\in\RR^2\,\big|\, \log t-2\log \lambda\le 
\xi,\eta,\log t-\xi-\eta\le \log\lambda\big\}
\]
is the triangle-shaped domain of integration after elimination of $z=t/(xy)$ in
logarithmic coordinates.

We are interested in the coefficient of $t^d\log t$ for $d>0$, as a function of
$\lambda$. We interpret the wall invariants $W_{q,m}$ with outgoing contact
order $m$ in an endpoint cell whose tangent cone at $v_0$ contains $-m$ and 
include degree-zero invariants.

\begin{theorem}
\label{Thm: logarithmic period coefficient}
For $d>0$ and $\lambda>0$ sufficiently small, with $\be_i$ as in
\eqref{Eqn: e_i}, we have
\begin{equation}
\label{Eqn: logarithmic period coefficient}
  [t^d\log t]\int_{\check X_t^{>0}(\lambda)}\Omega
  =\sum_{i=1}^3\left(
    \sum_{k>0}\lambda^k W_{d,-k\be_i}
    -\sum_{k=1}^d\lambda^{-k}W_{d-k,k\be_i}
  \right).
\end{equation}
\end{theorem}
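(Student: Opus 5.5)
The plan is to evaluate \eqref{Eqn: First integral} exactly, monomial by monomial in the expansion \eqref{Eqn: Canonical slab function} of $\log f$, and then read off the coefficient of $t^d\log t$. Put $L=\log\lambda$, $\xi=\log x$, $\eta=\log y$, $\zeta=\log t-\xi-\eta$. Then $\Gamma(\lambda)$ is the triangle $\{\xi,\eta,\zeta\le L\}$ in the plane $\xi+\eta+\zeta=\log t$, with vertices where two of the three coordinates equal $L$. For $\lambda$ small the series for $\log f$ converges uniformly on $\Gamma(\lambda)$, since there $x,y,z\le\lambda$ and $t\le\lambda^3$. Hence summation and integration can be exchanged, and each term becomes an explicit function of $t$ and $\log t$.

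\textbf{Step 1.} The term $-\log t$ integrates to $-\log t\cdot\mathrm{Area}(\Gamma(\lambda))$. This area is $\tfrac12(3L-\log t)^2$, which contains no positive power of $t$, so it does not contribute for $d>0$.

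\textbf{Step 2.} A monomial $t^q\chi^{-m}$ is written as $t^qx^ay^bz^c$ with the exponent convention of \S\ref{SS: wall invariants}. On the plane it becomes $t^{q+c}e^{\ell(\xi,\eta)}$ with $\ell$ affine. Integrating an exponential of an affine function over a triangle gives a sum over the vertices of the values $e^{\ell(\text{vertex})}$, divided by products of the differences of $\ell$ along the edges. Every vertex value is a monomial in $t$ and $\lambda$, with no $\log t$. So $\log t$ can only appear in the degenerate case where $\ell$ is constant along an edge.

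For the edge $\xi=L$, which has direction $(0,1,-1)$, this happens exactly when $b=c$. Up to the relation $xyz=t$, such monomials are the pure powers $x^k$ and $(yz)^k=t^kx^{-k}$. These correspond to contact orders $m=-k\be_1$ and $m=k\be_1$ respectively, and cyclically for $\be_2,\be_3$. Every other monomial is non-degenerate on all three edges and contributes no $\log t$ term. I would first isolate this classification as a small lemma.

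\textbf{Step 3.} Compute the degenerate integrals by Fubini along the relevant edge direction. For $x^k$ the fibre over $\xi$ has length $2L+\xi-\log t$, so
\[
  \int_{\Gamma(\lambda)} e^{k\xi}\,d\xi\, d\eta=\int_{\log t-2L}^{L}e^{k\xi}\,(2L+\xi-\log t)\,d\xi .
\]
Its $\log t$-coefficient is $-\tfrac1k\big(\lambda^k-t^k\lambda^{-2k}\big)$, because the boundary term at the lower limit carries no $\log t$. Multiplying by the prefactor $-|m|W_{q,m}=-kW_{q,m}$ from \eqref{Eqn: Canonical slab function} and \eqref{Eqn: First integral} yields $W_{q,-k\be_1}t^q\big(\lambda^k-t^k\lambda^{-2k}\big)$.

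The analogous computation for $(yz)^k$ uses the variable $\zeta+\eta=\log t-\xi$ along the same edge. It yields terms of the shape $W_{q,k\be_1}t^{q}\lambda^{-k}$ together with $t$-shifted $\lambda^{-2k}$ pieces. The claim to verify is that the $\lambda^{-2k}$ pieces from the two families cancel, or are absorbed by the convention of reading $W_{q,m}$ in an endpoint cell containing $-m$. After this, extracting $[t^d]$ leaves exactly $W_{d,-k\be_i}\lambda^k-W_{d-k,k\be_i}\lambda^{-k}$, and the range $1\le k\le d$ comes from nonnegativity of the $t$-exponent. Summing over $i$ by cyclic symmetry gives \eqref{Eqn: logarithmic period coefficient}.

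\textbf{Main obstacle.} I expect the main obstacle to be this bookkeeping in Step 3: the correct identification of $\chi^{-m}$ with $x,y,z$-monomials for both signs of $k$, including degree-zero invariants, and the resulting cancellation of the $\lambda^{-2k}$ terms. I would first check it on the constant and linear terms of $f=1+x+y+z+\dots$ before doing the general case.
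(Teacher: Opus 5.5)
Your overall strategy is the paper's: integrate $\log f$ termwise over $\Gamma(\lambda)$, observe that only monomials constant along an edge of $\Gamma(\lambda)$ (up to $xyz=t$, the powers $x^{\pm k},y^{\pm k},z^{\pm k}$) can produce $\log t$, do one Fubini computation, and use cyclic symmetry. However, Step~3 contains a computational error that creates an obstacle which does not exist, and your proposed remedy for it would not work. Write the integral with its antiderivative:
\[
  \int_{\log t-2L}^{L}e^{k\xi}(2L+\xi-\log t)\,d\xi
  =\Big[\tfrac{e^{k\xi}}{k}(2L+\xi-\log t)-\tfrac{e^{k\xi}}{k^2}\Big]_{\log t-2L}^{L}.
\]
The factor $2L+\xi-\log t$ vanishes at the lower limit. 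So the lower limit contributes only $t^k\lambda^{-2k}/k^2$, which contains no $\log t$. The $\log t$-coefficient is therefore exactly $-\lambda^k/k$, not $-\frac1k(\lambda^k-t^k\lambda^{-2k})$. Your own justification, that the lower boundary term carries no $\log t$, actually gives this. Consequently no $\lambda^{-2k}$ pieces arise, and there is nothing to cancel. The cancellation you planned to verify is not a real mechanism in any case. Such pieces would carry $W_{q,-k\be_1}$ on one side and different invariants $W_{q',k\be_1}$ on the other, and no identity between them forces cancellation. The endpoint-cell convention only specifies which $W_{q,m}$ is meant; it absorbs nothing.

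The correct bookkeeping is short. Since $(yz)^k=t^kx^{-k}$, the same antiderivative applies with $k$ replaced by $-k$; it is valid for all $k\neq0$. The $\log t$-coefficient of $\int_{\Gamma(\lambda)}t^{q+k}x^{-k}\,\dlog x\wedge\dlog y$ is then $t^{q+k}\lambda^{-k}/k$. Multiply by the prefactors $-kW_{q,-k\be_1}$ and $-kW_{q,k\be_1}$ coming from \eqref{Eqn: First integral} and \eqref{Eqn: Canonical slab function}. This gives $W_{q,-k\be_1}t^q\lambda^k$ and $-W_{q,k\be_1}t^{q+k}\lambda^{-k}$. Note the exponent $t^{q+k}$ here, not $t^q$ as you wrote. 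Extracting $[t^d]$ with $q=d$, respectively $q=d-k\geq0$, yields exactly the two sums in \eqref{Eqn: logarithmic period coefficient}.

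A smaller point concerns Step~2. The vertex formula for $\int e^{\ell}$ over a triangle carries the factor $2\,\mathrm{Area}(\Gamma(\lambda))=(3L-\log t)^2$. The edge differences of $\ell$ are $a(3L-\log t)$, $b(3L-\log t)$ and $(a-b)(3L-\log t)$, for the monomial $x^ay^b$ after eliminating $z$. Only after these factors cancel is the non-degenerate contribution visibly free of $\log t$. As you phrased it, with division by edge differences but no area factor, $\log t$ remains in the denominators. The paper sidesteps this by writing down the explicit result of the integration.
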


\begin{proof}
Convergence of $\log f$ near the origin permits termwise integration. Monomials
$x^a y^b$ in the expansion of $\log f$ in \eqref{Eqn: Canonical slab function}
with $ab(a-b)\neq0$ do not contribute to $t^d\log t$:
\begin{multline*}
  \int_{\Gamma(\lambda)}t^r x^ay^b\,\dlog x\wedge\dlog y
  =t^r\int_{t/\lambda^2}^{\lambda}\int_{t/(\lambda x)}^{\lambda}
    x^{a-1}y^{b-1}\,dy\,dx\\
  =\frac{t^r}{b}\int_{t/\lambda^2}^{\lambda}
    \left(\lambda^bx^{a-1}
      -t^b\lambda^{-b}x^{a-b-1}\right)\,dx
  =\frac{t^r\lambda^{a+b}}{ab}
    +\frac{t^{r+a}\lambda^{b-2a}}{a(a-b)}
    -\frac{t^{r+b}\lambda^{a-2b}}{b(a-b)}.
\end{multline*}
The remaining monomials can be written $t^r x^k$, $t^r y^k$,
$t^r z^k=t^{r+k}(xy)^{-k}$ with $k\neq0$. By cyclic invariance of the domain and
the logarithmic measure it suffices to compute
\begin{align*}
  \int_{\Gamma(\lambda)}t^r x^k\,\dlog x\wedge\dlog y
  &=t^r\left[\frac{x^k}{k}
    \left(\log x+2\log\lambda-\log t-\frac1k\right)
    \right]_{t/\lambda^2}^{\lambda}\\
  &=-\frac{t^r\lambda^k}{k}\log t
    +t^r\lambda^k\left(\frac{3\log\lambda}{k}-\frac1{k^2}\right)
    +\frac{t^{r+k}\lambda^{-2k}}{k^2}.
\end{align*}
At the lower endpoint, $\log x+2\log\lambda-\log t=0$, leaving the last term
in the displayed expression.

For $k>0$, tropicalizations of wall type with contact orders $-k\be_1$ and
$k\be_1$, taken with endpoints in cells whose tangent cones at the central
vertex contain $\be_1$ and $-\be_1$, respectively, contribute the terms
\[
  \lambda^k W_{d,-k\be_1}\,t^d\log t,\qquad
  -\lambda^{-k}W_{d,k\be_1}\,t^{d+k}\log t.
\]
The case $a=b=0$ is excluded from $\log f$ by the definition of canonical wall
functions, while the remaining term in \eqref{Eqn: First integral} is
\[
  -\int_{\Gamma(\lambda)}\log t\,\dlog x\wedge\dlog y
  =-\frac12(\log t)^3+3\log\lambda\,(\log t)^2
    -\frac92(\log\lambda)^2\log t.
\]
Extracting the coefficient of $t^d\log t$ for $d>0$, with the integral length
$|k\be_i|=k$ and the minus sign in \eqref{Eqn: First integral}, gives
\eqref{Eqn: logarithmic period coefficient}.
\end{proof}

\begin{remark}
Redoing this computation with the non-normalized naive mirror function $1+x+y+z$
leads to a term $\log\lambda\cdot t^d\log t$ from $a=b=0$ for all $d>0$. With
this term, the asymptotic limit in the following \S\ref{Sec: asymptotic limit}
can still be done, but requires a much more complicated reasoning involving a
Mellin-Barnes integral and subsequent reparametrization by the mirror map to
obtain the Gromov--Witten potential.
\end{remark}


\subsection{Asymptotic limit}
\label{Sec: asymptotic limit}

For $0<\lambda\ll1$, define the degree-$d$ wall generating function
\[
  W_d(\lambda)
  :=\sum_{i=1}^3\sum_{k>0}W_{d,-k\be_i}\lambda^k.
\]
Theorem~\ref{Thm: logarithmic period coefficient} then reads
\[
  [t^d\log t]\int_{\check X_t^{>0}(\lambda)}\Omega
  =W_d(\lambda) -\sum_{i=1}^3\sum_{k=1}^d\lambda^{-k}W_{d-k,k\be_i}.
\]
The following assumption, proved in Corollary~\ref{Cor: wall versus closed
comparison} below, isolates the input needed for the asymptotic limit.

\begin{assumption}[Polynomiality]
\label{Ass: wall polynomiality}
For every $d>0$ and $i=1,2,3$, there is a polynomial $P_{d,i}(s)\in\QQ[s]$ such
that
\[
  P_{d,i}(k)=(-1)^kW_{d,-k\be_i},\qquad k\in\ZZ_{>0}.
\]
\end{assumption}

\begin{proposition}
\label{Prop: asymptotic period}
Under Assumption~\ref{Ass: wall polynomiality}, for every $d>0$ the coefficient
in \eqref{Eqn: logarithmic period coefficient} extends to a rational function of
$\lambda$, with possible finite poles only at $0$ and $-1$, and
\[
  \lim_{\lambda\to\infty}
  [t^d\log t]\int_{\check X_t^{>0}(\lambda)}\Omega
  =\lim_{\lambda\to\infty}W_d(\lambda)
  =-\sum_{i=1}^3P_{d,i}(0).
\]
\end{proposition}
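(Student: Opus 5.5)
The approach is a direct computation with the generating series of a polynomial sequence with alternating signs. By Assumption~\ref{Ass: wall polynomiality}, for each $i$ we can write $W_{d,-k\be_i}=(-1)^kP_{d,i}(k)$. The $i$-th summand of $W_d(\lambda)$ is therefore $\sum_{k>0}P_{d,i}(k)(-\lambda)^k$. The key identity is that for any polynomial $P$ of degree $n$, the series $S_P(\mu)=\sum_{k\ge0}P(k)\mu^k$ is a rational function of the form $Q(\mu)/(1-\mu)^{n+1}$ with $\deg Q\le n$.

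To see this, expand $P$ in the binomial basis $\binom{k+j}{j}$, $j=0,\dots,n$, and use
\[
\sum_{k\ge0}\binom{k+j}{j}\mu^k=(1-\mu)^{-j-1}.
\]
Alternatively, apply $(\mu\,d/d\mu)^j$ to $1/(1-\mu)$. Each term $(1-\mu)^{-j-1}$ tends to $0$ as $\mu\to\infty$. Hence $S_P(\mu)\to0$ as $\mu\to\infty$, and the only pole is at $\mu=1$.

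Now set $\mu=-\lambda$. Then
\[
\sum_{k>0}P_{d,i}(k)(-\lambda)^k=S_{P_{d,i}}(-\lambda)-P_{d,i}(0).
\]
This is rational in $\lambda$ with its only pole at $\lambda=-1$. It tends to $-P_{d,i}(0)$ as $\lambda\to\infty$, and it agrees with the convergent power series for small $\lambda$. Summing over $i$ gives the rational continuation of $W_d(\lambda)$ and
\[
\lim_{\lambda\to\infty}W_d(\lambda)=-\sum_i P_{d,i}(0).
\]

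The remaining term in Theorem~\ref{Thm: logarithmic period coefficient} is $\sum_{i=1}^3\sum_{k=1}^d\lambda^{-k}W_{d-k,k\be_i}$. It is a finite Laurent polynomial in $\lambda^{-1}$, so its only pole is at $\lambda=0$ and it tends to $0$ as $\lambda\to\infty$. The degree-zero invariants $W_{0,k\be_i}$ appearing at $k=d$ are just constants, so they cause no difficulty. Adding the two parts, the coefficient $[t^d\log t]\int\Omega$ extends rationally with poles only at $0$ and $-1$, and its limit equals $\lim W_d(\lambda)$, which proves the claim.

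The only step needing care, mild as it is, is justifying that the rational function obtained is the continuation of the coefficient defined for small $\lambda$. This holds because the power series of $S_P(-\lambda)$ converges on $|\lambda|<1$ and agrees termwise there. One should also note that the formula of Theorem~\ref{Thm: logarithmic period coefficient} is valid for $0<\lambda\ll1$, which lies inside this disc, so the continuation is unique.
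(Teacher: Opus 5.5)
Your proof is correct and follows essentially the same route as the paper: both expand $P_{d,i}$ in a binomial basis and sum the resulting series into rational functions with denominators powers of $1+\lambda$ that vanish as $\lambda\to\infty$, so that only $-P_{d,i}(0)$ from the omitted $k=0$ term survives in the limit. The only cosmetic difference is that the paper uses the Newton basis $\binom{k}{j}$, with $\sum_{k\geq0}\binom{k}{j}(-\lambda)^k=(-\lambda)^j/(1+\lambda)^{j+1}$, whereas you use $\binom{k+j}{j}$, whose series is $(1+\lambda)^{-j-1}$.
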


\begin{proof}
Write $\Delta P(k)=P(k+1)-P(k)$. The Newton expansion and the binomial
generating series give
\[
  P_{d,i}(k)=\sum_{j\geq0}\binom{k}{j}\Delta^jP_{d,i}(0),\qquad
  \sum_{k\geq0}\binom{k}{j}(-\lambda)^k
  =\frac{(-\lambda)^j}{(1+\lambda)^{j+1}}.
\]
The first sum is finite since $P_{d,i}$ is a polynomial. Hence
\[
  \sum_{k>0}\lambda^kW_{d,-k\be_i}
  =\sum_{j\geq0}\Delta^jP_{d,i}(0)
    \frac{(-\lambda)^j}{(1+\lambda)^{j+1}}-P_{d,i}(0).
\]
The right-hand side gives the asserted rational continuation. Every term in the
last sum tends to zero as $\lambda\to\infty$, as does the finite Laurent
polynomial $\sum_{k=1}^d\lambda^{-k}W_{d-k,k\be_i}$ in \eqref{Eqn: logarithmic
period coefficient}. The stated limit follows.
\end{proof}

Proposition~\ref{Prop: asymptotic period} gives the remarkable result that the
limit $\lim_{\lambda\to\infty} W_d(\lambda)$ captures the constant term of the
polynomial with values $(-1)^{k+1} W_{d,k}$ for $k>0$, thus gives the proper
definition of the meaningless $W_{d,0}$. It is also important to distinguish
this algebraic extension from the logarithmic Gromov--Witten
invariants with one point of contact order $0$ that can be shown to appear as 
coefficients of $g(t)$, cf.\ \cite{Lau,vGRS2}. Verification of
Assumption~\ref{Ass: wall polynomiality} and the identification of these
constant terms with closed invariants are deferred to \S\ref{SS: polynomiality
and assembly}.


\section{Wall types, closed types, and logarithmic splitting}
\label{Sec: wall types and splitting}

\subsection{Wall invariants and closed invariants}
\label{SS: wall invariants}

We use the degeneration and tropicalization of \S\ref{SS: maximal degeneration}
and follow \cite[\S3.2]{CanScat} for the definition of wall invariants. We
assume familiarity with \cite[\S2]{CanScat} which gives a short recap of
punctured Gromov--Witten invariants and the related tropical geometry.

Let $H$ be the pullback of the hyperplane class on $\PP^2$, and put $d(A)=H\cdot
A$ for a compact curve class $A$. Note that for a curve $C\subset
V(\sigma,\tau)$ in the Mumford model $\PP_\Xi$ from Figure~\ref{Fig: toric model},
$d([C])$ can be computed by the intersection of $C$ with (the lift to cohomology
of) the class $\beta$ in \eqref{Eqn: specialization of a line}, the class of a
degeneration of a family of lines in the zero section of $K_{\PP^2}$. This also
immediately shows $d(e_i)=d(f_i)=0$ for the exceptional and fiber curve classes
and $d(h_i)=1$.

As in \cite{CanScat} a decorated tropical type is written $\btau=(\tau,\bA)$,
with $\tau=(G,\bsigma,\mathbf u)$. Here $G$ is the dual graph, $\bsigma$ assigns
cones of $\Sigma$ to its vertices, edges, and legs, $\mathbf u$ records the
contact orders, and $\bA(v)$ is an effective curve class on the stratum
$X_{\bsigma(v)}$. All vertex genera are zero and are suppressed in the notation.
We also write $\tau$ for the cone parametrizing tropical maps of this type and
\[
  A=\sum_{v\in V(G)}\bA(v)
\]
for the total curve class, with the summands pushed forward to $X$. Let
$\scrM(X,\btau)$ be the moduli space of basic stable punctured maps to $X$
marked by $\btau$, as in \cite{ACGS2}. Because $X$ is not compact, we need a more 
detailed analysis to check compactness of the relevant $\scrM(X,\btau)$.

\begin{lemma}
\label{Lem: support of maps with at most one puncture}
The image of a genus-zero stable punctured map to $X_0$ with at most one marked
point is contained in $V(s+1,\tau)$. If its total curve class is not a positive
multiple of one of the exceptional classes $e_1,e_2,e_3$, its image is contained
in the projective surface
\[
  \big(P\cap V(s+1)\big)\cup Y_1\cup Y_2\cup Y_3.
\]
For every fixed decorated type $\btau$ satisfying this additional condition,
$\scrM(X,\btau)$ is proper.
\end{lemma}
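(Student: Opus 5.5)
The plan is to use the function $s$ and the maximum principle on each irreducible component of the domain, then reduce compactness to properness of the target. The starting point is that $s$ is a regular function on $X\setminus(P_+\cup P_-)$. First we rule out components mapping into $P_\pm$. These are copies of $K_{\PP^2}$, whose only compact curves lie in the zero section. A genus-zero punctured map with at most one marked point has no unbounded legs to absorb contact orders. The balancing condition at a vertex mapping to the tropical cell of $P_\pm$, together with the fact that the tropicalization is rigid with at most one leg, should force such vertices to be absent. Concretely, consider a leaf component of the domain tree lying in $P_\pm$. It meets the rest of the curve at a single node, and its contact order with $P_\pm\cap(\text{rest of }X_0)$ has to be balanced by $\bA(v)$. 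The normal degrees of the zero section $\PP^2$ in the relevant boundary divisors have a sign that forbids this unless $\bA(v)=0$, which is unstable.

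With all components in $X_0\setminus(P_+\cup P_-)$, the function $s$ restricts to a regular function on every proper component of the domain curve, hence is constant on it. Connectedness makes $s$ constant on the whole image, say $s\equiv c$. Now use the gluing description: for $c\neq -1$ we are in $\ol X$ near a point where $\sigma=1+s\neq0$. There the central fiber of $\ol X\setminus V(1+s)$ is a union of components of type $\AA^1$-bundles over toric strata, with the $\AA^1$ direction given by $s$. A compact curve in a level set $s=c$ with $c\ne -1$ lies in $\PP^1\times\PP^2\cap\{s=c\}$ or similar. Using the equation $rvzs=(1+s)\tau$, every stratum at fixed $s\neq -1$ is a toric variety whose fan is the web diagram of Figure~\ref{Fig: toric degen KP2}, projected along $s$. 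The key check is that the relevant tropical balancing, with at most one leg and the tropical cone being the projection along the $s$-direction, forces the tropical map to degenerate. A curve at level $c\ne-1$ is rigid only if it moves in the $s$ direction; the $\GG_m$-type translation in $s$ would give a family, which is irrelevant for the support claim itself. So I would argue instead by the $s$-projection of the tropicalization. The cone of $\Sigma(X)$ containing the image splits off the ray dual to $V(1+s)$, and the single-leg balancing forces the tropical image into the cells over $V(\sigma)$, i.e.\ $V(s+1)$. I expect this step to be the main obstacle. What I would try first is to show that every vertex maps into $B_0$ by a convexity argument on the height function along the direction transverse to $B_0$ in Figure~\ref{Fig: height-one slice}.

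Granting $s\equiv-1$, the image lies in $V(\sigma,\tau)\subset\PP_\Xi$. This is $(P\cap V(s+1))\cup Y_1\cup Y_2\cup Y_3$ together with the three unbounded strip surfaces $\PP^1\times\AA^1$ of Proposition~\ref{Prop: P_Xi->P_ol Xi}. In a strip, the compact curves are fibers of $\PP^1\times\AA^1\to\AA^1$, of classes multiple of $e_i$. A connected curve touching a strip and not contained in a single strip fiber would have to meet a strip fiber and leave it within the projective part. But the strip fibers meet the projective part only at points of $e_i$. A component in a strip is a fiber, and its class adds a multiple of $e_i$. I would then check, via the intersection table, that any curve with nonzero class outside $\NN e_i$ meeting a strip fiber forces balancing violations at the divisor $D_i$. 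This uses $D_j\cdot e_i$ from the table, with the $P$-column being zero. Excluding classes in $\NN e_i$ leaves the image in the projective surface. Finally, properness of $\scrM(X,\btau)$ follows because the maps factor through a fixed projective subscheme, so $\scrM(X,\btau)$ is a closed substack of the moduli over a proper target with bounded class. Then the properness of punctured moduli of \cite{ACGS2} applies.
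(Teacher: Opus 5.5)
There is a genuine gap. Your skeleton matches the paper's: make the $s$-coordinate constant, force it to be $-1$, then peel off the strips. But the decisive step, excluding $s\equiv c\neq-1$, is missing, and the tool you propose for it cannot supply it.

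$V(1+s)$ is not part of the log structure, so no ray of $\Sigma(X)$ is attached to it. The $\PP^2$-fibers of $\ol P\simeq\PP^1\times\PP^2$ are not strata, and the tropical data of a map into $\{c\}\times\PP^2$ do not depend on $c\neq0,\infty$. Even vector balancing is vacuous for such a map, since $u_\ell=0$. Hence no height or convexity argument on $\Sigma(X)$ can force $c=-1$.

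What is needed is the degree--contact relation for each $D_i$ separately. For $c\neq-1$ the resolution $\nu\colon X\to\ol X$ is an isomorphism over $\{c\}\times\PP^2$. Every compact curve of $\ol X_0$ lies in $\ol P$, because the bundles describing $P_\pm$ and the $\ol D_i$ are negative. So the map lands in this fiber with class $d\ell$, $d>0$. Then $d\ell\cdot D_i=d>0$ for $i=1,2,3$. Summing the degree--contact relation over all vertices forces the (at most one) marked point to have positive contact with all three $D_i$, i.e.\ to map into $P\cap D_1\cap D_2\cap D_3=\emptyset$. At $c=-1$ this argument breaks down precisely because curves in the $Y_i$ can absorb these contacts.

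Your maximum-principle step also has a hole. Excluding components \emph{contained} in $P_\pm$ does not make $s$ regular on the domain. A component of $\ol P$ of positive degree over the $\PP^1$-factor meets both $P_+$ and $P_-$ without lying in either, and $s$ has a pole on it. Your leaf argument also says nothing about non-leaf components in $P_\pm$.

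The fix is global:
\begin{itemize}
\item By negativity and connectedness, $\nu\circ f(C)$ is either a point outside $\ol P$ or contained in $\ol P$. In the first case $f(C)$ lies in a single strip fiber of class $e_i$, so no balancing argument is needed for the strips.
\item In the second case put $a=f_*[C]\cdot P_+=f_*[C]\cdot P_-\geq0$, the degree over the $\PP^1$-factor. If $a>0$, the unique marking would have to lie on both of the disjoint divisors $P_+$ and $P_-$. Hence $a=0$, and the $\PP^1$-coordinate is constant, equal to some $s_0$.
\item Components inside $P_\pm$ are then just the cases $s_0\in\{0,\infty\}$ of the $D_i$-argument above and need no separate treatment.
\end{itemize}

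Your final properness step agrees with the paper.
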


\begin{proof}
Let $f:C^\circ\to X_0$ be such a map, let $\nu:X\to\ol X$ be the resolution, and
put $\ol f=\nu\circ f$ and $\ol P=\nu(P)$. Stability forces $f$ to be
nonconstant. Every compact curve in $\ol X_0$ lies in $\ol P$ since the bundles
describing $P_\pm$ and the $\ol D_i$ in \S\ref{SS: maximal degeneration} are
negative, so their proper curves lie in their zero sections, all contained in
$\ol P$. By connectedness, $\ol f(C)$ is therefore contained in $\ol P$ or is a
point outside it.

In the latter case, $f(C)$ lies in a nontrivial fiber of $\nu$. Outside $\ol P$
these are the $\PP^1$-fibers of the unbounded strips in Figure~\ref{Fig: toric
model}, of classes $e_i$, and lie in $V(s+1,\tau)$. In particular, their total
class is a positive multiple of an $e_i$.

It remains to treat the case $\ol f(C)\subset\ol P\simeq\PP^1\times\PP^2$. Then
\[
  a:=\deg(\operatorname{pr}_1\circ\ol f)^*\O_{\PP^1}(1)
  =f_*[C]\cdot P_+=f_*[C]\cdot P_-\geq0.
\]
If $a>0$, the degree--contact relation \cite[Cor.\,2.30]{ACGS2} requires a marked
point mapping to each of $P_+$ and $P_-$. Since these divisors are disjoint,
this contradicts the assumption on the markings. Hence $a=0$, and connectedness
makes $\operatorname{pr}_1\circ\ol f$ constant. Denote its value by
$s_0\in\PP^1$, including the two end values.

If $s_0\neq-1$, the resolution is an isomorphism over the corresponding
$\PP^2$-fiber, so $f$ has image in this fiber and class $d\ell$, with $d>0$. Its
intersection with each $D_i$ is $d$. The degree--contact relation would
therefore require a marked point whose image lies in $P\cap D_1\cap D_2\cap
D_3=\emptyset$. Thus $s_0=-1$ and $f$ has image in $\nu^{-1}\big(\ol 
P\cap V(s+1)\big)$, the stated reducible surface.

For a fixed decorated type satisfying this condition, properness follows from
this projective support by the boundedness and stable reduction arguments of
\cite[Cor.\,3.19]{ACGS2}.
\end{proof}

For a wall type, $G$ is a tree with a single leg $L_{\mathrm{out}}$ of nonzero
contact order $u_\tau=\mathbf u(L_{\mathrm{out}})$, and $\tau$ is realizable and
balanced in the sense of \cite[Def.\,3.6]{CanScat}. If $h$ is the universal
tropical map and $\tau_{\mathrm{out}}$ is the cone in its domain corresponding
to $L_{\mathrm{out}}$, the dimension conditions are
\[
  \dim\tau=2,\qquad \dim h(\tau_{\mathrm{out}})=3.
\]
The wall support is $\mathfrak p_\tau=h(\tau_{\mathrm{out}})\cap B$. On the
slice $B$, a wall type therefore gives a one-dimensional family of tropical
curves whose outgoing legs sweep out a two-dimensional subset.

For wall types, properness follows from Lemma~\ref{Lem: support of maps with at
most one puncture}. Indeed, pure exceptional classes have outgoing-leg images in
boundary edges of $B_0$, contrary to the wall-type dimension condition. The
virtual class has dimension zero by \cite[Lem.\,3.9]{CanScat}. Following
\cite[Constr.\,3.13]{CanScat}, define
\begin{equation}
\label{Eqn: decorated wall invariant}
  W_{\btau}
  =\frac{1}{|\Aut(\btau)|}
    \deg[\scrM(X,\btau)]^{\virt}.
\end{equation}
Here $\Aut(\btau)$ is the automorphism group of the decorated graph.
Automorphisms of the $\btau$-marked stable punctured maps are already accounted
for in the degree of the virtual class.

The multiplicity entering the wall function is the lattice index
\begin{equation}
\label{Eqn: wall lattice index}
  k_\tau
  =\big[\Lambda_{h(\tau_{\mathrm{out}})}:
    h_*(\Lambda_{\tau_{\mathrm{out}}})\big],
\end{equation}
where $\Lambda_{h(\tau_{\mathrm{out}})}$ denotes the integral tangent space of
the real image cone $h(\tau_{\mathrm{out}})$. Writing $q^A$ for the monomial of
curve class $A$, the wall attached to $\btau$ has function
\begin{equation}
\label{Eqn: individual canonical wall}
  f_{\btau}=\exp\big(k_\tau W_{\btau}q^A\chi^{-u_\tau}\big).
\end{equation}
The monomial is interpreted in the local monoid algebra at an interior point of
the wall, with transport between adjacent maximal cells as in
\cite[\S3.1]{CanScat}. This transport can change the $t$-exponent in local
coordinates. For example, $xyz=t$ gives $x=t y^{-1}z^{-1}$ in the chart where
$y$ and $z$ are invertible. Note also that the tangent exponent is the negative
of the outgoing contact order. The construction is first made modulo a cofinite
monomial ideal in the curve-class ring and then passed to the inverse limit.

At a general point $p$ of $B_0$, the attached function is the product of
$f_{\btau}$ over the walls containing $p$. All nontrivial walls are contained in
$B_0$ by Lemma~\ref{Lem: support of maps with at most one puncture}. Their
crossing automorphisms fix its tangent monomials, so consistency identifies
these products on adjacent regions after transport.

To write its expansion independently of an endpoint cell, we use the tangent
vectors $\be_i$ of \eqref{Eqn: e_i}. The decomposition into three sectors gives
for every $m\in\ZZ^2$ a unique expression
\[
  m=a_1\be_1+a_2\be_2+a_3\be_3,\qquad
  a_i\in\NN,\qquad \min\{a_1,a_2,a_3\}=0.
\]
Put $\vartheta_m=x^{a_1}y^{a_2}z^{a_3}$. These are the tangent theta monomials
from \S\ref{Sec: intrinsic mirror geometry}, with $xyz=t$. Choose one of the
three maximal cells of $B_0$ whose tangent cone at $v_0$ contains $-m$, and a
general point $p_m$ in this cell. In that cell, $\vartheta_{-m}$ is the toric
monomial of exponent $-m$. For $d\geq0$ and $m\neq0$, define
\begin{equation}
\label{Eqn: aggregate wall invariant}
  W_{d,m}
  =\frac{1}{|m|}
    \sum_{\substack{\btau,\ p_m\in\mathfrak p_\tau\\
      u_\tau=m,\ d(A)=d}}
    k_\tau W_{\btau},
\end{equation}
with $\btau$ running over isomorphism classes of decorated wall types. We will
see momentarily that this sum is finite. The contact order in this formula is
expressed in the chosen endpoint cell. This sum is finite. Indeed, by
Lemma~\ref{Lem: support of maps with at most one puncture}, the total curve
class belongs to the cone in \eqref{Eqn: effective cone}. In a fixed endpoint
cell the total contact order determines the intersections of $A$ with the
components of $X_0$, by the degree-contact relation for punctured maps
\cite[\S2.2]{ACGS2}. The table in \S2 shows that the common kernel of
intersection with $P,D_1,D_2,D_3$ on the group generated by $\NE(Y)$ is
$\ZZ\beta$, with $\beta$ as in \eqref{Eqn: specialization of a line}. Since
$H\cdot\beta=1$, the degree $d$ and contact order determine $A$. Finiteness for
fixed curve class, as in \cite[Prop.\,3.11]{CanScat}, then makes each sum in
\eqref{Eqn: aggregate wall invariant} finite. Consistency makes the sum
independent of the general point and of the choice of endpoint cell when $-m$
lies on a common face. The factor $k_\tau/|m|$ is part of the aggregation
convention.

Replacing $q^A$ by $t^{d(A)}$ in the wall expansion, with the tangent monomials
retained, gives
\begin{equation}
\label{Eqn: aggregate canonical wall function}
  f=\exp\Big(
    \sum_{d\geq0,\ m\in\ZZ^2\setminus\{0\}}
    |m|\,W_{d,m}\,t^d\vartheta_{-m}\Big)
  \in\QQ\lfor x,y,z,t\rfor/(xyz-t).
\end{equation}
In particular, the terms in $\log f$ with contact orders $-k\be_1$ and $k\be_1$
are
\[
  kW_{d,-k\be_1}t^dx^k,\qquad
  kW_{d,k\be_1}t^d(yz)^k
  =kW_{d,k\be_1}t^{d+k}x^{-k},
\]
as used in Theorem~\ref{Thm: logarithmic period coefficient}.

We also fix the closed invariants to be compared with these wall invariants. For
$d>0$, let
\[
  N_d=\deg\big[\overline{\scrM}_{0,0}(K_{\PP^2},d)\big]^{\virt}
\]
be the genus-zero Gromov--Witten invariant of local $\PP^2$. The moduli space is
proper since every positive-degree stable map has image in the zero section.
The specialization of its curve class is $d\beta$.

A realizable decorated closed type $\btau=(\tau,\bA)$ has a genus-zero tree 
without
legs and total curve class $d\beta$. It is rigid if its tropical maps on $B$ have 
no
moduli, or equivalently if $\tau$ is a ray mapping nontrivially to the tropical
base. Let $p_\tau:\Lambda_\tau\to\ZZ$ be the induced homomorphism on lattices and 
put
\[
  m_\tau=[\ZZ:p_\tau(\Lambda_\tau)].
\]
Writing $b_0$ for the standard log point at $0\in\AA^1$, the logarithmic
decomposition formula \cite{ACGS1}, recalled in \cite[\S5.4]{ACGS2}, gives
\begin{equation}
\label{Eqn: closed decomposition}
  N_d=\sum_\btau \frac{m_\tau}{|\Aut(\btau)|}
  \deg[\scrM(X_0/b_0,\btau)]^{\virt}.
\end{equation}
The sum is over isomorphism classes of rigid decorated types $\btau$ of total
curve class $\sum_v \bA(v)=d\beta$.

For every rigid decorated closed type $\btau$ occurring in
\eqref{Eqn: closed decomposition}, the natural identification
\[
  \scrM(X_0/b_0,\,\btau)\simeq\scrM(X,\,\btau)
\]
from \cite[Prop.\,5.13]{ACGS2} is compatible with virtual fundamental classes,
as in \cite[proof of Prop.\,3.18]{CanScat}. Thus the sum decomposition
\eqref{Eqn: closed decomposition} of $N_d$ holds with $[\scrM(X,\btau)]^\virt$
in place of $[\scrM(X_0/b_0,\btau)]^\virt$.

The following subsections relate the types and multiplicities in
\eqref{Eqn: closed decomposition} to those in \eqref{Eqn: aggregate wall
invariant}.


\subsection{Contact orders and vanishing restrictions}
\label{SS: contact orders and vanishing}

We derive further restrictions on wall types and rigid closed types from
generalized balancing and the splitting formula with toric gluing strata.

For use in the generalized balancing condition, we associate to a compact
curve class $C$ its \emph{curve contact order}
\begin{equation}
\label{Eqn: curve contact order}
  u_C=(C\cdot P)\check P+\sum_{i=1}^3(C\cdot D_i)\check D_i.
\end{equation}
Here $\check P$ and $\check D_i$ are the primitive generators of the
corresponding rays of $\Sigma(X)$. On the cone over $B_0$,
the coordinates of \S2 give
\[
  \check P=(0,0,1),\quad
  \check D_1=(1,0,1),\quad
  \check D_2=(0,1,1),\quad
  \check D_3=(-1,-1,1).
\]
The last coordinate of $u_C$ vanishes, since $P+D_1+D_2+D_3$ is principal in a
neighborhood of $V(s+1)$. Omitting this coordinate, the
intersection table of \S2 gives
\begin{equation}
\label{Eqn: curve contact orders}
\begin{aligned}
  u_\ell&=u_{h_1}=u_\beta=0,
  &u_{h_2}&=(0,-3),
  &u_{h_3}&=(2,1),\\
  u_{e_1}&=(-2,-1),
  &u_{e_2}&=(-1,1),
  &u_{e_3}&=(1,2),\\
  u_f&=u_{f_2}=(0,-1),
  &u_{f_1}&=(-1,0),
  &u_{f_3}&=(1,1).
\end{aligned}
\end{equation}

For the balancing condition, we make the following observation.

\begin{lemma}
\label{Lem: boundary tree contact}
Let $\btau=(\tau,\bA)$ be the decorated type of a punctured logarithmic map,
with underlying graph $G$ and exactly one leg $L$. Suppose all vertices
and bounded edges map to $B_0$, and let $G_\partial\subseteq G$ be the subgraph
mapping to $\partial B_0$. Then
\begin{equation}
\label{Eqn: boundary tree contact}
  u_\tau=u_L=\sum_{v\in V(G)}u_{\bA(v)}
    =\sum_{v\in V(G_\partial)}u_{\bA(v)}.
\end{equation}
\end{lemma}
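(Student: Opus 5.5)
The first equality $u_\tau=u_L$ holds by definition, since $L$ is the only leg. The second equality will follow from summing the generalized balancing condition of \cite[Prop.\,2.25]{ACGS2} (equivalently \cite[\S2]{CanScat}) over all vertices. For a vertex $v$ mapping to a point of $B_0$, the balancing condition reads
\[
  \sum_{E\ni v}\mathbf u_{(E,v)}+\sum_{L\ni v}\mathbf u(L)=u_{\bA(v)},
\]
where $u_{\bA(v)}$ is the curve contact order \eqref{Eqn: curve contact order}. Two points need care. First, $u_{\bA(v)}$ has to be interpreted in the tangent space at $h(v)$, which is where the class lives. Second, since all vertices map to $B_0$, near $s=-1$ the last coordinate vanishes, so these vectors can be compared in the common chart of the cone over $B_0$. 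I would justify this using the observation in \S\ref{SS: maximal degeneration} that $B$ is the fan of $\PP^1\times\PP^2$ with toric change of charts. With that, all contact orders become vectors in a single lattice $\ZZ^2$, and parallel transport along bounded edges in $B_0$ is trivial.

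Summing over $v\in V(G)$, each bounded edge contributes $\mathbf u_{(E,v)}+\mathbf u_{(E,v')}=0$, since the two orientations give opposite contact orders. Only the leg survives, which gives $u_L=\sum_v u_{\bA(v)}$. If $\bA(v)$ is not contained in a stratum meeting $P\cup D_i$ in the expected way, then $u_{\bA(v)}$ should still be defined by the intersection numbers of the pushed-forward class with the components. I would note that this agrees with the definition of the degree-contact relation \cite[Cor.\,2.30]{ACGS2} applied to the vertex.

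For the last equality, I need $u_{\bA(v)}=0$ whenever $h(v)$ lies in the interior of $B_0$. There are two cases.
\begin{itemize}
\item If $h(v)$ lies in the interior of a two-cell of $B_0$, then $v$ maps to a zero-dimensional stratum, so $\bA(v)=0$.
\item If $h(v)$ is the central vertex $v_0$, then $\bA(v)$ is a class in $P\cap V(s+1)\simeq\PP^2$, hence a multiple of $\ell$. By \eqref{Eqn: curve contact orders}, $u_\ell=0$.
\end{itemize}
The main obstacle is the remaining case of an interior edge of $B_0$ emanating from $v_0$, that is, a curve in a one-dimensional stratum $P\cap D_i$. There $\bA(v)$ is a multiple of the class of that $\PP^1$, namely $h_i$ or its variant in the notation of \S2. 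I would check which curve is $P\cap D_i$ for each $i$ and compute its contact order from the intersection table. If that contact order is nonzero, the statement must instead be read with $\partial B_0$ including these edges, in the sense of the dashed boundary. So I would first identify the strata carefully from Figure~\ref{Fig: toric model}. The expected outcome is that the curves over interior points are the lines $\ell$ in $P$, with $u=0$, and that the classes with nonzero $u$ (namely $f_i$, $e_i$, $h_2$, $h_3$) live in $Y_1\cup Y_2\cup Y_3$, whose tropicalization is $\partial B_0$. With that identification, every vertex off $G_\partial$ contributes zero, and the final equality follows.
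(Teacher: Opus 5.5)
Your first two equalities follow the paper's argument exactly: summing generalized balancing over the vertices cancels the bounded edges and leaves $u_L=\sum_v u_{\bA(v)}$.

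The gap is in the last equality, where your identification of strata is off by one dimension. Since $X$ is a fourfold, $B$ is three-dimensional. An open two-cell of $B_0$ therefore corresponds to the curve $P\cap D_i\cap D_j\simeq\PP^1$, not to a point. An open interior edge corresponds to the surface $P\cap D_i\simeq\PP^1\times\PP^1$, not to a $\PP^1$.

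This affects two of your cases:
\begin{itemize}
\item ``$\bA(v)=0$ because the stratum is zero-dimensional'' is not a valid reason.
\item In the interior-edge case your candidate class $h_i$ is wrong. $h_i$ is the positive section of $Y_i$, running along $D_\infty$ and away from $P$. Since $u_{h_2},u_{h_3}\neq0$, that identification would actually falsify the lemma.
\end{itemize}
You then leave the interior-edge case open, offering only an ``expected outcome'' and a possible reinterpretation of $\partial B_0$. Note also that $Y_i$ meets $P$ along $S_i$, so $Y_1\cup Y_2\cup Y_3$ does not tropicalize into $\partial B_0$.

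The missing input is Lemma~\ref{Lem: support of maps with at most one puncture}: a genus-zero punctured map with a single leg has image in $V(s+1,\tau)$. Every cell of $\Int B_0$ has the central vertex among its vertices. So for $h(v)\in\Int B_0$ the stratum $X_{\bsigma(v)}$ lies in $P$, and the component of $v$ maps into $P\cap V(s+1)\simeq\PP^2$. Hence $\bA(v)\in\NN\ell$ in all three cases:
\begin{itemize}
\item at the central vertex, $\bA(v)=d\ell$;
\item over the open edge towards $D_i$, $\bA(v)$ is a multiple of $[S_i]=\ell$, because $P\cap D_i\cap V(s+1)=S_i$ is a line in $\PP^2$;
\item over an open two-cell, $\bA(v)=0$, because $P\cap D_i\cap D_j\cap V(s+1)$ is a point.
\end{itemize}
Since $u_\ell=0$, interior vertices contribute nothing. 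This is the reasoning behind the paper's one-line assertion that curve contact orders vanish over $\Int B_0$.
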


\begin{proof}
Summing generalized balancing \cite[\S2.2]{ACGS2} over all vertices cancels the
contributions of the bounded edges, leaving $u_L=\sum_{v\in V(G)}u_{\bA(v)}$.
The curve contact orders at vertices mapping to $\Int B_0$ vanish, so only
vertices of $G_\partial$ contribute.
\end{proof}

The same argument for a decorated type with no legs and all decorating cells in
$B_0$ gives $\sum_v u_{\bA(v)}=0$. Contracted edges cause no change to either
identity, since their contact orders vanish. Thus a connected decorated subgraph
carrying no leg and with all attaching edges of contact order zero has total
curve contact order zero. The classes $\widetilde h_i=\beta$ from \S2 satisfy
this condition.

We next apply the splitting formula of \cite{Yixian}, reviewed for the relevant
special case in \cite[App.\,A]{CanScat}, to obtain further restrictions
on the types to be considered. Call an edge $E$ a \emph{boundary attachment} if
exactly one of its adjacent vertices maps to $\partial B$.

\begin{lemma}
\label{Lem: vanishing from splitting}
A decorated wall type or rigid closed type $\btau$ with nonzero invariant has at
most one boundary attachment.
\end{lemma}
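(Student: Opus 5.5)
The strategy is to cut $\btau$ along a boundary attachment and use the splitting formula with toric gluing strata (\cite{Yixian}, \cite[App.\,A]{CanScat}) to show that a type with two or more boundary attachments has vanishing virtual degree. Let $E_1,\dots,E_r$ be the boundary attachments of $\btau$ and suppose $r\ge2$. Each $E_j$ joins a vertex $v_j\in V(G_\partial)$ to a vertex $w_j$ mapping to $\Int B_0$ or to the interior of a higher cell. Since $E_j$ leaves $\partial B$, the cone $\bsigma(E_j)$ is a cone over an interior edge or cell, and the gluing stratum $X_{\bsigma(E_j)}$ is a toric stratum of a component of $X_0$. Concretely, these strata are the toric curves $P\cap D_i\cap V(s+1)$ or points of $P\cap D_i\cap D_j$, both toric with respect to the structure of \S\ref{SS: maximal degeneration}. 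This toricity is the hypothesis of the splitting formula in the form of \cite[App.\,A]{CanScat}. That formula expresses $\deg[\scrM(X,\btau)]^\virt$, up to the lattice index factor, as a product over the pieces $\btau_1,\dots,\btau_{r+1}$ obtained by cutting all $E_j$. Each cut edge becomes a pair of punctured legs with opposite contact orders $\pm u_{E_j}$, and the gluing is over the product of the toric gluing strata, which can be trivialized because the torus acts.

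The key step is to produce a dimension mismatch. Since $G$ is a tree, cutting the $r$ boundary attachments gives $r+1$ components. At least one component $G'$ lies entirely in $\partial B$-cells, namely the side containing some $v_j$ but not the leg $L_{\mathrm{out}}$; if $r\ge2$, at least $r-1$ such components avoid the leg. For such a piece $G'$ carrying no original leg and only the newly created punctures, apply Lemma~\ref{Lem: boundary tree contact} in its leg-free form. This gives that the sum of the new contact orders equals $\sum_{v\in V(G')}u_{\bA(v)}$. The piece $\btau'$ is then a punctured type with one puncture (or several), whose moduli dimension I would compute from \cite[Lem.\,3.9]{CanScat}. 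With toric gluing, the gluing stratum contributes a free torus factor, so the virtual dimension of $\scrM(X,\btau')$ exceeds the dimension of its tropical cone by the rank of that torus orbit. The invariant of $\btau'$ capped against the point class of the gluing stratum therefore vanishes unless the puncture is rigid in the tropical direction transverse to $\partial B$.

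With two boundary attachments, the two cut edges impose independent conditions on the boundary-side piece. Equivalently, the tropical type acquires an extra modulus from sliding along $\partial B$, which contradicts $\dim\tau=2$ for wall types and $\dim\tau=1$ for rigid closed types. In the alternative case the excess is absorbed, and the pushforward of the boundary-side virtual class to the gluing stratum has degree $0$ in the transverse direction, so the product vanishes.

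The main obstacle is making this dimension count precise. It requires showing that the boundary-side piece always contributes a torus-equivariant class whose pushforward to the toric gluing stratum $X_{\bsigma(E_j)}$ is torus-invariant of positive dimension, and hence zero in the degree-zero product. I would first try to realize this via the $\GG_m$-action along the ruling of $Y_i$ discussed in \S\ref{Sec: Localization on Y_i}: the action translates the boundary piece while fixing the interior piece. This would force vanishing whenever two independent such translations, one per boundary attachment, cannot be compensated by the one-dimensional family allowed for a wall type.
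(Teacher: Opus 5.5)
There is a genuine gap: the vanishing does not come from a dimension mismatch, and your key step is false. Two boundary attachments neither produce an extra tropical modulus nor impose two independent conditions on one boundary-side piece.

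Figure~\ref{Fig: types of punctured maps}(a) refutes both claims. A line at the central vertex of $B_0$, joined to one fiber class at each corner, is a rigid closed type, so $\dim\tau=1$. It has three boundary attachments, and each of its three boundary pieces has only one attachment. Similarly, (c) is a genuine wall type with two boundary attachments. So no dimension count on $\tau$ or on the split pieces can detect the vanishing, and the obstruction lives on an \emph{interior} piece carrying several attachments.

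Your fallback does not work either. Torus invariance of an evaluated class on a toric gluing stratum does not make its intersection numbers vanish, and nothing in that step distinguishes one attachment from two. The $\GG_m$-action along the ruling of $Y_i$ plays no role here.

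What is missing is the direction of $B$ normal to $B_0$, towards $P_\pm$, used through the displacement form of Wu's splitting formula. The paper's argument runs as follows.

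First, ordinary balancing shows that an interior subtree with only one attachment must carry the leg. Hence some interior subtree has at least two attachments.

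Second, all vertex classes have zero intersection with $P_\pm$. So generalized balancing keeps all contact orders parallel to $B_0$ under every realizable generization of a split type, and every split piece $\bomega$ has a constant normal coordinate.

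Third, the legs of a boundary piece point inwards, so the degree--contact relation gives $P\cdot A'>0$. A nonzero normal coordinate would place its maps in $P\cap P_+$ or $P\cap P_-$, forcing $A'=d\ell$ and $P\cdot A'=-3d\le0$. Hence boundary pieces stay at normal coordinate zero.

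Finally, choose displacement vectors with distinct normal components at two attachments of the same interior subtree. The two boundary endpoints sit at normal coordinate zero, while the interior piece has a single normal coordinate. The displaced matching conditions are therefore incompatible, and every term of the splitting formula vanishes.
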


\begin{proof}
Suppose there are at least two boundary attachments. Call a maximal connected
subgraph spanned by vertices in $\Int B_0$ an interior subtree. Ordinary
balancing excludes an interior subtree with just one attachment unless it
carries the original leg. Since there is at most one original leg, some interior
subtree has at least two attachments. Split along all boundary attachments.

All contact orders of $\btau$ are parallel to $B_0$. Since the vertex classes
have zero intersection with $P_\pm$, generalized balancing shows that this
remains true under every realizable generization of a split type occurring in
Wu's formula. After splitting, each tropical piece $\bomega$ therefore has
constant normal coordinate. Every leg $L$ of a boundary piece $\bomega$ is
pointing inwards and hence fulfills $u_L(\varphi_P)>0$ for $\varphi_P$ the
piecewise affine function on $B$ defined by $P$. Thus the degree--contact
relation for $\bomega=(\omega,A')$ shows $P\cdot A'>0$. If the common normal
coordinate were nonzero, then for sufficiently small displacements, the
corresponding stable map has image in $P\cap P_+$ or $P\cap P_-$, hence has
class $A'=d\ell$. Thus $P\cdot A'=-3d\leq0$, contrary to $P\cdot A'>0$. Hence
the boundary pieces remain in $B_0$.

The gluing strata are toric strata of $P$, so Wu's splitting formula applies.
Choose displacement vectors with distinct normal components at two attachments
of the chosen interior subtree. The boundary endpoints have normal coordinate
zero, while the interior piece has a single normal coordinate. The displaced
tropical matching conditions are therefore incompatible, and every term in the
splitting formula vanishes.
\end{proof}

\begin{figure}[tbp]
\centering \input{TypesExamples.tikz} \caption{Four configurations of images of
punctured maps and their tropicalizations, denoted (a)--(d) from left to right.
Arrows indicate punctures.} \label{Fig: types of punctured maps}
\end{figure}

Figure~\ref{Fig: types of punctured maps} illustrates the restriction. In (a), a
line in the $\PP^2$ slice of $P$, together with one fiber in each adjacent
blown-up Hirzebruch surface, has total class $\beta$. Its tropicalization is a
rigid closed type with three boundary attachments, so its contribution vanishes
by Lemma~\ref{Lem: vanishing from splitting} although it deforms to a line in a
general fiber of $X\to \AA^1$. The wall type in (c) has two boundary
attachments, and its contribution vanishes by the same argument. The wall type
in (b) has one boundary attachment. In (d), all vertices and bounded edges lie
on $\partial B_0$, and the outgoing contact order is given by Lemma~\ref{Lem:
boundary tree contact}.

In the next subsection we further restrict wall types and closed types
contributing to $W_{d,-k\be_i}$ and to $N_d$, respectively.


\subsection{Contributing wall types}
\label{SS: shape of wall types}

We orient the graph of a wall type towards its outgoing leg.

\begin{proposition}
\label{Prop: shape of wall types}
Let $\btau=(\tau,\bA)$ be a decorated wall type with $W_\btau\neq 0$ and
underlying tree $G$. Then the part $G_\partial$ mapping to $\partial B_0$ is a
connected tree with a unique last vertex $v_\partial$. This vertex maps to the
interior of a boundary edge, all other vertices of $G_\partial$ to boundary
vertices. Every edge of $G_\partial$ has contact order in $\bigcup_{i=1}^3\ZZ
u_{e_i}$.

Starting at $v_\partial$, the remainder is a straight chain of at most two
bounded edges followed by the outgoing leg, all with contact order $u_\tau$. Its
vertices other than $v_\partial$ lie on edges of $B_0$ not contained in
$\partial B_0$.
\end{proposition}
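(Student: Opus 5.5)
We combine Lemma~\ref{Lem: vanishing from splitting}, balancing in the interior of $B_0$, and Lemma~\ref{Lem: boundary tree contact}. Fix a decorated wall type $\btau$ with $W_\btau\neq0$ and orient $G$ towards $L_{\mathrm{out}}$.

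\textbf{Step 1: vertices map to $B_0$.} All vertices and bounded edges map to $B_0$. This holds by the properness part of Lemma~\ref{Lem: support of maps with at most one puncture}, together with the normal-coordinate argument in the proof of Lemma~\ref{Lem: vanishing from splitting}.

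\textbf{Step 2: the interior part is a single path.} Consider a maximal interior subtree, spanned by vertices in $\Int B_0$. If it does not carry $L_{\mathrm{out}}$, ordinary balancing forces it to have at least two attachments. By Lemma~\ref{Lem: vanishing from splitting} there is at most one boundary attachment overall, so every interior subtree carries the leg, and there is at most one of them. Hence $G\setminus G_\partial$ is a single subtree containing $L_{\mathrm{out}}$, attached to $G_\partial$ along at most one edge.

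This forces $G_\partial$ to be connected. If it had two components, each would need its own attachment to the interior subtree, since $G$ is a tree and the leg is unique, contradicting the bound of one attachment. If $G_\partial$ is empty, then $u_\tau=0$ by Lemma~\ref{Lem: boundary tree contact}, which is excluded. The vertex of $G_\partial$ closest to the leg is the unique last vertex $v_\partial$.

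The interior subtree has exactly one incoming edge and one outgoing leg. All its vertices carry classes with $u_{\bA(v)}=0$, because interior cells carry no curve contact order. Balancing at a vertex with one incoming edge and one outgoing edge therefore forces both to carry the same contact order. Any further branch would be a leaf subtree without a leg, which balancing again forbids. So the interior part is a chain, and every edge and the leg carry contact order $u_\tau$.

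\textbf{Step 3: positions on the boundary.} Suppose a vertex of $G_\partial$ other than $v_\partial$ lies in the interior of a boundary edge. Then the wall-type dimension condition $\dim\tau=2$ should fail: a vertex mid-edge that is not last adds a translation modulus along the edge that does not move the leg image. I would argue this by counting moduli of the tropical type, or by a rigidity argument using that $W_\btau$ vanishes unless the leg sweeps a two-dimensional set.

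Edges inside $\partial B_0$ run along boundary edges in directions $\pm\be_i$ up to the affine identification. Their contact orders are then determined by balancing from the leaves, using the classes $u_{e_i}$, $u_{f_i}$ and $u_{h_i}$ from \eqref{Eqn: curve contact orders}. I would check against that table that edge contact orders lie in $\ZZ u_{e_i}$. The key point is that the boundary edge of $B_0$ through the vertices where $Y_i$ meets the strips has tangent direction $u_{e_i}$.

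\textbf{Step 4: length of the chain.} The chain starting at $v_\partial$ passes through the interior of $B_0$ along direction $u_\tau$. A vertex of the chain lying in the interior of a maximal cell is bivalent with an unchanged direction, so stability forbids it unless it carries a class of degree zero and contact zero, which cannot occur in $\Int B_0$. Non-trivial vertices therefore sit only on the three interior edges of $B_0$, and a straight line crosses at most two of them.

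The main obstacle is Step 3: showing that only $v_\partial$ sits in the interior of a boundary edge, and that boundary edge contact orders lie in $\bigcup_i\ZZ u_{e_i}$. This needs a careful dimension count combined with the classes in \eqref{Eqn: curve contact orders}.
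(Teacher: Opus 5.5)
Your Steps 1 and 2 follow the paper's route: Lemma~\ref{Lem: boundary tree contact} for nonemptiness, Lemma~\ref{Lem: vanishing from splitting} for connectedness, and balancing for the chain. There is, however, a genuine gap. You never prove one assertion of the statement, namely that $v_\partial$ itself maps to the interior of a boundary edge, and Step~4 silently depends on it.

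The missing idea is the one the paper starts from. On $B$ the maps of type $\tau$ form a one-parameter family, yet the outgoing leg must sweep a two-dimensional set, so the supporting line of the leg must vary. The chain from $v_\partial$ is straight with contact order $u_\tau$, so this line is the line through $h(v_\partial)$ in the fixed direction $u_\tau$. Hence $v_\partial$ must move transversally to $u_\tau$. On $\partial B_0$ this forces $v_\partial$ into the interior of a boundary edge. It also shows that the chain has no vertex at the central vertex (such a vertex would pin the line) and never runs inside an interior edge.

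Without this, your count ``a straight line crosses at most two interior edges'' is unjustified. Suppose $v_\partial$ sat at the corner $(1,0)$ with $u_\tau=-\be_1$. Then the chain would lie inside the interior edge. Balancing and stability would allow arbitrarily many chain vertices there, each carrying a multiple of the class $\ell$ of $S_1$, since $u_\ell=0$.

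Two smaller points also need repair. First, in Step~2, balancing alone does not exclude a branch hanging off the interior chain. Summing balancing over the branch only shows that its attaching edge has contact order zero, and a leaf carrying a multiple of $\ell$ at the same interior point is balanced. What excludes it is the paper's first observation: a wall type has no bounded edge of contact order zero. Such an edge's length would be a modulus that does not move the outgoing leg, contradicting $\dim h(\tau_{\mathrm{out}})=\dim\tau+1$.

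Second, in Step~3, the boundary edges of $B_0$ do not have directions $\pm\be_i$; those are the directions of the interior edges. The differences of the corners are $(-1,1)=u_{e_2}$, $(-1,-2)=-u_{e_3}$ and $(2,1)=-u_{e_1}$. No balancing from the leaves is needed: an edge lying in a boundary edge has contact order an integral multiple of that edge's primitive direction. To rule out an edge of $G_\partial$ being a chord through the interior, use the moving $v_\partial$ again. It slides along its boundary edge while its neighbours in $G_\partial$ sit at fixed corners, so an edge of fixed direction joining them must be parallel to that boundary edge.
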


\begin{proof}
The wall-type dimension conditions exclude nontrivial deformations fixing the
outgoing evaluation. In particular, every bounded edge has nonzero contact
order, since otherwise its length could vary independently. Since $u_\tau\neq0$,
Lemma~\ref{Lem: boundary tree contact} shows that $G_\partial$ is nonempty, and
Lemma~\ref{Lem: vanishing from splitting} shows that it is connected. Ordinary
balancing excludes interior leaves, so the remainder is a straight chain of
contact order $u_\tau$, starting at the last vertex $v_\partial$ of
$G_\partial$.

The outgoing leg sweeps out a two-dimensional set, so its supporting line must
vary. Hence $v_\partial$ maps to the interior of a boundary edge, and the chain
avoids the central vertex. Any other vertex of $G_\partial$ mapping to the
interior of a boundary edge, or any vertex of the chain mapping to the interior
of a triangle, could move while fixing the outgoing evaluation. Thus all other
vertices of $G_\partial$ map to boundary vertices, and the chain is subdivided
only at interior edges. A straight line avoiding the central vertex crosses at
most two such edges. Finally, the primitive tangent vectors of the boundary
edges are the $u_{e_i}$, up to sign.
\end{proof}

\begin{remark}
\label{Rem: corner trees}
At a vertex $v$ mapping to a boundary vertex, let $E_v$ be the
outgoing edge and $E_1,\ldots,E_s$ the incoming edges. Generalized balancing
reads
\begin{equation}
\label{Eqn: corner balancing}
  u_{E_v}-\sum_{j=1}^s u_{E_j}=u_{\bA(v)}.
\end{equation}
For example, at $(1,0)$ the primitive boundary directions pointing away from the
vertex are $u_{e_2}=(-1,1)$ and $u_{e_1}=(-2,-1)$. Writing $a,b$ for the total
incident weights in these directions gives
\[
  u_{\bA(v)}=a\,u_{e_2}+b\,u_{e_1}=(-a-2b,a-b).
\]
For $a=b$ this becomes $3a\,u_{f_1}$. The other two corners are treated by the
same balancing rule with their boundary directions.

Although $G_\partial$ has no edges of contact order zero, such edges may occur
in the actual decorated type $\btau'=(\tau',\bA')$ of a punctured map marked by
$\btau$. Let $H$ be a connected subtree of its graph carrying no leg and with
all attaching edges of contact order zero. Summing the degree--contact relation
\cite[Prop.\,2.29]{ACGS2} over its vertices cancels the internal edge
contributions and gives $\sum_{v'\in V(H)}\bA'(v')\cdot D=0$ for
$D=P,D_1,D_2,D_3$. By the intersection table in \S2, the total class of $H$
pushes forward to $d\beta$ for some $d\geq0$. If $H$ is marked by a single
vertex, this excludes positive multiples of $\ell$ as total curve class,
although $u_\ell=0$.

For a subtree marked by several vertices of $\btau$, the individual curve
contact orders need not vanish. For example, the representatives of $\beta$ from
\S2 give
\[
  u_{h_1}=u_{h_2}+u_{e_2}+u_{e_3}=u_{h_3}+u_{e_1}=0.
\]
Thus the contact order constraint allows closed types to be connected to a wall
type at any pair of vertices with the same image through an edge of contact
order zero without changing the outgoing vector.
\end{remark}

For the wall invariants entering $W_d(\lambda)$, the straight continuation after
$v_\partial$ does not occur.

\begin{lemma}
\label{Lem: relevant wall root}
Let $k>0$ and let $\Delta\subset B_0$ be a maximal triangle whose tangent cone
at the central vertex contains $\be_i$. If a decorated wall type with nonzero
invariant has contact order $u_\tau=-k\be_i$ and its wall contains a general point
of $\Delta$, then its root is $v_\partial$.
\end{lemma}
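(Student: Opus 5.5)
The plan is to read off the conclusion from the shape statement of Proposition~\ref{Prop: shape of wall types}, combined with the elementary geometry of $B_0$ near the central vertex $v_0$. That proposition says the outgoing part of $G$ is a straight chain: it starts at $v_\partial$, has at most two bounded edges, and ends with the outgoing leg. All of these have contact order $u_\tau=-k\be_i$, and every chain vertex other than $v_\partial$ lies on an interior edge of $B_0$. So it suffices to show that under the hypotheses this chain has no vertex besides $v_\partial$. Then the outgoing leg is attached to $v_\partial$, which is therefore the root.

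The first step is to record that the leg does not cross cells. By definition of a tropical type, $\bsigma$ assigns a single cone of $\Sigma(X)$ to the leg $L_{\mathrm{out}}$. Hence the image of $L_{\mathrm{out}}$ in $B$ lies in one cell, and the wall $\mathfrak p_\tau$ lies in that cell. If the wall contains a general point of $\Delta$, the leg must map into $\Delta$. Similarly, a change of cell along the chain can only occur at a vertex. This is consistent with the statement in Proposition~\ref{Prop: shape of wall types} that the chain is subdivided only at interior edges.

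The second step is the planar geometry. By cyclic symmetry take $i=1$, so the chain runs in direction $-\be_1=(-1,0)$. The interior edges of $B_0$ are the three segments from $v_0$ in directions $\be_1,\be_2,\be_3$. The maximal triangles have tangent cones spanned by $\{\be_1,\be_2\}$, $\{\be_2,\be_3\}$ and $\{\be_3,\be_1\}$ at $v_0$, so $\Delta$ is one of the two triangles adjacent to the $\be_1$-edge. A horizontal line $\{y=c\}$ never meets the $\be_1$-edge transversally. If it lay on that edge, it would pass through $v_0$, which Proposition~\ref{Prop: shape of wall types} excludes. Hence the line meets at most one of the $\be_2$- and $\be_3$-edges: the $\be_2$-edge if $c>0$ and the $\be_3$-edge if $c<0$. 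Consequently the chain has at most one vertex besides $v_\partial$.

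If such a vertex exists, it lies on, say, the $\be_2$-edge, and the leg continues beyond it in direction $(-1,0)$ into the region $x<0$. That region is the triangle with tangent cone spanned by $\be_2,\be_3$, which does not contain $\be_1$; the case $c<0$ is symmetric. By the first step, the wall then avoids every triangle whose tangent cone contains $\be_1$. This contradicts the assumption that it contains a general point of $\Delta$, so the chain is empty and the root is $v_\partial$.

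I expect the main obstacle to be the first step, namely justifying carefully that the leg image cannot pass from one maximal cell to another without a vertex. This rests on the cone assignment $\bsigma$ in a tropical type, which places each leg in a single cone of $\Sigma(X)$. I also need to check that the notion of "contains a general point of $\Delta$" refers to the leg's own cell rather than to a transported wall. If needed, I would invoke the endpoint-cell convention of \eqref{Eqn: aggregate wall invariant}, under which the wall supporting $W_{d,-k\be_i}$ is read in a cell whose tangent cone contains $\be_i$.
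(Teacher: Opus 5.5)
Your proposal is correct and is essentially the paper's argument. Both reduce, via Proposition~\ref{Prop: shape of wall types}, to the observation that the straight line carrying the chain and outgoing leg meets no interior edge of $B_0$ on the relevant side of $\Delta$. The paper runs this backwards from a general point $p\in\Int\Delta$ on the leg in direction $\be_i$; that ray stays in $\Delta$ until it reaches the boundary edge of $\Delta$. This makes both your case distinction on the sign of $c$ and the single-cell property of the leg, which you flagged as the main obstacle, unnecessary.
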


\begin{proof}
Choose a tropical representative whose outgoing leg passes through a general
point $p\in\Int\Delta$. Following the line backwards from $p$ in direction
$\be_i$ stays in $\Delta$ until it reaches the interior of its boundary edge. It
meets no interior edge of $B_0$. By Proposition~\ref{Prop: shape of wall types},
any vertex between $v_\partial$ and the outgoing leg would lie on such an
interior edge. Hence there is no bounded continuation, and the outgoing leg is
attached to $v_\partial$.
\end{proof}

Consequently, every contributing type in $W_{d,-k\be_i}$, with the endpoint
convention of \eqref{Eqn: aggregate wall invariant}, has all its vertices and
bounded edges on $\partial B_0$. Its last vertex lies in the interior of a
boundary edge and carries the outgoing leg.


\subsection{Point constraint interpretation of some wall invariants}
\label{SS: alternative wall invt interpretation}

For a tropical type $\eta$, let $Q_\eta$ denote its basic monoid and write
$\eta_\ZZ^\gp=\Lambda_\eta$ for the integral lattice of its parameter cone.

For concreteness, we consider outgoing contact order $-k\be_1=(-k,0)$, with $k>0$.
The cases $-k\be_2$ and $-k\be_3$ are analogous. Following \S2, we use local
defining equations $r,v,z$ for $P,D_1,D_2$, respectively. Their images in the
ghost sheaf $\ol\M_X$ extend uniquely to global sections denoted $\ol r,\ol
v,\ol z$ that vanish at the generic points of the other divisors. Note these can
also be viewed as integral piecewise linear functions on $\Sigma(X)$. Let
$\Delta\subset B_0$ be the triangle spanned by $(0,0),(1,0),(0,1)$. On the cone
over $\Delta$, a cone in $\Sigma(X)$, the restrictions of $(\ol v,\ol
z,\rho)$ are integral linear coordinates, with $\rho=\ol r+\ol v+\ol z$; the
$D_1$-ray is generated by $(1,0,1)$. Their restrictions to $\Delta$ give the
affine coordinates $(\ol v,\ol z)$ of the diagram, but they take value $0$
at $(0,0),(-1,-1)\in B_0$, hence do not define coordinates on the other
triangles. The essential point for the following discussion is that pairing with
$\ol v$ and $\ol z$ is enough to detect all relative contact orders that
specialize to the generic point of $P\cap D_1\cap D_2$.

Put $S_1=P\cap D_1\cap V(\sigma)\simeq\PP^1$ and let $p_1=S_1\cap D_2$ denote
the reduced intersection subscheme. The projection $P\cap D_1\to S_1$ pulls
$p_1$ back to the reduced Cartier divisor $P\cap D_1\cap D_2$, a $\PP^1$
intersecting $V(s+1)$ in $p_1$.

Let $\btau=(\tau,\bA)$ be a contributing wall type whose root vertex maps to the
interior of the boundary edge joining $(1,0)$ to $(0,1)$. We rarely refer to the
undecorated type $\tau$; when we do, the distinction from the deformation
parameter for $X\to\AA^1$ should be clear from context. Specializing this image
to $(1,0)$ determines a face $\delta$ of the two-dimensional cone $\tau$.
Contract all bounded edges whose lengths vanish on this face and sum the
push-forwards of the vertex decorations on each connected contracted subgraph.
The remaining contact orders of edges are carried along the corresponding face
maps. This gives a realizable decorated type $\bdelta$ with underlying cone the
ray $\delta$ and a contraction $c:\btau\to\bdelta$. Its outgoing contact order
remains $(-k,0)$. Fix such a decorated type $\bdelta$.

Composition with the contraction $c:\btau\to\bdelta$ turns a $\btau$-marking
into a $\bdelta$-marking. It therefore induces morphisms
\[
j_c:\scrM(X,\btau)\to\scrM(X,\bdelta),\quad
j_c^\ev: \frakM^\ev(\cX,\btau)\to \frakM^\ev(\cX,\bdelta).
\]
Here the evaluation stacks retain evaluation at the outgoing puncture and $\cX$
is the Artin fan of $X$. For $\btau$, this evaluation takes values in $P\cap
D_1\cap D_2\simeq \PP^1$, for $\bdelta$ in $P\cap D_1\simeq \PP^1\times\PP^1$.
Composing with the projection $P\cap D_1\to S_1$ defines
\[
\varepsilon\colon \frakM^\ev(\cX,\bdelta)\to S_1\simeq \PP^1.
\]
We use the same notation for its composite with $\scrM(X,\bdelta)\to
\frakM^\ev(\cX,\bdelta)$.

For a realizable decorated type $\boldsymbol\eta=(\eta,\mathbf A)$ with a given
contraction to $\bdelta$, let $v_{\mathrm{root}}$ be its root vertex and let
$h_{v_{\mathrm{root}}}:\eta\to\bsigma(v_{\mathrm{root}})$ be evaluation at this
vertex in the universal tropical family. Define $q_\eta=
h_{v_{\mathrm{root}}}^*\ol z\in Q_\eta$. This definition is compatible with
contractions, and $q_\delta=0$ since $\ol z$ vanishes on the ray in $\Sigma(X)$
corresponding to $D_1$. In particular, $q_\eta$ vanishes on the ray
$\delta\subset \eta$.

Let $\cR_{\bdelta,p_1}$ be the set of isomorphism classes, over the fixed
decorated target $\bdelta$, of contractions $c:\btau\to\bdelta$ with $\tau$
realizable, $\delta$ a codimension-one face of $\tau$, and $q_\tau\neq0$. Since
$q_\tau|_\delta=q_\delta=0$ and $q_\tau\neq0$, the codimension-one face $\delta$
equals $\tau\cap q_\tau^\perp$. Regarding $q_\tau$ as an integral linear
function on $\tau$, define $r_c>0$ by $q_\tau(\tau_\ZZ^\gp)=r_c\ZZ$. Write
$\Aut(c)=\{\alpha\in\Aut(\btau)\mid c\circ\alpha=c\}$. We impose no planarity
condition on these refinements of $\bdelta$; the divisor identity below is on
the full evaluation stack.

\begin{lemma}
\label{Lem: wall incidence divisor}
The following identities hold in the Chow groups of
$\frakM^\ev(\cX,\bdelta)$ and $\scrM(X,\bdelta)$, respectively.
\begin{equation}
\label{Eqn: wall incidence cycles}
\begin{aligned}
 \varepsilon^*[p_1]\cap[\frakM^\ev(\cX,\bdelta)]
 &=
 \sum_{[c:\btau\to\bdelta]\in\cR_{\bdelta,p_1}}
 \frac{r_c}{|\Aut(c)|}(j_c^\ev)_*
 [\frakM^\ev(\cX,\btau)],\\
 \varepsilon^*[p_1]\cap[\scrM(X,\bdelta)]^{\virt}
 &=
 \sum_{[c:\btau\to\bdelta]\in\cR_{\bdelta,p_1}}
 \frac{r_c}{|\Aut(c)|}(j_c)_*
 [\scrM(X,\btau)]^{\virt}.
\end{aligned}
\end{equation}
\end{lemma}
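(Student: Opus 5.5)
The plan is to prove the first identity, on the evaluation stack $\frakM^\ev(\cX,\bdelta)$, by a local toric computation. The second identity then follows by refined pullback along $\scrM(X,\bdelta)\to\frakM^\ev(\cX,\bdelta)$, using the compatibility of virtual classes with the cartesian squares relating $\scrM(X,\btau)\to\frakM^\ev(\cX,\btau)$.

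\textbf{Step 1: identify $\varepsilon^*[p_1]$ with the zero divisor of a tropical function.} On $\Delta$, the point $p_1=S_1\cap D_2$ is cut out by the local equation $z$ of $D_2$ restricted to $S_1$. Pulled back along $\varepsilon$, it is cut out by the composite of the root-vertex evaluation with $z$. Since the root maps to $P\cap D_1$ and the projection $P\cap D_1\to S_1$ is compatible with the log structure, this composite is the image of the monomial $q=h_{v_{\mathrm{root}}}^*\ol z$ in the basic monoid. The point is that on a basic stable map, $\ol z$ pulled back to the root stratum becomes a section of the ghost sheaf of the base. Its associated line bundle with section has zero locus equal to $\varepsilon^{-1}(p_1)$. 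Because $q_\delta=0$, the section is not identically zero on the generic locus of $\frakM^\ev(\cX,\bdelta)$. So $\varepsilon^*[p_1]$ is the Cartier divisor of the piecewise linear function $q$ on the Artin fan of the evaluation stack.

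\textbf{Step 2: compute this divisor locally.} The evaluation stack $\frakM^\ev(\cX,\bdelta)$ is log smooth and étale locally modelled on toric stacks $[\Spec\CC[Q_\eta]/\ldots]$ times smooth factors. On such a chart, the divisor of the monomial $q_\eta$ is the sum over codimension-one orbits, that is, over rays of the dual cone with $q_\eta$ positive on the corresponding facet. Each ray is weighted by the order of vanishing. These codimension-one strata are exactly the closures of the images of $\frakM^\ev(\cX,\btau)$ for two-dimensional $\tau$ containing $\delta$ as a face with $q_\tau\neq0$; these are precisely the elements of $\cR_{\bdelta,p_1}$. The vanishing order is the lattice index $r_c$, defined by $q_\tau(\tau^\gp_\ZZ)=r_c\ZZ$.

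\textbf{Step 3: account for the passage to the moduli of $\btau$-marked maps.} The morphism $j_c^\ev$ has degree $|\Aut(c)|$ onto its image stratum, because markings differing by automorphisms of $\btau$ over $c$ give the same $\bdelta$-marked object. This produces the factor $1/|\Aut(c)|$.

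The main obstacle is the identification in Step 1 of $\varepsilon^*[p_1]$ with the divisor of $q$ with multiplicity one. This requires checking that $z$ restricted to $P\cap D_1\cap V(s+1)$ vanishes reducedly along $p_1$, and that no contributions arise from strata where the root specializes further, or from non-realizable refinements. I would handle this by a dimension argument: such strata have codimension at least two in the evaluation stack, so they do not contribute to the Chow class. I would also use the idealized log smoothness from \cite{ACGS2} to justify the toric local model.
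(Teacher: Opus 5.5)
Your proposal is correct and follows essentially the same route as the paper. It identifies $\varepsilon^*(z|_{S_1})$ with a unit times $\mathrm u^{q_\eta}$, reads off the components of this Cartier divisor as toric divisors indexed by $\cR_{\bdelta,p_1}$ with multiplicity $r_c$, divides by the generic degree $|\Aut(c)|$ of $j_c^\ev$, and gets the virtual identity by virtual pullback. Two small corrections. First, the smooth-local model of the $\bdelta$-marked evaluation stack is not a log smooth chart on $Q_\eta$ but the reduced toric variety $\Spec\CC[F]$. Here $F\subset Q_\eta$ is the facet killed by the localization map $Q_\eta\to Q_\delta$, and the identification comes from \cite[Prop.\,3.24]{ACGS2} together with realizability of $\delta$; this makes your separate dimension argument unnecessary. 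Second, the paper does the virtual step with decorations forgotten, via \cite[Prop.\,5.21]{ACGS2}, and then regroups $\Aut(c_0)$-orbits of source decorations; this is equivalent to your decorated cartesian squares.
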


\begin{proof}
The morphisms forgetting curve-class decorations are strict and \'etale, so the
local descriptions and dimension formulas for undecorated stacks from
\cite{ACGS2} apply also to the decorated stacks. For a realizable type $\eta$
occurring here, \cite[Prop.\,3.30]{ACGS2} says that $\frakM(\cX,
\boldsymbol\eta)$ is reduced and pure-dimensional of dimension $-2-\dim\eta$.
Retaining evaluation at the unique puncture adds $\dim X=4$ by
\cite[Def.\,5.14]{ACGS2}. Thus $\frakM^\ev(\cX,\boldsymbol\eta)$ is reduced and
pure of dimension $2-\dim\eta$. The obstruction theory over the evaluation stack
has virtual relative dimension $c_1(\Theta_X)\cdot A+4(1-g-|\mathbf S|)=0$ by
\cite[(4.18)]{ACGS2}, since $X$ is logarithmically Calabi--Yau, $g=0$ and
$\mathbf S=\{L_{\mathrm{out}}\}$. Consequently, the evaluation stack and virtual
class for $\bdelta$ have dimension one, and those for the refinements in
$\cR_{\bdelta,p_1}$ have dimension zero.

Consider a geometric point of $\frakM^\ev(\cX,\bdelta)$ with actual
decorated type $\boldsymbol\eta$. Its marking gives a contraction 
$\boldsymbol\eta\to\bdelta$
and hence a localization homomorphism $\chi:Q_\eta\to Q_\delta\simeq \NN$. The
marking ideal of \cite[Def.\,3.20]{ACGS2} imposes the target strata and the
nodes prescribed by $\delta$, together with the basic-monoid localization
condition. Since $\delta$ is realizable, \cite[Prop.\,3.24]{ACGS2} identifies
the union of the marking and puncturing ideals with
$I_\delta=\chi^{-1}(Q_\delta\setminus\{0\})$.

The complement $F=\chi^{-1}(0)$ is a facet of $Q_\eta$. By
\cite[Rem.\,3.27]{ACGS2}, a smooth local chart of the $\bdelta$-marked stack has
monoidal factor
\[
 \Spec\bigl(\CC[Q_\eta]/(\mathrm u^a\mid a\in I_\delta)\bigr)
 =\Spec\CC[F].
\]
Here $\mathrm u^a$ denotes the monomial associated to $a$. The passage to the 
evaluation stack is strict and smooth, so the same
description applies there.

We compute the fiber equation at a point over $p_1$. The toric coordinate $z$ on
$P$ restricts to a local parameter on $S_1$ at $p_1$. By the definition of
$q_\eta$ and the computation of the outgoing contact order, the pull-back of
$\ol z$ to the outgoing puncture is $(q_\eta,0)$. The evaluation stack records a
lift of the underlying puncture evaluation to $X$, and consequently
$\varepsilon^*(z|_{S_1})$ is a unit times $\mathrm u^{q_\eta}$ in the above
chart.

Compatibility with the contraction gives $\chi(q_\eta)=q_\delta=0$, hence
$q_\eta\in F$. Moreover, $q_\eta\neq0$ at a point over $p_1$, since otherwise
the fiber equation would be a unit. Since $F$ is integral, multiplication by
$\mathrm u^{q_\eta}$ in $\CC[F]$ is injective. The smooth local description
therefore shows that $\varepsilon^{-1}(p_1)$ is an effective Cartier divisor.

The support of $V(\mathrm u^{q_\eta})\subset\Spec\CC[F]$ is the union of the
toric divisors defined by $(\mathrm u^a\mid a\in F\setminus H)$, where $H$ runs
over the facets of $F$ not containing $q_\eta$. The dual face $\tau=\eta\cap
H^\perp$ gives a factorization $\boldsymbol\eta\to\btau\xrightarrow{c}\bdelta$
with $\tau$ realizable, $\dim\tau-\dim\delta=1$ and $q_\tau\neq0$. Thus
$[c]\in\cR_{\bdelta,p_1}$. By \cite[Prop.\,3.24]{ACGS2}, the union of the
$\tau$-marking and puncturing ideals induces the ideal $(\mathrm u^a\mid a\in
F\setminus H)$ in $\CC[F]$. Hence these toric divisors are the local images of
the marking maps $j_c^\ev$, proving that the marking maps cover the irreducible
components of $\varepsilon^{-1}(p_1)$.

At a generic point of such a component, take $\eta=\tau$ and write
$F=Q_\tau\cap\delta^\perp=\NN p$ with $p$ primitive. Then $q_\tau=r_cp$ by the
definition of $r_c$. The fiber equation is a unit times $(\mathrm u^p)^{r_c}$,
while the $\btau$-marked locus is defined by $(\mathrm u^p)$. Thus the component
has multiplicity $r_c$.

The marking map $j_c^\ev$ is finite of generic degree $|\Aut(c)|$ onto its
image, since the compatible $\btau$-markings of a $\bdelta$-marked object of
actual type $\tau$ form an $\Aut(c)$-torsor, as in \cite[Rem.\,3.31 and the
proof of Thm.\,5.25]{ACGS2}. Dividing by this degree and summing the
multiplicities proves the first identity.

For the virtual identity, apply the same divisor computation with curve
decorations forgotten. For a fixed underlying contraction $c_0:\tau\to\delta$,
\cite[Prop.\,5.21]{ACGS2} identifies the pull-back of $\frakM^\ev(\cX,\tau)$
along $\scrM(X,\bdelta)\to\frakM^\ev(\cX,\delta)$ with the disjoint union of
$\scrM(X,\btau)$ over all compatible source decorations. The obstruction
theories agree under this pull-back. Virtual pull-back commutes with proper
push-forward and intersection with Cartier divisor classes, so it gives the sum
over these decorations. The group $\Aut(c_0)$ acts on the decorations, with
stabilizer $\Aut(c)$ for a decorated contraction $c$. Each orbit therefore has
$|\Aut(c_0)|/|\Aut(c)|$ elements. Grouping by these orbits turns the coefficient
$r_c/|\Aut(c_0)|$ into $r_c/|\Aut(c)|$, proving the second identity.
\end{proof}

Let $\cT_{\bdelta,p_1}$ be the set of isomorphism classes of decorated wall
types whose specialization along the boundary edge joining $(1,0)$ to $(0,1)$
contracts to a decorated type isomorphic to $\bdelta$. For the generator $b_0$
of $\delta_\ZZ\simeq\NN$, put $m_\delta=(\rho\circ
h_{v_{\mathrm{root}}})(b_0)>0$, where $h_{v_{\mathrm{root}}}$ is evaluation at
the root vertex of $\delta$. Equivalently,
$h_{v_{\mathrm{root}}}(b_0)=m_\delta\check D_1$.

\begin{proposition}
\label{Prop: wall degree comparison}
The moduli space $\scrM(X,\bdelta)$ is proper. For a general point $q\in S_1$,
\begin{equation}
\label{Eqn: wall degree comparison}
 \frac1k\sum_{\btau\in\cT_{\bdelta,p_1}}
 k_\tau W_{\btau}
 =
 \frac{m_\delta}{|\Aut(\bdelta)|}
 \deg\bigl(\varepsilon^*[q]\cap[\scrM(X,\bdelta)]^{\virt}\bigr).
\end{equation}
\end{proposition}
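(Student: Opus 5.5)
The plan is to derive \eqref{Eqn: wall degree comparison} from the virtual identity of Lemma~\ref{Lem: wall incidence divisor}. Three steps are needed: move the point $q$ to $p_1$, count contractions against automorphisms, and compute the index $k_\tau$ directly.

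\emph{Properness and moving the point.} First, properness of $\scrM(X,\bdelta)$. The outgoing contact order $(-k,0)$ is not a multiple of any $u_{e_i}$, so by \eqref{Eqn: curve contact orders} and Lemma~\ref{Lem: boundary tree contact} the total class is not a pure multiple of an exceptional class. Lemma~\ref{Lem: support of maps with at most one puncture} then applies to the fixed decorated type $\bdelta$. Next, $S_1\simeq\PP^1$, so $[q]=[p_1]$ in $A_0(S_1)$. Since $\varepsilon^*$ respects rational equivalence and $\scrM(X,\bdelta)$ is proper, the degree of $\varepsilon^*[q]\cap[\scrM(X,\bdelta)]^\virt$ equals that with $p_1$ in place of $q$. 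By the second identity of Lemma~\ref{Lem: wall incidence divisor}, this degree is
\[
\sum_{[c:\btau\to\bdelta]\in\cR_{\bdelta,p_1}}\frac{r_c}{|\Aut(c)|}\deg[\scrM(X,\btau)]^\virt .
\]

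\emph{Matching index sets and counting contractions.} I will show that the decorated types $\btau$ occurring as sources in $\cR_{\bdelta,p_1}$ with nonzero virtual degree are exactly the wall types in $\cT_{\bdelta,p_1}$.
\begin{enumerate*}[label=(\roman*)]
\item A source $\btau$ has $\dim\tau=2$, and its virtual class has dimension zero.
\item The condition $q_\tau\neq0$ says the root moves off $(1,0)$ along the boundary edge. Together with the fixed nonzero outgoing contact order, this gives the wall condition $\dim h(\tau_{\mathrm{out}})=3$.
\item Conversely, by Proposition~\ref{Prop: shape of wall types} and Lemma~\ref{Lem: relevant wall root}, every wall type in $\cT_{\bdelta,p_1}$ has its root on that edge. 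Its specialization to $(1,0)$ is the facet $\delta=\tau\cap q_\tau^\perp$.
\end{enumerate*}
For fixed $\btau$, the contraction to $\bdelta$ is determined up to $\Aut(\bdelta)$. Hence the contractions form an $\Aut(\bdelta)$-set, with classes taken modulo $\Aut(\btau)$ and stabilizers $\Aut(c)$. The orbit--stabilizer count then gives
\[
\sum_{[c]\text{ with source }\btau}\frac{1}{|\Aut(c)|}=\frac{|\Aut(\bdelta)|}{|\Aut(\btau)|}.
\]
I expect $r_c$ to depend only on $\tau$. The right-hand side of \eqref{Eqn: wall degree comparison} then becomes $\sum_{\btau}m_\delta r_\tau W_\btau$, using the definition \eqref{Eqn: decorated wall invariant}.

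\emph{The lattice index.} It remains to prove $k_\tau=k\,m_\delta\,r_\tau$, and this is the step I expect to need the most care. Work in the integral coordinates $(\ol v,\ol z,\rho)$ on the cone over $\Delta$. Choose a basis $b_0,b_1$ of $\Lambda_\tau$ with $b_0$ generating $\delta_\ZZ$ and $q_\tau(b_1)=r_c$, which is possible since $q_\tau(b_0)=0$. Then $\Lambda_{\tau_{\mathrm{out}}}=\Lambda_\tau\oplus\ZZ$, where the extra generator is the leg length. Under $h$ the three basis vectors map to
\[
m_\delta(1,0,1),\qquad (a,r_c,c),\qquad (-k,0,0).
\]
The middle vector has $\ol z$-coordinate $r_c$ because $q_\tau=h_{v_{\mathrm{root}}}^*\ol z$. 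The determinant of these three vectors is $\pm k\,m_\delta r_c$. The image cone is $3$-dimensional in a rank-$3$ lattice, so this determinant is the index \eqref{Eqn: wall lattice index}.

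The delicate points are two. First, the leg is attached at the root, so the leg-length generator contributes exactly $u_\tau$. Second, $(\ol v,\ol z,\rho)$ really is a lattice basis on the relevant cone, as stated in \S\ref{SS: alternative wall invt interpretation}. Dividing by $k$ then yields \eqref{Eqn: wall degree comparison}.
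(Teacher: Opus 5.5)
Your route is the same as the paper's. You move $q$ to $p_1$ on $S_1\simeq\PP^1$, apply the second identity of Lemma~\ref{Lem: wall incidence divisor}, and count contractions by orbit--stabilizer on the $\Aut(\bdelta)$-torsor of isomorphisms from the corner contraction of $\btau$ to $\bdelta$. You then get $k_\tau=k\,m_\delta r_c$ from the same three vectors in $(\ol v,\ol z,\rho)$. Your properness check via contact orders is a harmless extra detail.

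The gap is in step (ii), the claim that every nonzero-degree source of $\cR_{\bdelta,p_1}$ lies in $\cT_{\bdelta,p_1}$. The condition $q_\tau\neq0$ only says that $\bsigma(v_{\mathrm{root}})$ contains the rays of $D_1$ and $D_2$. It does not say that the root moves along the boundary edge. A priori the root could lie over the interior of $\Delta$. It could also lie in a cone containing the ray of $P_+$ or $P_-$, since $\cR_{\bdelta,p_1}$ imposes no planarity condition. Such a source with nonzero degree would contribute to the right-hand side of \eqref{Eqn: wall degree comparison} but not to the left, so it must be excluded, and nothing in (i)--(iii) excludes it. Your lattice-index computation also presupposes that the outgoing leg image lies in the cone over $\Delta$, and that depends on this point.

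The fix is the order of argument the paper uses.
\begin{enumerate}
\item First show that a nonzero-degree source is a wall type, without yet locating the root. You have $\dim\tau=2$. The type $\tau$ is balanced because generalized balancing of the actual type of a map in the nonempty $\scrM(X,\btau)$ descends under contraction; you omit balancing entirely.
\item The vectors $h_{v_{\mathrm{root}}}(b_0)$, $u_\tau$, $h_{v_{\mathrm{root}}}(b_1)$ are linearly independent in the lattice of $\bsigma(L_{\mathrm{out}})$. The first two lie in $\ker\ol z$ and are separated by $\rho$: one has $\rho=m_\delta>0$, the other $\rho=0$ with $u_\tau\neq0$. The third has $\ol z=r_c\neq0$. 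This gives $\dim h(\tau_{\mathrm{out}})=3$.
\item Only then does Lemma~\ref{Lem: support of maps with at most one puncture} put the tropicalization in $B_0$, killing non-planar sources.
\item Proposition~\ref{Prop: shape of wall types}, now applicable since $W_\btau\neq0$, says the root is $v_\partial$ or lies on an interior edge. The only interior edge through $(1,0)$ has $\ol z=0$, contradicting $q_\tau\neq0$. Of the two boundary edges at $(1,0)$, only the one to $(0,1)$ has $\ol z>0$.
\end{enumerate}
So Proposition~\ref{Prop: shape of wall types} is needed in this forward direction. In your converse (iii) it is not needed: membership in $\cT_{\bdelta,p_1}$ already puts the root in the interior of that edge, where $\ol z>0$ gives $q_\tau\neq0$.
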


\begin{proof}
Properness of the moduli space has already been proved in Lemma~\ref{Lem: support 
of maps with at most one puncture}.

Consider a term of nonzero degree in the second identity of \eqref{Eqn: wall
incidence cycles}, corresponding to $c:\btau\to\bdelta$. The moduli space
$\scrM(X,\btau)$ is nonempty. The actual type of a map in it refines $\tau$ and
satisfies generalized balancing, which descends under contraction. Thus $\tau$
is balanced in the sense of \cite[Def.\,2.2]{CanScat}.

Let $v_{\mathrm{root}}$ be its root vertex and let $b_0$ be the generator of
$\delta_\ZZ\simeq\NN$. Since $\delta_\ZZ^\gp\subset \tau_\ZZ^\gp$ is saturated,
we can extend $b_0$ to a basis $(b_0,b_1)$ and then $q_\tau(b_1)=r_c$. In the
lattice of $\bsigma(L_{\mathrm{out}})$, the restriction of $\ol z$ vanishes on
$h_{v_{\mathrm{root}}}(b_0)$ and the full outgoing vector $u_\tau$, and takes
value $r_c$ on $h_{v_{\mathrm{root}}}(b_1)$. Moreover,
$\rho(h_{v_{\mathrm{root}}}(b_0))=m_\delta>0$, whereas $\rho(u_\tau)=0$ and
$u_\tau\neq0$. These three vectors are therefore linearly independent. Hence the
outgoing-leg cone has three-dimensional image, and $\dim\tau=2$ makes $\tau$ a
wall type.

The support restriction of Lemma~\ref{Lem: support of maps with at most one
puncture} now puts its height-one tropicalization in $B_0$. Thus non-planar
refinements have zero degree in the virtual identity and can be ignored. By
Proposition~\ref{Prop: shape of wall types}, the root vertex $v_{\mathrm{root}}$
is either the last boundary vertex or has image on an interior edge. Since the
image of $v_{\mathrm{root}}$ specializes to $(1,0)$, the only possible interior
edge is the one joining the central vertex to $(1,0)$. But $\ol z$ vanishes on
the cone over this edge, contradicting $q_\tau\neq0$. Thus $v_{\mathrm{root}}$
is the last boundary vertex. Of the two boundary edges adjacent to $(1,0)$, only
the edge joining it to $(0,1)$ has positive $\ol z$ on its interior. Hence
$\btau\in\cT_{\bdelta,p_1}$.

The outgoing-leg image is now contained in the cone over $\Delta$, where we may
use the coordinates $(\ol v,\ol z,\rho)$ fixed above. Evaluation at the root
vertex sends $b_0$ to $m_\delta(1,0,1)$ and $b_1$ to $(a,r_c,b)$ for some
$a,b\in\ZZ$, since $q_\tau=h_{v_{\mathrm{root}}}^*\ol z$. Together with the
integral parameter along the outgoing leg, $b_0,b_1$ generate its integral
lattice. Their images $m_\delta(1,0,1)$, $(a,r_c,b)$ and $(-k,0,0)$ generate a
sublattice of $\ZZ^3$ of index $k\,m_\delta r_c$. Thus $k_\tau=k\,m_\delta r_c$.

It remains to group the contractions by their decorated source types. Fix
$\btau\in\cT_{\bdelta,p_1}$ and let $\bdelta'$ be its corner contraction.
Every automorphism of $\btau$ preserves the corner face $\tau\cap q_\tau^\perp$
and hence induces an automorphism of $\bdelta'$. The isomorphisms
$\bdelta'\to\bdelta$ form an $\Aut(\bdelta)$-torsor on which $\Aut(\btau)$ acts.
Its orbits are the isomorphism classes of contractions $c:\btau\to\bdelta$ over
the fixed target, and their stabilizers are $\Aut(c)$. Thus the sum of
$1/|\Aut(c)|$ over these contractions is $|\Aut(\bdelta)|/|\Aut(\btau)|$. The
integer $r_c$ is determined by $q_\tau$ and hence is independent of the chosen
isomorphism $\bdelta'\to\bdelta$.

\sloppy
Taking degrees in \eqref{Eqn: wall incidence cycles}, grouping by decorated
source types, and using $W_{\btau}=\deg[\scrM(X,\btau)]^\virt/|\Aut(\btau)|$ and
$k_\tau=k\,m_\delta r_c$ proves the formula with $q=p_1$. Finally, rational
equivalence of $p_1$ and $q$ on $S_1\simeq\PP^1$ gives
$\varepsilon^*[p_1]=\varepsilon^*[q]$, proving the proposition.
\end{proof}

\fussy The same argument with $D_3$ in place of $D_2$ expresses the general
point constraint as the sum for wall types whose root vertices map to the
interior of the other boundary edge adjacent to $(1,0)$.


\subsection{Splitting at a distinguished vertex}
\label{SS: cutting at the last vertex}

We treat two kinds of decorated types $\bdelta=(\delta,\bA)$ simultaneously.
In the punctured case, $\bdelta$ is obtained in \S\ref{SS: alternative wall
invt interpretation} by specializing the image of the root vertex $v_0$ to a
boundary vertex. Its outgoing contact order is $-k\be_i$, with $k>0$, and
$v_0$ denotes the root vertex. In the closed case, $\bdelta$ is a rigid type
contributing to \eqref{Eqn: closed decomposition}, with total class $d\beta$,
$d>0$, and we choose a vertex $v_0$ with $d(\bA(v_0))>0$. Write $n=1$ and
$n=0$, respectively, for the number of original marked points.

In both cases every vertex maps to a boundary vertex of $B_0$, and every
bounded edge maps onto a boundary edge. For the punctured case this follows
from Proposition~\ref{Prop: shape of wall types} and Lemma~\ref{Lem: relevant
wall root}. In the closed case, Lemma~\ref{Lem: support of maps with at most
one puncture} together with classical balancing at vertices mapping to the
interior of $B_0$ forces at least two boundary attachments, contrary to
Lemma~\ref{Lem: vanishing from splitting}. Thus all vertices lie on $\partial
B_0$ and rigidity forces them to be boundary vertices.

Let $D_i$ be the component given by the strata decoration $\bsigma(v_0)$ at
$v_0$. In the punctured case choose a general point $q\in S_i=P\cap D_i\cap
V(s+1)$ and retain the notation $\varepsilon$ for the outgoing evaluation
composed with the projection $D_i\to S_i$. Put
\begin{equation}
\label{Eqn: point constrained delta contribution}
  J_{\bdelta}(q)
  =\frac{m_\delta}{|\Aut(\bdelta)|}
    \deg\bigl(\varepsilon^*[q]\cap
      [\scrM(X,\bdelta)]^\virt\bigr).
\end{equation}
By Proposition~\ref{Prop: wall degree comparison}, this is the sum of the
contributions $k_\tau W_{\btau}/k$ specializing to $\bdelta$ along the chosen
boundary edge. For a closed type put
\begin{equation}
\label{Eqn: closed delta contribution}
  N_{\bdelta}
  =\frac{m_\delta}{|\Aut(\bdelta)|}
    \deg[\scrM(X,\bdelta)]^\virt
\end{equation}
for the summand in \eqref{Eqn: closed decomposition} given by $\delta$ and the 
subsequent discussion on the relative-absolute comparison. Here, in
either case, $m_\delta=\big|\coker (\Sigma(\pi)\circ h_{v_0})\big|$, for
$h_{v_0}:\delta_\ZZ^\gp\to \bsigma(v_0)^\gp_\ZZ$ given by evaluation at $v_0$.
The properness needed for the degrees follows from Lemma~\ref{Lem: support of
maps with at most one puncture}.

Let $E_1,\ldots,E_a$ be the bounded edges incident to $v_0$, oriented away from
$v_0$. Splitting $\bdelta$ along these edges gives a rigid one-vertex type
$\bdelta_0$ with class $\bA(v_0)$ and, when $n=1$, the original outgoing
puncture, and connected rigid branch types $\bdelta_1,\ldots,\bdelta_a$. The two
legs replacing $E_j$ have contact orders $u_{E_j}$ on $\bdelta_0$ and $-u_{E_j}$
on $\bdelta_j$, and are labeled by $E_j$. Note $u_{E_j}\neq 0$ for all $j$ by
rigidity. Since splitting preserves all vertices and their decorations, the
decorated version of \cite[Prop.\,5.17]{ACGS2} gives a Cartesian diagram
expressing compatibility of the maps $\scrM\to\frakM$ with splitting. Write
$v_j$ for the endpoint of $E_j$ of the $j$-th branch $\bdelta_j$, and $h_v$ for
evaluation at the vertex $v$ and its induced homomorphism on lattices.

Every branch has one marked point, so its maps have image in $V(s+1,\tau)$ by
Lemma~\ref{Lem: support of maps with at most one puncture}. The central maps
have image in $Y_i$. Indeed, the argument of that lemma, applied to the
resolution $X\to\ol X$, together with $\bA(v_0)\cdot P_\pm=0$, shows that a
central map not contained in $Y_i$ has class a positive multiple of $\ell$ or of
one of the exceptional classes $e_r$. Since the contact orders $u_{E_j}$ have
zero $P$-component, the degree--contact relation \cite[Cor.\,2.30]{ACGS2} gives
\[
  P\cdot\bA(v_0)=
  \begin{cases}
    k,&n=1,\\
    0,&n=0.
  \end{cases}
\]
The intersection table of \S2 excludes both alternatives when $n=1$. When
$n=0$, it excludes multiples of $\ell$, while $d(\bA(v_0))>0$ excludes the
exceptional classes. Thus $\scrM(X,\bdelta_0)$ is proper by the same stable
reduction argument. The branch stacks need not be proper: covers of
exceptional curves can move along the unbounded strips.

Evaluation at the two markings labeled by $E_j$ takes values in the smooth 
surfaces
$X_{\bsigma(E_j)}$:
\[
  \ev_{0,j}\colon\scrM(X,\bdelta_0)\longrightarrow X_{\bsigma(E_j)},
  \qquad
  \ev_j\colon\scrM(X,\bdelta_j)\longrightarrow X_{\bsigma(E_j)}.
\]
When $n=1$, write $\varepsilon_0\colon\scrM(X,\bdelta_0)\to S_i$ for the
outgoing evaluation composed with $D_i\to S_i$. By \cite[Thm.\,5.1]{Gross}, the
splitting morphism factors over a finite representable morphism $\phi_\delta$
fitting into a commutative diagram of logarithmic stacks
\begin{equation}
\label{Eqn: root schematic gluing}
\begin{CD}
  \scrM(X,\bdelta) @>{\phi_\delta}>>
  \mathscr G_\delta @>{\iota}>>
    \displaystyle\prod_{j=0}^a\scrM(X,\bdelta_j)\\
  @. @VVV @VV{\prod_{j=1}^a(\ev_{0,j},\ev_j)}V\\
  @. \displaystyle\prod_{j=1}^a X_{\bsigma(E_j)} @>{\Delta}>>
    \displaystyle\prod_{j=1}^a
      \bigl(X_{\bsigma(E_j)}\times X_{\bsigma(E_j)}\bigr).
\end{CD}
\end{equation}
Here $\mathscr G_\delta$ is defined by Cartesianity of the square on the
underlying stacks and with the log structure making $\iota$ strict. When $n=1$,
the outgoing evaluation satisfies $\varepsilon= \varepsilon_0\circ
\operatorname{pr}_0\circ\iota \circ\phi_\delta$, where $\operatorname{pr}_0$
projects to the central factor $j=0$.

Consider the tropical gluing homomorphism
\begin{equation}
\label{Eqn: root tropical gluing map}
\begin{aligned}
  \Psi_\delta\colon
  \bigoplus_{j=0}^a\delta_{j,\ZZ}^\gp\oplus\ZZ^a
  &\longrightarrow\bigoplus_{j=1}^a\bsigma(E_j)_\ZZ^\gp,\\
  ((\xi_j),(t_j))
  &\longmapsto
    \bigl(h_{v_0}(\xi_0)-h_{v_j}(\xi_j)+t_ju_{E_j}\bigr)_{j=1}^a.
\end{aligned}
\end{equation}
The face inclusions place the vertex evaluations in the lattices 
$\bsigma(E_j)_\ZZ^\gp$, and
the $\ZZ^a$-summand records the domain edge lengths. Then
$\ker\Psi_\delta=\delta_\ZZ^\gp$ by the definition of basic monoids and
$\delta_\ZZ^\gp\simeq\ZZ$ by rigidity. Since the source and target have ranks
$2a+1$ and $2a$, respectively, $\coker \Psi_\delta$ is finite. Put
\begin{equation}
\label{Eqn: root splitting tropical multiplicity}
  \mu_\delta=|\coker\Psi_\delta|.
\end{equation}
For a closed type this multiplicity depends on the chosen vertex $v_0$ and we
write $\mu_{\delta,v_0}$.

We prove the splitting formula by attaching the branches one at a time. We first
establish flatness of evaluation at the leg used in each gluing.

\begin{lemma}
\label{Lem: single leg flatness at root}
Let $\boldsymbol\eta$ be a type occurring in the successive gluing, and let
$\mathbf L$ be a set of its legs containing a leg $L$ obtained by splitting an 
edge $E_j$. Then the evaluation
morphism
\[
  \ev_L\colon\frakM^{\ev(\mathbf L)}(\cX,\boldsymbol\eta)
  \longrightarrow X_{\bsigma_\eta(L)}
\]
is flat on an open substack containing the image of the structural morphism from
$\scrM(X,\boldsymbol\eta)$.
\end{lemma}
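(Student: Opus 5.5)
The plan is to reduce flatness of $\ev_L$ to a local toric computation on the evaluation stack. By \cite[Def.\,5.14]{ACGS2}, $\frakM^{\ev(\mathbf L)}(\cX,\boldsymbol\eta)$ is the fiber product of $\frakM(\cX,\boldsymbol\eta)$ with the strata $X_{\bsigma_\eta(L')}$ over their Artin-fan images, one for each $L'\in\mathbf L$. The factor for $L$ enters through the smooth morphism $X_{\bsigma_\eta(L)}\to\cX_{\bsigma_\eta(L)}$, pulled back along the underlying evaluation $\frakM(\cX,\boldsymbol\eta)\to\cX_{\bsigma_\eta(L)}$. Hence $\ev_L$ is flat exactly where the underlying Artin-fan evaluation $\frakM(\cX,\boldsymbol\eta)\to\cX_{\bsigma_\eta(L)}$, restricted to the locally closed stratum, is flat. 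This reduces the problem to a combinatorial statement about the tropical evaluation map at $L$.

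Second, I will use the smooth local charts of \cite[Rem.\,3.27, Prop.\,3.24]{ACGS2}. Near a point of actual type $\boldsymbol\eta'$ refining $\boldsymbol\eta$, the stack is smooth over $\Spec\CC[F]$, with $F$ the face of $Q_{\eta'}$ complementary to the marking ideal. The evaluation at $L$ is then induced by the monoid homomorphism $h_{L}^*\colon P_{\bsigma(L)}\to Q_{\eta'}$, composed to $F$. In our situation $\bsigma(E_j)$ is the two-dimensional cone over a boundary edge of $B_0$, and $X_{\bsigma(E_j)}$ is a smooth surface.

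The key input is that on the relevant open substack the tropical evaluation $h_L$ is surjective onto $\bsigma_\eta(L)$, or at least onto a neighborhood of the image. I would take this open substack to be the complement of the loci where the attaching vertex of $L$ specializes further, for example to a corner strictly smaller than $\bsigma(L)$. I would then argue that it contains the image of $\scrM(X,\boldsymbol\eta)$. Here I use the structure established above: every vertex of the pieces maps to a boundary vertex, bounded edges lie on boundary edges, and the leg $L$ has contact order $\pm u_{E_j}\neq0$, a nonzero multiple of some $u_{e_r}$ tangent to the boundary edge. Consequently the leg together with its vertex position sweeps out the full cone $\bsigma(E_j)$: the vertex gives the ray $\check D_i$-direction, and the contact order $u_{E_j}$ provides the edge direction. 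With $h_L$ dominant, the induced map of toric charts $\Spec\CC[F]\to\Spec\CC[P_{\bsigma(L)}]$ is a dominant torus-equivariant map of affine toric varieties of the appropriate type. Flatness then follows from the criterion that a toric morphism is flat when the image of each face meets correctly. Concretely, I would check that $P_{\bsigma(L)}\to F$ makes $\CC[F]$ a free $\CC[P]$-module, using that $P\simeq\NN^2$ (smooth cone) and that the images of the two generators are part of a monoid splitting of $F$. Freeness over a polynomial ring in two variables can be checked via regularity of the sequence given by the two images.

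The main obstacle is this freeness step: showing that the two generators of $P_{\bsigma(L)}$ map to a regular sequence in $\CC[F]$ at every point of the image of $\scrM(X,\boldsymbol\eta)$. Equivalently, no component of the preimage of a toric stratum of $X_{\bsigma(L)}$ has excess dimension. I would first attempt this by using that all other edges have contact order in $\bigcup\ZZ u_{e_i}$ (Proposition~\ref{Prop: shape of wall types}), so that the edge-length and vertex-position coordinates decouple into a product. I would then invoke miracle flatness, comparing the dimensions $2-\dim\eta$ from Lemma~\ref{Lem: wall incidence divisor}'s dimension count. If that fails, the fallback is to shrink to the open substack where the vertex of $L$ sits at the interior of its expected corner stratum, and verify that $\scrM(X,\boldsymbol\eta)$ maps there via Lemma~\ref{Lem: support of maps with at most one puncture}.
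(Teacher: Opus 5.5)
Your first reduction matches the paper: base change along the smooth morphism $X_{\bsigma(L)}\to\cX_{\bsigma(L)}$, together with smoothness of the remaining evaluation lifts. The core flatness step, however, has a genuine gap.

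\textbf{Wrong local model.} $\ev_L$ lands in the closed stratum $X_{\bsigma_\eta(L)}$. Near a point whose actual type $\eta'$ puts $L$ in a cone $\sigma'=\bsigma_{\eta'}(L)\supseteq\bsigma_\eta(L)$, the coordinates on this stratum are the monomials of $P_{\sigma'}\cap\bsigma_\eta(L)^\perp$. They are not the two generators of $P_{\bsigma(L)}\simeq\NN^2$. Every nonzero element of $P_{\bsigma(L)}$ either pairs nontrivially with $u_L$ or pulls back into the marking ideal, so its pullback along the puncture vanishes identically on $\Spec\CC[F]$. There is therefore no regular sequence to find. Dominance of the tropical leg map onto $\bsigma(L)$ is true but not the relevant criterion.

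\textbf{The open substack cannot work.} Excluding further specialization of the leg does not give an open substack containing the image of $\scrM(X,\boldsymbol\eta)$. Maps whose marking labeled $E_j$ lies at $F_j\cap S_i$, for instance with a component mapped into $S_i$, have $\bsigma_{\eta'}(L)$ equal to the three-dimensional cone over a triangle of $B_0$. These are exactly the loci the paper only disposes of later, via $\sE$ and localization, in Lemma~\ref{Lem: negative section vanishing}. So flatness must be proved at such points, not avoided.

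\textbf{The missing ingredient.} You need the dimension criterion of \cite[Thm.\,5.3]{Gross}: $\ev_L$ is flat provided every retained realizable refinement $\eta'\to\eta$ satisfies $\dim\eta'-\dim\eta\geq\dim\bsigma_{\eta'}(L)-\dim\bsigma_\eta(L)$. The paper first uses \cite[Rem.\,5.4]{Gross} to keep only strata whose types are contractions of types occurring in $\scrM(X,\boldsymbol\eta)$. By the support argument (images in $V(s+1,\tau)$), these lie in the cone over $B_0$, so $\dim\bsigma_{\eta'}(L)\le3$. Since $\eta$ is a ray, the case $\dim\bsigma_{\eta'}(L)=3$ forces $\eta$ to be a proper face of $\eta'$. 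Hence $\dim\eta'\ge2$ and the inequality holds. What this restriction really removes are the four-dimensional cones through the vertices of $P_\pm$, where the defect would be $2$.

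\textbf{How to repair your chart argument.} If you prefer to stay with charts, the same reasoning fixes it. Once $\dim\sigma'\le3$, the target is locally $B\GG_m^2$ or $B\GG_m^2\times[\AA^1/\GG_m]$. In the second case the single relevant monomial $h_v^*(m)$, with $v$ the vertex carrying $L$ and $m$ the generator of $P_{\sigma'}\cap\bsigma_\eta(L)^\perp$, lies in $F$ because it vanishes on $\eta$. A nonzero monomial in the domain $\CC[F]$ is a nonzerodivisor, and this gives flatness over the regular one-dimensional base.
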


\begin{proof}
Splitting $\boldsymbol \eta$ along its edges incident to $v_0$ recovers the
central type and the attached branch types. The support argument above therefore
puts every such map in $V(s+1,\tau)$. By \cite[Rem.\,5.4]{Gross}, we may
restrict $\frakM(\cX,\eta)$ to the open substack whose strata correspond to
contractions of types occurring in $\scrM(X,\boldsymbol\eta)$. The retained
types lie in the cone over $B_0$.

Now $\eta$ is a ray and $\bsigma_\eta(L)$ has dimension two. For a retained
realizable refinement $\eta'\to\eta$, the dimension of $\bsigma_{\eta'}(L)$ is
two or three. In the latter case, $\eta$ is a proper face of $\eta'$, so
$\dim\eta'\geq2$. Hence, in either case,
\[
  \dim\eta'\geq \dim\eta + \dim\bsigma_{\eta'}(L)-\dim\bsigma_\eta(L).
\]
Apply \cite[Thm.\,5.3]{Gross} to evaluation at $L$ and base change along
$X_{\bsigma_\eta(L)}\to\cX_{\bsigma_\eta(L)}$. This proves flatness with only
the evaluation at $L$ lifted. Forgetting the remaining evaluation lifts is
smooth, since each $X_{\bsigma_\eta(L')}\to\cX_{\bsigma_\eta(L')}$ is smooth.
Thus evaluation at $L$ remains flat after adding these lifts.
\end{proof}

\begin{theorem}
\label{Thm: root virtual gluing}
The stack $\mathscr G_\delta$ is proper, and
\begin{equation}
\label{Eqn: root virtual splitting identity}
  (\phi_\delta)_*[\scrM(X,\bdelta)]^\virt
  =\mu_\delta\cdot\Delta^!\Bigl(\prod_{j=0}^a[\scrM(X,\bdelta_j)]^\virt\Bigr)
  \qquad\text{in }A_n(\mathscr G_\delta)_\QQ.
\end{equation}
For $a=0$, set $\mu_\delta=1$ and interpret $\Delta^!$ as the identity.
\end{theorem}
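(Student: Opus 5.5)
We prove the theorem by induction on the number $a$ of branches, attaching one branch at a time. This is the approach announced before Lemma~\ref{Lem: single leg flatness at root}.

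\emph{Properness.} Properness of $\mathscr G_\delta$ does not follow from properness of the factors, since the branch stacks $\scrM(X,\bdelta_j)$ need not be proper. Instead we use the fibre product. $\scrM(X,\bdelta_0)$ is proper, as shown above, so the evaluations $\ev_{0,j}$ have compact image. Through the diagonal condition, this bounds the attaching points $\ev_j$ of the branches to a compact subset of $X_{\bsigma(E_j)}$. Each branch has one marked point, so by Lemma~\ref{Lem: support of maps with at most one puncture} its image lies in $V(s+1,\tau)$. The only non-proper directions are covers of the exceptional curves $e_r$ sliding along the unbounded strips, and these are exactly the directions in which the attaching point moves off to infinity along a strip. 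Fixing the attaching point in a compact set therefore cuts the branch moduli down to a proper space, and the stable reduction argument of \cite[Cor.\,3.19]{ACGS2} applies to the fibre product. Finiteness of $\phi_\delta$ then makes $\scrM(X,\bdelta)$ proper as well, which is consistent with Lemma~\ref{Lem: support of maps with at most one puncture}.

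\emph{Virtual identity.} Order the branches and let $\boldsymbol\eta_r$ be the type obtained by gluing $\bdelta_0$ with $\bdelta_1,\dots,\bdelta_r$ along $E_1,\dots,E_r$. Each step glues the two-leg-split type $\boldsymbol\eta_{r-1}\sqcup\bdelta_r$ along a single edge $E_r$. For one gluing edge we apply the gluing formula of \cite[Thm.\,5.1, Thm.\,5.3]{Gross}, whose hypothesis is flatness of the evaluation at one of the two legs. This is exactly Lemma~\ref{Lem: single leg flatness at root}, applied to $\boldsymbol\eta_{r-1}$ with $L$ the leg from $E_r$ and with the evaluations at the remaining legs $E_{r+1},\dots,E_a$ retained. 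The single-edge formula gives
\[
  (\phi_{r})_*[\scrM(X,\boldsymbol\eta_r)]^\virt=\mu_r\,\Delta_r^!\bigl([\scrM(X,\boldsymbol\eta_{r-1})]^\virt\times[\scrM(X,\bdelta_r)]^\virt\bigr),
\]
where $\mu_r$ is the cokernel order of the one-edge tropical gluing map $\eta_{r-1,\ZZ}^\gp\oplus\delta_{r,\ZZ}^\gp\oplus\ZZ\to\bsigma(E_r)^\gp_\ZZ$. By rigidity $\eta_{r-1}$ and $\eta_r$ are rays, so the kernel has rank one and the cokernel is finite. Compatibility of refined Gysin maps with proper pushforward and with products then composes these steps. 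The diagonals $\Delta_r$ combine to $\Delta$, and the successive fibre products assemble to $\mathscr G_\delta$. The successive $\phi_r$ compose to $\phi_\delta$ after base change along finite morphisms, using the Cartesian compatibility \cite[Prop.\,5.17]{ACGS2}. The case $a=0$ is trivial.

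\emph{Multiplicity.} It remains to show $\prod_r\mu_r=\mu_\delta$. The maps $\Psi_\delta$ in \eqref{Eqn: root tropical gluing map} form a filtered system: restricting $\Psi_\delta$ to the first $r$ branches gives $\Psi_{\eta_r}$, whose kernel is $\eta_{r,\ZZ}^\gp$. Adding branch $r$ gives a short exact sequence of gluing complexes. In it, the new source summand $\delta^\gp_{r,\ZZ}\oplus\ZZ$ maps to the new target $\bsigma(E_r)^\gp_\ZZ$, with the old kernel $\eta_{r-1,\ZZ}^\gp$ entering through $h_{v_0}$. A snake-lemma computation then gives $|\coker\Psi_{\eta_r}|=|\coker\Psi_{\eta_{r-1}}|\cdot\mu_r$, since all kernels have rank one and all cokernels are finite. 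I expect this lattice bookkeeping to be the main obstacle, together with checking that the basic monoid of each intermediate glued type is saturated in the required way, so that the degree of $\phi$ is correctly accounted for by $\mu$ and not by an extra torsion factor. To address it, I would first check that the inclusions $\eta_{r-1,\ZZ}^\gp\to\eta_{r,\ZZ}^\gp$ are isomorphisms after restriction to $v_0$.
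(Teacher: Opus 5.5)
Your proposal is correct. The inductive gluing and the multiplicity count match the paper's proof: one edge at a time using the flatness of Lemma~\ref{Lem: single leg flatness at root}, the one-edge multiplicity $|\coker\Phi_j|$, and the exact sequence $0\to\coker\Phi_j\to\coker\Psi_j\to\coker\Psi_{j-1}\to0$, which only uses $\ker\Psi_{j-1}=\widetilde\delta^\gp_{j-1,\ZZ}$. One citation correction: the degree of each enhancement morphism is \cite[Thm.\,5.5]{Gross}, while Thm.\,5.3 there is the flatness criterion behind the lemma.

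The genuine difference is properness. The paper obtains it last. The enhancement morphisms are finite and dominant on the auxiliary opens, hence surjective. So $\phi_\delta$ is a finite surjection from $\scrM(X,\bdelta)$, which is proper by Lemma~\ref{Lem: support of maps with at most one puncture}, onto a separated target of finite type, and that target is therefore proper.

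You work on the fibre product directly. Stated cleanly, $\mathscr G_\delta\to\scrM(X,\bdelta_0)$ is a base change of $\prod_j\ev_j$, and $\scrM(X,\bdelta_0)$ is proper. Each $\ev_j$ is proper because the only non-proper branch families are covers of $e_r$-fibres sliding along the strips, and their attaching points leave every compact subset of $X_{\bsigma(E_j)}$. This is the same properness of $\ev_j$ that the paper states right after the theorem when defining $B_j$.

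What each route buys: yours does not need surjectivity of $\phi_\delta$, and it yields properness of $\scrM(X,\bdelta)$ as a consequence of finiteness. The paper's route needs no analysis of the non-proper branch components at this point.

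Finally, the extra check you propose on $\eta^\gp_{r-1,\ZZ}$ versus $\eta^\gp_{r,\ZZ}$ is unnecessary, and it is false in general. Restriction embeds $\eta^\gp_{r,\ZZ}$ into $\eta^\gp_{r-1,\ZZ}$ with finite index. That index is already $w$ when a one-vertex branch is glued to the one-vertex central type along an edge of weight $w>1$. The index is accounted for in $|\coker\Phi_r|$, so no separate saturation issue arises.
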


\begin{proof}
Glue the branches in order, and write $\widetilde{\bdelta}_j$ for the type
obtained after attaching the first $j$ branches. At the $j$-th step we obtain 
$\mathscr G_j$ as the fiber product and the finite representable morphism
\[
\phi_j: \scrM(X,\widetilde\bdelta_j)\lra \mathscr G_j.
\]
The tropical gluing homomorphism is
\[
\begin{aligned}
  \Phi_j\colon
  \widetilde\delta_{j-1,\ZZ}^\gp\oplus\delta_{j,\ZZ}^\gp\oplus\ZZ
  &\longrightarrow\bsigma(E_j)_\ZZ^\gp,\\
  (\xi,\xi_j,t)
  &\longmapsto h_{v_0}(\xi)-h_{v_j}(\xi_j)+tu_{E_j}.
\end{aligned}
\]
The two vertex evaluations have linearly independent images, since $u_{E_j}\neq
0$ implies that the endpoints lie on distinct rays. Thus $\Phi_j$ has finite
cokernel, and the gluing is tropically transverse. Put $\nu_j=|\coker\Phi_j|$.

Let $\Psi_j$ be \eqref{Eqn: root tropical gluing map} restricted to the first
$j$ branches. Since $\ker\Psi_{j-1}= \widetilde\delta_{j-1,\ZZ}^\gp$,
projection onto the first $j-1$ edge summands gives an exact sequence
\[
  0\longrightarrow\coker\Phi_j
  \longrightarrow\coker\Psi_j
  \longrightarrow\coker\Psi_{j-1}
  \longrightarrow0.
\]
As $\coker\Psi_0=0$, this proves
\begin{equation}
\label{Eqn: successive root gluing multiplicity}
  \mu_\delta=\prod_{j=1}^a\nu_j.
\end{equation}

On the auxiliary opens of Lemma~\ref{Lem: single leg flatness at root},
\cite[Thm.\,5.5]{Gross} gives a dominant enhancement morphism of degree $\nu_j$
on the evaluation stacks. By \cite[Thm.\,5.1]{Gross}, this dominant morphism is
finite and representable, hence surjective. The enhancement morphism $\phi_j$ of
Deligne-Mumford stacks is its base change. Compatibility of the obstruction
theories, by \cite[Thm.\,5.19]{ACGS2} and \cite[Rem.\,5.2]{Gross}, gives
\[
  (\phi_j)_*[\scrM(X,\widetilde{\bdelta}_j)]^\virt
  =\nu_j\cdot \Delta_j^!\Bigl(
    [\scrM(X,\widetilde{\bdelta}_{j-1})]^\virt
    \times[\scrM(X,\bdelta_j)]^\virt\Bigr),
\]
where $\Delta_j$ is the diagonal of $X_{\bsigma(E_j)}$.

Retain the evaluations at the legs still to be glued at each step. After the 
corresponding
base changes, the enhancement morphisms compose to $\phi_\delta$. Proper
pushforward commutes with refined pullback, the diagonal pullbacks compose to
$\Delta^!$, and their coefficients multiply to $\mu_\delta$. This proves
\eqref{Eqn: root virtual splitting identity}. The same argument makes
$\phi_\delta$ finite and surjective. Since its source is proper and its target
is separated and of finite type, $\mathscr G_\delta$ is proper.
\end{proof}

We next turn the diagonal product into numbers. Put
\begin{equation}
\label{Eqn: F_j,S,F}
  F_j=Y_i\cap X_{\bsigma(E_j)},\qquad
  S=\prod_{j=1}^a X_{\bsigma(E_j)},\qquad F=\prod_{j=1}^a F_j,
\end{equation}
and write $i_j\colon F_j\hookrightarrow X_{\bsigma(E_j)}$ and $i_F\colon
F\hookrightarrow S$ for the inclusions. Each $F_j$ is a Cartier divisor in
$X_{\bsigma(E_j)}$ isomorphic to $\PP^1$.

The branch evaluations $\ev_j$ are proper. Indeed, the only non-compact
$\scrM(X,\bdelta_j)$ have underlying stable maps multiple covers of curves of
the classes $e_1,e_2,e_3$ of the ruling of the surfaces $\PP^1\times\AA^1$ given
by the infinite strips in Figure~\ref{Fig: toric model}, and properness of
the evaluation to the double locus $X_{\bsigma(E_j)}$ is immediate in this case.
We may therefore define
\begin{equation}
\label{Eqn: branch evaluated class}
  B_j=(\ev_j)_*[\scrM(X,\bdelta_j)]^\virt\in A_1(X_{\bsigma(E_j)})_\QQ,
  \qquad
  b_j=\deg i_j^!B_j.
\end{equation}
The degree is taken in $A_0(F_j)$, on the proper curve $F_j$. It includes any
self-intersection contribution from components of $B_j$ supported on $F_j$, as
well as contributions from moving strip families.

Let $e_0\colon(\scrM(X,\bdelta_0))_{\mathrm{red}}\to F$ be the central
evaluation map. Define the numbers $R_{\bdelta_0}$, $R_{\bdelta_0}(q)$ computing
the respective contribution of $\scrM(X,\bdelta_0)$ by
\begin{equation}
\label{Eqn: central evaluated classes}
\begin{aligned}
  (e_0)_*[\scrM(X,\bdelta_0)]^\virt
    &=R_{\bdelta_0}[F] &&(n=0),\\
  (e_0)_*\bigl(\varepsilon_0^*[q]\cap
    [\scrM(X,\bdelta_0)]^\virt\bigr)
    &=R_{\bdelta_0}(q)[F] &&(n=1).
\end{aligned}
\end{equation}
These equalities uniquely define the numbers since $A_a(F)_\QQ=\QQ[F]$.

\begin{proposition}
\label{Prop: numerical splitting at a vertex}
With the preceding conventions, the closed and punctured contributions satisfy
\begin{align}
\label{Eqn: closed numerical reduction}
  N_{\bdelta}
  &=\frac{m_\delta\mu_\delta}{|\Aut(\bdelta)|}
    R_{\bdelta_0}\prod_{j=1}^a b_j,\\
\label{Eqn: root numerical reduction}
  J_{\bdelta}(q)
  &=\frac{m_\delta\mu_\delta}{|\Aut(\bdelta)|}
    R_{\bdelta_0}(q)\prod_{j=1}^a b_j.
\end{align}
Empty products have value one.
\end{proposition}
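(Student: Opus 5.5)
Starting from Theorem~\ref{Thm: root virtual gluing}, we push \eqref{Eqn: root virtual splitting identity} forward to a point, in the case $n=1$ after first capping with $\varepsilon^*[q]$. Since $\varepsilon=\varepsilon_0\circ\operatorname{pr}_0\circ\iota\circ\phi_\delta$, the projection formula lets the point class be pulled back through $\phi_\delta$. We get $\deg(\varepsilon^*[q]\cap[\scrM(X,\bdelta)]^\virt)=\mu_\delta\deg\Delta^!\bigl((\varepsilon_0^*[q]\cap[\scrM(X,\bdelta_0)]^\virt)\times\prod_{j\ge1}[\scrM(X,\bdelta_j)]^\virt\bigr)$, and for $n=0$ the same without the cap. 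Here we use that $\mathscr G_\delta$ is proper, so degrees are defined on it, and that refined Gysin maps commute with capping by Cartier pullbacks. Multiplying by $m_\delta/|\Aut(\bdelta)|$ and comparing with \eqref{Eqn: point constrained delta contribution} and \eqref{Eqn: closed delta contribution}, it remains to show that the degree of the diagonal pullback equals $R_{\bdelta_0}(q)\prod_j b_j$, respectively $R_{\bdelta_0}\prod_j b_j$.

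For this we push the refined intersection forward to $S\times S$ and write it as $\Delta^!$ applied to the exterior product of the evaluated classes. By compatibility of refined Gysin maps with proper pushforward, the degree equals $\deg\Delta^!\bigl((\ev_0)_*C_0\times\prod_j B_j\bigr)$. Here $C_0$ is the possibly $q$-capped central virtual class, and $(\ev_0)_*C_0\in A_a(S)$. Using the projection $\scrM(X,\bdelta_0)\to\prod_j X_{\bsigma(E_j)}$, the class $(\ev_0)_*C_0$ is supported on $F$: central maps lie in $Y_i$, so each evaluation lands in $F_j$. Thus $(\ev_0)_*C_0=(i_F)_*(e_0)_*C_0=R\,(i_F)_*[F]$ by \eqref{Eqn: central evaluated classes}, where $A_a(F)_\QQ=\QQ[F]$ because $F\simeq(\PP^1)^a$.

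We then compute $\Delta^!\bigl((i_F)_*[F]\times\prod_jB_j\bigr)$. The diagonal of $S$ is a product of the diagonals of the smooth surfaces $X_{\bsigma(E_j)}$. The standard identity $\Delta^!(i_*\alpha\times\beta)=i_*(\alpha\cdot i^!\beta)$ holds for a regular embedding $i$ of a Cartier divisor, via the excess/compatibility formulas in Fulton, Ch.~6. Applied factorwise, it reduces the computation to $\prod_j\deg i_j^!B_j$ with $\alpha=[F_j]$. By \eqref{Eqn: branch evaluated class} this equals $\prod_j b_j$, and the fundamental class of $F$ contributes the factor $R$. The empty-product case $a=0$ is immediate, since $\Delta^!$ is then the identity and the statement reduces to the definition of $R$.

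The main obstacle is justifying that degrees can be computed after pushforward, because the branch stacks $\scrM(X,\bdelta_j)$ need not be proper. Only the evaluations $\ev_j$ are proper, and only the glued space $\mathscr G_\delta$ is proper. I would handle this by working with refined intersections supported on the proper fiber product $\mathscr G_\delta$. Proper pushforward along $\mathscr G_\delta\to F\times_S(\text{images})$ is then legitimate, and the Gysin map $i_j^!$ can be taken with supports in the proper curve $F_j$. This is exactly why $b_j$ was defined through $i_j^!B_j\in A_0(F_j)$, which also captures the self-intersection terms from components of $B_j$ contained in $F_j$.
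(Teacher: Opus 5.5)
Your argument is correct and is essentially the paper's proof. The paper likewise caps with $\varepsilon_0^*[q]$ on the central factor via the projection formula and pushes forward along $e_0$ and the $\ev_j$. It then uses that the pullback of $\Delta$ along $i_F\times 1$ is the graph $\Gamma_{i_F}$, of the same codimension (Fulton, Thm.~6.2, so no excess term), to get $R_{\bdelta_0}\deg i_F^!(B_1\times\cdots\times B_a)=R_{\bdelta_0}\prod_j b_j$. Working on $F\times S$ rather than $S\times S$ keeps the refined class on the proper $F$ from the start. That is the refined form of your identity $\Delta^!(i_*\alpha\times\beta)=i_*(\alpha\cdot i^!\beta)$, and it settles the properness issue you raise.
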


\begin{proof}
In the closed case, consider
\[
\begin{aligned}
  \gamma
    &=\Delta^!\bigl([\scrM(X,\bdelta_0)]^\virt
      \times\beta_{\mathrm b}\bigr),\\
  \beta_{\mathrm b}
    &=[\scrM(X,\bdelta_1)]^\virt\times\cdots
      \times[\scrM(X,\bdelta_a)]^\virt.
\end{aligned}
\]
The pullback of $\Delta$ along $i_F\times1$ is the graph $\Gamma_{i_F}=(1\times
i_F)\circ\Delta_F$, a regular embedding of the same codimension. Compatibility
of refined Gysin pullback with liftings of regular embeddings
\cite[Thm.\,6.2]{Fulton} therefore identifies $\Delta^!$ with $\Gamma_{i_F}^!$
on the common fiber product. Commuting proper pushforward with refined pullback
and using \eqref{Eqn: central evaluated classes} gives
\[
\begin{aligned}
  \deg\gamma
    &=\deg\Gamma_{i_F}^!\bigl((e_0)_*[\scrM(X,\bdelta_0)]^\virt
      \times(B_1\times\cdots\times B_a)\bigr)\\
    &=R_{\bdelta_0}\deg i_F^!(B_1\times\cdots\times B_a)
   =R_{\bdelta_0}\prod_{j=1}^a b_j.
\end{aligned}
\]
Here the degrees are taken on $F$. In the punctured case, insert
$\varepsilon_0^*[q]$ on the central factor. The same calculation, using the
projection formula, replaces $R_{\bdelta_0}$ by $R_{\bdelta_0}(q)$. The two
identities follow from Theorem~\ref{Thm: root virtual gluing} and the
definitions of $N_{\bdelta}$ and $J_{\bdelta}(q)$ in \eqref{Eqn: closed delta 
contribution} and \eqref{Eqn: point constrained delta contribution}.
\end{proof}

The factors in these formulas retain the labels $E_j$ on the legs obtained by 
splitting. Let
$\Aut(\bdelta,v_0)$ be the subgroup fixing the distinguished vertex and
$\Pi_{\bdelta,v_0}$ its permutation group on the edges $E_1,\ldots,E_a$. There is 
an exact
sequence
\[
  1\longrightarrow\prod_{j=1}^a\Aut(\bdelta_j)
  \longrightarrow\Aut(\bdelta,v_0)
  \longrightarrow\Pi_{\bdelta,v_0}\longrightarrow 1.
\]
The central one-vertex type has no automorphism fixing all its labeled legs. In
the punctured case the original leg already distinguishes $v_0$, so
$\Aut(\bdelta,v_0)=\Aut(\bdelta)$. Stable-map automorphisms are incorporated in
the virtual classes throughout.

For the comparison in \S5, sum over choices of the distinguished vertex. Recall
that $H$ is the pull-back to $X$ of the hyperplane class in $\PP^2$ and
$d(A)=A\cdot H$. Additivity of this $H$-degree and the orbit-stabilizer theorem
give
\[
  \sum_{[v_0]\in V(G)/\Aut(\bdelta)}
  \frac{d(\bA(v_0))}{|\Aut(\bdelta,v_0)|}
  =\frac{d}{|\Aut(\bdelta)|}.
\]
Terms with $d(\bA(v_0))=0$ disappear. Thus
\begin{equation}
\label{Eqn: vertex weighted closed splitting}
  dN_d
  =\sum_{[(\bdelta,v_0)]}
    \frac{m_\delta\mu_{\delta,v_0}\,d(\bA(v_0))}
         {|\Aut(\bdelta,v_0)|}
    R_{\bdelta_0}\prod_{j=1}^a b_j,
\end{equation}
where the sum is over isomorphism classes of pairs $(\bdelta,v_0)$ such that
$\bdelta$ is a rigid decorated closed type supported on $\partial B_0$, with
total class $d\beta$, and $d(\bA(v_0))>0$.

Finally, adding at $v_0$ an outgoing leg of contact order $-k\be_i$
decorated by the cone spanned by $\check P,\check D_i$, and replacing
$\bA(v_0)$ by $\bA(v_0)+kf_i$, gives a relevant punctured type $\bdelta(k)$.
All branch types and their evaluated classes remain the same. The vertex and
bounded-edge data determining the basic monoid are unchanged, so
\begin{equation}
\label{Eqn: closed punctured gluing data}
  m_{\delta(k)}=m_\delta,\qquad
  \mu_{\delta(k)}=\mu_{\delta,v_0},\qquad
  \Aut(\bdelta(k))\simeq\Aut(\bdelta,v_0).
\end{equation}
The branch numbers $b_j$ therefore agree term by term. Equations \eqref{Eqn:
root numerical reduction} and \eqref{Eqn: vertex weighted closed splitting} thus
reduce the comparison between $N_d$ and the $W_{d,-k\be_i}$ for $k>0$ to the
central numbers $R_{\bdelta_0(k)}(q)$ and $d(\bA(v_0))R_{\bdelta_0}$.
That these pairs exhaust the nonzero punctured contributions is proved in
Remark~\ref{Rem: exhaustion of punctured central factors} and the proof of
Corollary~\ref{Cor: wall versus closed comparison}.


\section{Localization on the rational ruled surfaces $Y_i$ and polynomiality}
\label{Sec: Localization on Y_i}

We compare the central invariants $R_{\bdelta_0}$ and $R_{\bdelta_0(k)}(q)$ of
\eqref{Eqn: central evaluated classes}, retaining the one-vertex decorated types
$\bdelta_0$, $\bdelta_0(k)$ and the notation of \S\ref{SS: cutting at the last
vertex}. We assume $d_0=d(A)>0$, where $A=\bA(v_0)$. In \S\ref{SS: polynomiality
and assembly}, the comparison will verify Assumption~\ref{Ass: wall
polynomiality} and identify the constant terms with closed invariants; the case
$d_0=0$ in positive total degree is also treated there.

Write $Y$ for the degeneration of $\PP^2$ in Proposition~\ref{Prop: Mumford
degeneration}, and $S_i=P\cap Y_i$ for the negative section of $Y_i$. Let
$a_i\colon Y_i\to S_i$ be the projection and $H_i\subset Y_i$ the strict
transform of the toric positive section of $\FF_3$ in the Mumford model $\PP_\Xi$.
Put $\breve S_i=S_i\setminus\bigcup_{j\ne i}D_j$ and fix a general point
$q\in\breve S_i$ throughout this section. Write $F_q=a_i^{-1}(q)$ and
$y_q=F_q\cap H_i$. As in \S\ref{SS: cutting at the last vertex}, $\varepsilon$
denotes outgoing evaluation followed by projection to $S_i$.

As before, let $\cX$ be the Artin fan of $X$, and put
\[
\begin{aligned}
  \scrM_0&=\scrM(X,\bdelta_0),
  &\frakM_0&=\frakM(\cX,\bdelta_0),\\
  \scrM_k&=\scrM(X,\bdelta_0(k)),
  &\frakM_k&=\frakM(\cX,\bdelta_0(k))\qquad(k>0).
\end{aligned}
\]
We also use $\scrM_0^+=\scrM(X,\bdelta_0^+)$ and
$\frakM_0^+=\frakM(\cX,\bdelta_0^+)$, where $\bdelta_0^+$ is obtained from
$\bdelta_0$ by adding a leg $L_p$ of contact order zero, corresponding to a new
marked point $p$. Using the factorization through $Y_i$ proved in \S\ref{SS: 
relative localization},
write
\begin{equation}
\label{Eqn: ev_p}
  \ev_p\colon\scrM_0^+\to Y_i,\qquad
  \varepsilon^+=a_i\circ\ev_p\colon\scrM_0^+\to S_i,\qquad
  \varepsilon\colon\scrM_k\to S_i\quad(k>0).
\end{equation}
Here $\ev_p$ evaluates at $p$, while $\varepsilon$ is outgoing evaluation followed
by projection to $S_i$. The stacks $\frakM_0$, $\frakM_k$, and $\frakM_0^+$
parametrize maps to $\cX$ with all marked points retained.

Use the one-parameter subgroup $T=\GG_m$ of the toric model fixing $\sigma$ and
$\tau$ and given by $-\be_i$ in the coordinates of $B_0$, hence by $(-1,0)$ for
$i=1$. It fixes $S_i$ and $H_i$ pointwise and acts along the fibers of $a_i$. We
write
\[
  \hbar=c_1^T(T_qF_q)
\]
for the equivariant first Chern class of the tangent line to $F_q$ at $q$.
Thus the tangent weights at $q$ and $y_q=F_q\cap H_i$ are $\hbar$ and
$-\hbar$, respectively. On a degree-$k$ cover of $F_q$ fully
ramified over these points, the tangent weight over $y_q$ is $-\hbar/k$.

We use the relative virtual localization theorem of Graber and Martin
\cite{GraberMartin}, since the Artin stack bases need not be smooth
(Remark~\ref{Rem: singular localization base}). For each moduli stack, replace
$T$ by a finite cover so that invariant points lift to its fixed stack, as in
\cite[Prop.\,4.11]{GraberMartin}. We keep the notation $T$ and identify the
rational equivariant Chow groups under this replacement.

An object of $\scrM_k^T$ over a scheme $W$ is an object of $\scrM_k(W)$ with
compatible $T$-actions on its logarithmic domain and base, making the map to $X$
equivariant and inducing the trivial action on $W$. The action on the base log
structure need not be trivial. The same description applies to our other
logarithmic moduli stacks.

\subsection{Normal excess and vanishing}
\label{SS: relative localization}

We first give equivariant expressions for the central numbers $R_{\bdelta_0}$
and $R_{\bdelta_0(k)}(q)$. Recall the fiber components $F_j=Y_i\cap
X_{\bsigma(E_j)}$ of $a_i\colon Y_i\to S_i$ introduced in \eqref{Eqn: F_j,S,F}.
Let $e_{0,j}=\operatorname{pr}_j\circ e_0$ be the corresponding evaluation map
to $F_j$, and let $z_j\in F_j$ be the $T$-fixed point distinct from $F_j\cap
S_i$. Endow $\O_{F_j}(z_j)$ with the $T$-linearization for which its canonical
section is invariant. We write
\[
  [z_j]_T=c_1^T(\O_{F_j}(z_j))\in A_T^1(F_j)
\]
for the equivariant Chow cohomology class of $z_j$. With the same notation for
the evaluation classes on the three moduli stacks, put
\begin{equation}
\label{Eqn: central number with cut insertions}
  \sE= \prod_{j=1}^a e_{0,j}^*[z_j]_T,\quad
  R_{\bdelta_0} =\deg\bigl(\sE\cap[\scrM_0]_T^\virt\bigr),\quad
  R_{\bdelta_0(k)}(q) =\deg\bigl((\varepsilon^*[q]\cup\sE)
                   \cap[\scrM_k]_T^\virt\bigr).
\end{equation}
We set $\sE=1$ for $a=0$. The degrees are proper equivariant pushforwards to
$A_0^T(\operatorname{pt})_\QQ=\QQ$. Since this section is nonvanishing at
$F_j\cap S_i$, the fiber there has trivial $T$-character. Thus $[z_j]_T$
restricts to zero at $F_j\cap S_i$, and $\sE$ makes a fixed contribution zero
whenever a marking labeled by $E_j$ maps into $S_i$.

Let $\scrM\to\frakM$ denote the structural morphism for any of these cases, and
let $\pi\colon\cC\to\scrM$ and $f\colon\cC^\circ\to X$ be the universal curve
and punctured map. The relative perfect obstruction theory of \cite[(4.3)--(4.4)
and Prop.\,4.2]{ACGS2} is the morphism
\[
  \phi_X\colon\EE_X=(R\pi_*f^*\Theta_X)^\vee
    \longrightarrow L_{\scrM/\frakM}.
\]

Use the line bundle $N_{Y/X}\simeq\O_Y(-D_\infty)$ and projection $p: \PP_\Xi=
N_{Y|X}\to Y$ of Proposition~\ref{Prop: Mumford degeneration}. For the toric
blowdown $b\colon Y_i\to\FF_3$ and toric positive section $H_+\subset\FF_3$, we
have $D_\infty|_{Y_i}=b^*H_+$, and hence
\begin{equation}
\label{Eqn: flap normal line}
  N_{Y|X}^\vee|_{Y_i}
    \simeq b^*\O_{\FF_3}(H_+)
    \simeq\O_{Y_i}(S_i+3F_q).
\end{equation}
In particular, $N_{Y|X}^\vee|_{Y_i}$ is nef and has degree one on each fiber
component meeting $S_i$. The section defining $D_\infty$ is $T$-invariant and
nonvanishing along $S_i$, giving an equivariant trivialization of
$N_{Y|X}|_{S_i}$ with trivial character.

The marking by one-vertex types forces every family parametrized by $\scrM$ to
factor through $D_i$, by \cite[Def.\,3.4(1)]{ACGS2}. The support argument in
\S\ref{SS: cutting at the last vertex} places each geometric fiber in $Y_i$.
Thus every punctured map $f$ factors over the open neighborhood
$U=\PP_\Xi\setminus V(\sigma-1)$ of the zero section $Y\subset\PP_\Xi$ and gives
a map $g=p\circ f$ to $Y$. The map $f$ is given over $g$ by a section of
$g^*N_{Y|X}$, the pullback along $f$ of the tautological section of
$p^*N_{Y|X}$. By \eqref{Eqn: flap normal line}, this line bundle has nonpositive
degree on every irreducible component of each geometric fiber. Its total degree
is $-3d_0$ on $\scrM_0$ and $\scrM_0^+$, and $-3d_0-k$ on $\scrM_k$. A section
vanishes on the components of negative degree, and then on all components by
connectedness. Cohomology and base change therefore give
\[
  \pi_*g^*N_{Y|X}=0,\qquad \mathbb V=R^1\pi_*g^*N_{Y|X}
    \text{ locally free}.
\]
The section defining $f$ vanishes. Hence every family factors through $Y\cap
D_i=Y_i$, also over nonreduced bases.

The logarithmic structure on $U$ is divisorial, with logarithmic boundary pulled
back from $Y$ under $p$. Thus projection to $Y$ preserves the map to $\cX$.
Applying \cite[\S4.1]{ACGS2} to the lifting problem with target $Y$ over the
same base $\frakM$ gives a perfect obstruction theory
\[
  \phi_Y\colon\EE_Y=(R\pi_*g^*\Theta_Y)^\vee
    \longrightarrow L_{\scrM/\frakM}.
\]
The tangent splitting \eqref{Eqn: zero section tangent splitting} gives
\begin{equation}
\label{Eqn: split normal obstruction theory}
  \EE_X\simeq\EE_Y\oplus\mathbb V^\vee[1],
  \qquad \phi_X=(\phi_Y,0).
\end{equation}
The normal summand maps to zero because the universal map factors through the
zero section, as follows from the cotangent and trace construction in
\cite[(4.3)--(4.4)]{ACGS2}. We thus have compatible obstruction theories
$\Phi_X,\Phi_Y$ for $\scrM\xrightarrow{\mathrm{id}}\scrM\to\frakM$, with
obstruction complex $(\mathbb V^\vee[1]\to0)$ on the identity. The virtual
pullback along the identity with this obstruction theory is multiplication by
$c_{\mathrm{top}}^T(\mathbb V)$. Manolache's functoriality theorem
\cite[Thm.\,4.8]{Manolache}, applied to the finite-dimensional approximations
defining equivariant Chow groups, therefore gives the following relation between
equivariant virtual fundamental classes:
\begin{equation}
\label{Eqn: global normal excess}
  [\scrM]_T^\virt
    =c_{\mathrm{top}}^T(\mathbb V)\cap[\scrM]_{T,\phi_Y}^\virt.
\end{equation}
Equivariant Chow localization commutes with Chern-class operators. By equivariant 
Chow localization \cite[Thm.\,5.3.5]{Kresch} and the projection
formula, it suffices for vanishing of a fixed contribution to show that
$c_{\mathrm{top}}^T(\mathbb V)$ restricts to zero on that fixed component.

\begin{lemma}
\label{Lem: negative section vanishing}
After multiplication by $\sE$, the localization contribution of any fixed
component parametrizing maps with an irreducible component of the source mapped
into $S_i$ vanishes. This holds on $\scrM_0$, on $\scrM_k$ over
$\varepsilon^{-1}(\breve S_i)$, and on $\scrM_0^+$ over
$(\varepsilon^+)^{-1}(\breve S_i)$.
\end{lemma}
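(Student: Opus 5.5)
Since $[\scrM]_T^\virt=c_{\mathrm{top}}^T(\mathbb V)\cap[\scrM]^\virt_{T,\phi_Y}$ by \eqref{Eqn: global normal excess}, and equivariant localization commutes with Chern class operators, it suffices to show that on any fixed component $\mathcal F$ of the kind described, the product $\sE\cdot c_{\mathrm{top}}^T(\mathbb V)|_{\mathcal F}$ vanishes. The plan is to produce either a trivial $T$-character summand in $\mathbb V|_{\mathcal F}$, or a zero factor in $\sE$.

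Let $C_1\subset C$ be a component mapped into $S_i$, and take the maximal connected subcurve $C'\supseteq C_1$ mapped into $S_i$. By \eqref{Eqn: flap normal line}, $N_{Y|X}|_{S_i}$ is trivial with trivial character, and $\deg N_{Y|X}|_{S_i}=-(S_i+3F_q)\cdot S_i=3-3=0$; indeed $S_i^2=-3$ on $\FF_3$, and the blowups miss $S_i$. Hence $g^*N_{Y|X}|_{C'}\simeq\O_{C'}$ equivariantly, with trivial weight.

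Consider the normalization sequence splitting $C$ at the nodes $C'\cap\overline{C\setminus C'}$:
\[
0\to H^0(C,L)\to H^0(C',L)\oplus H^0(C'',L)\to \bigoplus_{\text{nodes}}L_x\to H^1(C,L)\to H^1(C',L)\oplus H^1(C'',L)\to 0,
\]
with $L=g^*N_{Y|X}$. Since $H^0(C,L)=0$ and $H^0(C',L)=\CC$ has weight zero, we argue as follows. If $C''=\emptyset$, meaning the whole curve lies in $S_i$, then we need a marking $E_j$ on $C'$ (because $d_0>0$ forces a component leaving $S_i$ unless $A$ is supported on $S_i$, and the classes on $Y_i$ with positive $d$ are never multiples of $S_i$ alone in the contributing types). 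Alternatively, the weight-zero constant section must map injectively into $H^1$.

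More robustly, count weights. The weight-zero part of $H^1(C,L)$ has dimension at least
\[
h^0(C',L)-\#\{\text{weight-zero node fibers}\}+\dots
\]
so a trivial summand of $\mathbb V$ appears unless the constant section on $C'$ extends through a node. Extension is possible only when the adjacent component $C''_x$ carries a nonzero weight-zero section of $L$. But $L$ has nonpositive degree on each component, and on non-contracted fiber covers its degree is $-1\cdot$(covering degree)$<0$, so no such section exists. Nodes to contracted components require iterating the argument. Hence there will be a weight-zero subspace in $H^1$, giving $c_{\mathrm{top}}^T(\mathbb V)|_{\mathcal F}=0$.

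The exception to check is when $C'$ carries a leg $E_j$, or the outgoing leg, whose evaluation lies in $S_i$. There $\sE$ already vanishes, since $[z_j]_T$ restricts to zero at $F_j\cap S_i$. For the outgoing leg of $\scrM_k$ (respectively $p$ on $\scrM_0^+$), the restriction to $\varepsilon^{-1}(\breve S_i)$ is used: evaluation at $q\in\breve S_i$ together with the $T$-fixed condition pins things down. The same counting argument applies, because the marked points do not alter $L$.

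The main obstacle is the careful weight bookkeeping in the normalization sequence, including contracted components and nonreduced or stacky fixed loci. I would try to handle it by working on the universal curve over $\mathcal F$ and showing that $\pi_*(L|_{\mathcal C'})\simeq\O$ injects into $R^1\pi_*L$ as a trivial-weight line subbundle. That yields the vanishing of the top Chern class directly.
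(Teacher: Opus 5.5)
Your setup agrees with the paper. You reduce via \eqref{Eqn: global normal excess} to showing $c_{\mathrm{top}}^T(\mathbb V)|_{\scrF}=0$, take a maximal connected subcurve $C'$ mapped into $S_i$, discard the case of a marking labeled by $E_j$ on $C'$ because $\sE$ vanishes there, and apply the normalization sequence with $L=g^*N_{Y|X}$ trivial of trivial character on $C'$. (The case $C''=\emptyset$ is excluded simply because $H^0(C,L)=0$, not by the presence of a marking.)

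However, the weight count that is supposed to produce the trivial summand is wrong. Since $H^0(C,L)=H^0(C'',L)=0$, the sequence reads $0\to H^0(C',L)\xrightarrow{\Delta}\bigoplus_x L_x\to H^1(C,L)\to H^1(C'',L)\to0$, with the sum over the $m$ nodes joining $C'$ to $C''$. The trivial-character piece this produces in $\mathbb V$ is the cokernel of the diagonal $\Delta$, of rank $m-1$. Your bound has its two terms in the wrong order. Your final suggestion, that $\pi_*(L|_{\mathcal C'})\simeq\O$ injects into $R^1\pi_*L$, also has the mechanism backwards: the constant section maps to the node fibers, and only the cokernel reaches $H^1$. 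Non-extension of the constant section is automatic and proves nothing. For $m=1$ the map $\Delta\colon\CC\to L_x$ is an isomorphism, and no trivial-character summand arises.

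The missing idea is therefore a proof that $m\geq2$, which is the real content of the lemma.
\begin{itemize}
\item If $f|_{C'}$ is constant, $m\geq2$ follows from stability, since $C'$ then carries at most the distinguished marking.
\item If $f_*[C']=r[S_i]$ with $r>0$, the degree--contact relation gives, for each $j\neq i$, $r=D_j\cdot f_*[C']=\sum_E\langle\mathbf u(E),\overline D_j\rangle$ over the attaching edges. So $C'$ must be attached over each of the two distinct points $S_i\cap D_j$, $j\neq i$.
\end{itemize}
This is exactly where the restriction over $\breve S_i$ enters. It ensures that the distinguished marking, if it lies on $C'$, is off $D_j$ and contributes nothing to these sums. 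Your remark that evaluation at $q$ ``pins things down'' does not capture this.

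You also leave the family version open. The paper handles it as follows. At each attaching node the tangent character is trivial on the $C'$ side and nonzero on the adjacent fiber cover. Hence the smoothing parameter has nonzero character and vanishes on all of $\scrF$, even over a nonreduced base. After a finite \'etale base change this permits partial normalization and gives $0\to\O_{\scrF}\xrightarrow{\Delta}\O_{\scrF}^{\oplus m}\to\mathbb V|_{\scrF}$. The cokernel of $\Delta$ is then a trivial subbundle of rank $m-1>0$ with trivial character. By maximality of $C'$, every adjacent component is a nonconstant fiber cover, so no iteration over contracted components is needed.
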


\begin{proof}
Let $\scrF\subset\scrM^T$ be a fixed component, restricted to the stated
open set when there is a distinguished marking. For a punctured map $f\colon
C^\circ\to X$ over a geometric point of $\scrF$, let $C_0$ be a maximal
connected subcurve mapped into $S_i$, and let $m$ be the number of nodes joining
$C_0$ to the other components of $C$. We may assume that $C_0$ contains none of
the markings labeled by $E_j$, since otherwise $\sE$ restricts to zero. If
$f|_{C_0}$ is constant, stability gives $m\ge2$, and $m\ge3$ unless $C_0$
contains the distinguished marking. Otherwise write $f_*[C_0]=r[S_i]$, $r>0$.
The generalized balancing condition \cite[Prop.\,2.29 and Cor.\,2.30]{ACGS2} shows
\begin{equation}
\label{Eqn: endpoint contacts on negative section}
  r=D_j\cdot f_*[C_0]
    =\sum_{E\text{ attaching }C_0}
       \langle\mathbf u(E),\overline D_j\rangle,
  \qquad j\ne i.
\end{equation}
Here $\overline D_j$ is the section of the ghost sheaf of $X$ associated with
$D_j$, and $\mathbf u(E)$ is oriented away from $C_0$. The distinguished
marking, if present, contributes zero. A nonzero summand requires an attachment
over $S_i\cap D_j$. These two points are distinct, proving $m\ge2$ also in this
case.

We next construct the corresponding subcurves in the universal family over
$\scrF$. The restriction $f|_{C_0}$ is stable when the attaching points are
retained as markings, and its image is fixed pointwise. Hence $T$ acts trivially
on $C_0$. Each adjacent component maps equivariantly with positive degree onto a
fiber component meeting $S_i$, so its tangent character at the attaching point
is nonzero. At an attaching node $E$, the logarithmic curve has, locally, a
relation
\[
  s_0+s_1=\pi^\flat\rho_E
\]
in its log structure, where $s_0,s_1$ give the two branch parameters and
$\rho_E$ is a lift of the smoothing element on the base. The characters of the
branch parameters are zero and nonzero, respectively. Thus the image
$\alpha(\rho_E)$ under the structure homomorphism has nonzero character and must
vanish on $\scrF$, including over a nonreduced base. This holds at every
geometric point, so the attaching nodes form a finite \'etale family. After a
finite \'etale surjective base change choosing $C_0$ and ordering its attaching
nodes, partial normalization gives families $\cC_0$ and $\cC'$ over $\mathscr
F$. The action on $\cC_0$ is trivial, so equivariance gives a scheme-theoretic
factorization of $f|_{\cC_0}$ through $S_i$. Write $\pi_0$, $\pi'$ for the
projections to $\scrF$ and $g_0$, $g'$ for the induced maps to $Y$.

Each connected component of a geometric fiber of $\pi'$ contains an irreducible
component adjacent to $C_0$ in the original curve and hence mapping with
positive degree onto a fiber component of $a_i$. The line bundle $g'^*N_{Y|X}$
has negative degree on this irreducible component. Nonpositivity on its other
components gives $R^0\pi'_*g'^*N_{Y|X}=0$, so $R^1\pi'_*g'^*N_{Y|X}$ is locally
free. Equivariant triviality of $N_{Y|X}|_{S_i}$ and genus zero give
$\pi_{0*}g_0^*N_{Y|X}\simeq\O_{\scrF}$ and $R^1\pi_{0*}g_0^*N_{Y|X}=0$. The
normalization sequence therefore gives an exact sequence of equivariant vector
bundles
\begin{equation}
\label{Eqn: positive degree normal normalization}
  0\longrightarrow\O_{\scrF}
   \xrightarrow{\Delta}\O_{\scrF}^{\oplus m}
   \longrightarrow\mathbb V|_{\scrF}
   \longrightarrow R^1\pi'_*g'^*N_{Y|X}\longrightarrow0.
\end{equation}
Here $\Delta$ is diagonal with respect to the same trivialization at each node.
Thus $\O_{\scrF}^{\oplus m}/\Delta\O_{\scrF}$ is a trivial subbundle
of rank $m-1>0$ with trivial character and locally free quotient. It follows
that $c_{\mathrm{top}}^T(\mathbb V|_{\scrF})=0$. Pushforward along the
finite \'etale cover gives the same vanishing before base change, with rational
coefficients. Equation~\eqref{Eqn: global normal excess} proves the assertion.
\end{proof}


\subsection{The outgoing component and its contribution}
\label{SS: fixed base comparison}

We now compare the fixed contributions on $\scrM_k$ and $\scrM_0^+$. By
Lemma~\ref{Lem: negative section vanishing} and the choice of $\sE$, we may
restrict to fixed maps for which neither an irreducible component nor a marking
labeled by $E_j$ maps into $S_i$. Write $\scrF_k\subset\scrM_k^T$ for this locus
over $\breve S_i$, and $\scrF_0\subset(\scrM_0^+)^T$ for the analogous locus
with the distinguished marking $p$ mapped into $H_i$ over $\breve S_i$. Denote
the fibers under $\varepsilon$ and $\varepsilon^+$ over $q$ by $\scrF_k(q)$ and
$\scrF_0(q)$, respectively. On $\scrF_0(q)$, the marking $p$ maps to $y_q=
F_q\cap H_i$. Since $P\cdot A=0$, every map parametrized by $\scrF_0(q)$ avoids
$P$.

Let $\phi_0\colon\EE_0\to L_{\scrM_0^+/\frakM_0^+}$ be the relative perfect
obstruction theory from \S\ref{SS: relative localization}. We work one fixed
component at a time. Choose an open and closed substack
$\foF_0\subset(\frakM_0^+)^T$ containing its image, and write
$\zeta_0\colon\scrF_0\to \foF_0$ for the induced morphism. The fixed part
$\phi_0^{\fix}$ defines a virtual pullback $\zeta_0^!$. The relative virtual
localization theorem of Graber and Martin \cite[Prop.\,4.5 and
Thm.\,4.8]{GraberMartin} expresses the contribution as
\begin{equation}
\label{Eqn: closed relative localization class}
  \gamma_0
    =\frac{\zeta_0^!(\alpha_0)}
           {e_T((\EE_0^\vee)^{\mathrm{mov}})},
  \qquad
  \alpha_0=\operatorname{Res}_{\foF_0/\frakM_0^+}[\frakM_0^+]_T.
\end{equation}
Here $\operatorname{Res}$ is their residue homomorphism, and the superscripts
$\fix$ and $\mathrm{mov}$ denote the summands with trivial and nontrivial torus
characters. The denominator is $e_T(N^\virt)$ in their notation. We use rational
equivariant Chow groups, localized as in \cite[\S3.1]{GraberMartin}. Define
$\zeta_k: \scrF_k\to \foF_k$ and $\gamma_k$ analogously for
$\scrM_k\to\frakM_k$. Let $\gamma_0(q)$ and $\gamma_k(q)$ be obtained by capping
with $\sE$ and taking refined pullback along $\{q\}\hookrightarrow\breve S_i$,
using $\varepsilon^+$ and $\varepsilon$, respectively.

\begin{remark}
\label{Rem: singular localization base} The Artin stack bases can remain singular,
hence require \cite{GraberMartin}, at the images of components of the fixed
locus not discarded by Lemma~\ref{Lem: negative section vanishing}. For example,
attach four degree-one covers of $H_2$ in pairs to two contracted components,
which meet a third, all over $H_2\cap D_1$. Put a marking of contact order
$4$ on the third component and one of contact order $1$ over $H_2\cap D_3$ on
each cover. This punctured map is fixed by $T$ up to isomorphism and avoids
$S_2$. The basic-monoid calculation gives, after imposing the marking ideal, a
toric factor with monoid
\[
  \bigl\langle\ell_1,\ell_2,\lambda_1,\lambda_2
    \mid\ell_1+2\lambda_1=\ell_2+2\lambda_2\bigr\rangle^{\mathrm{sat}}.
\]
Its dual cone has dimension three and four extremal rays, so the Artin stack base
is singular at the image of this map.
\end{remark}

Let $\btau_k$ be the two-vertex refinement of $\bdelta_0(k)$ with both vertices
in the ray of $D_i$, of classes $A,kf_i$, joined by an edge $E_p$ of contact
order zero. The outgoing leg lies on the second vertex and all other legs on the
first.

\begin{lemma}
\label{Lem: outgoing fixed refinement}
The universal map over $\scrF_k$ has a canonical $\btau_k$-marking.
\end{lemma}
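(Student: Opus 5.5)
The plan is to identify, at every geometric point of $\scrF_k$, a unique irreducible component $C_{\mathrm{out}}$ of the source that carries the outgoing puncture and is a degree-$k$ cover of $F_q$ totally ramified over $q$ and $y_q$. I would then show that the node at which $C_{\mathrm{out}}$ meets the rest of the curve extends to a family over $\scrF_k$. Splitting at that node gives the two vertices of $\btau_k$.

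\emph{Pointwise structure.} Let $f\colon C^\circ\to Y_i$ be a $T$-fixed map in $\scrF_k(q')$ with $q'\in\breve S_i$. By the defining property of $\scrF_k$ and Lemma~\ref{Lem: negative section vanishing}, no irreducible component maps into $S_i$. The outgoing puncture $x_{\mathrm{out}}$ has contact order $-k\be_i$ in the cone spanned by $\check P,\check D_i$, so it maps to $S_i$, in fact to $q'$. Hence the component $C_{\mathrm{out}}$ containing it is not contracted, since otherwise it would lie in $S_i$.

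Every non-contracted $T$-fixed component maps either into the fixed curves $S_i$ or $H_i$, or onto a $T$-invariant fiber component of $a_i$ as a cover totally ramified over the two fixed points. Since $q'\in\breve S_i$ is general, $F_{q'}$ is irreducible, so $C_{\mathrm{out}}\to F_{q'}$ is such a cover. Its degree is $k$: at $x_{\mathrm{out}}$ the pairing of the contact order with $\overline P$ equals the ramification order, and this equals $k$ because $P\cdot\bA(v_0)=0$ and the legs $E_j$ have zero $P$-component.

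The second ramification point $y\in C_{\mathrm{out}}$ maps to $y_{q'}\in H_i$. It cannot be a marking: markings labeled by $E_j$ lie over $\bigcup_{j\neq i} D_j$, and there is no other marking. If $y$ were a smooth non-marked point, the total class would be $kf_i$ alone, which contradicts $d_0>0$. Therefore $y$ is a node, and by irreducibility of $F_{q'}$ and the totally ramified cover it is the only special point of $C_{\mathrm{out}}$ besides $x_{\mathrm{out}}$. The complementary subcurve $C'$ is then connected of class $A$.

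\emph{Contact order and tropical type at the node.} The point $y_{q'}$ lies in $D_i$ but in no other component of $X_0$, so the node has contact order zero and both adjacent vertices map to the ray of $D_i$. Generalized balancing, as in Lemma~\ref{Lem: boundary tree contact}, then shows that the tropical type of $f$ contracts onto $\btau_k$, with $kf_i$ on the vertex of $C_{\mathrm{out}}$ and $A$ on the vertex of $C'$.

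\emph{Families and canonicity.} The node $y$ is canonical because $C_{\mathrm{out}}$ is unique. The main obstacle is to show that the node persists scheme-theoretically over a possibly nonreduced base, and that the resulting $\btau_k$-marking satisfies the ideal condition of \cite[Def.\,3.20]{ACGS2}. For this I would reuse the character argument from the proof of Lemma~\ref{Lem: negative section vanishing}.

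On the $C_{\mathrm{out}}$ branch the node parameter has character $-\hbar/k\neq0$. The branch parameter on the $C'$ side has character $0$ or a nonzero multiple of $\hbar$ different from $-\hbar/k$, according as that component is contracted, maps into $H_i$, or covers another fiber. In every case the smoothing parameter $\alpha(\rho_E)$ has nonzero character and therefore vanishes on $\scrF_k$. Consequently this node forms a finite \'etale family over $\scrF_k$, and by uniqueness it is in fact a section. Partial normalization along it splits the universal curve into $\cC_{\mathrm{out}}\sqcup\cC'$.

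The log structure at the node then supplies the edge $E_p$ with its length element, and the markings already present for $\bdelta_0(k)$ descend to the two pieces. Together these yield the $\btau_k$-marking, which is canonical because no choices were made at any step.
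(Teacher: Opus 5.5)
Your proposal follows essentially the paper's proof: you identify $C_{\mathrm{out}}$ pointwise as a degree-$k$ cover of $F_{q'}$, totally ramified over $q'$ and $y_{q'}$, with a unique attaching node over $y_{q'}$ (using $d_0>0$), and then use that the node's smoothing parameter has nonzero $T$-weight while $T$ acts trivially on the underlying base, so it vanishes and the node can be marked by $E_p$ with contact order zero. One slip in your character step: what is needed is that the two branch characters do not cancel (the $C'$-side tangent character must not be $+\hbar/k$), not that it differs from $-\hbar/k$, since a second degree-$k$ cover of $F_{q'}$ would have exactly $-\hbar/k$; moreover that case cannot occur, because $P\cdot A=0$ and no component lies in $P$ force $C'$ to avoid $P$, so the component through the node maps into $H_i$ and has character $0$, which is how the paper argues.
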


\begin{proof}
Let $f$ be a stable map over a geometric point of $\scrF_k(q)$, with outgoing
puncture $x$. The component $C_{\mathrm{out}}$ carrying $x$ is nonconstant,
since no component maps into $S_i$. Equivariance and the outgoing contact give a
degree-$k$ cover of $F_q$, totally ramified over $q,y_q$. Its special points are
fixed, and the other markings lie on different fibers. Hence $C_{\mathrm{out}}$
has a unique attaching node over $y_q$, since $d_0>0$. Write $C_0$ for the
remaining curve and $p,p_\infty$ for the preimages of the node on
$C_0,C_{\mathrm{out}}$.

The edge $E_p$ is contracted, so $\mathbf u(E_p)=0$. The remaining map has class
$A$ and avoids $P$, since $P\cdot A=0$ and no component maps into $P$. Thus the
component through $p$ maps into $H_i$, and stability makes the action on this
component trivial. The $T$-action on $T_pC_0$ and $T_{p_\infty}C_{\mathrm{out}}$
has characters $0$ and $-\hbar/k$, respectively.

Let $W\to\scrF_k(q)$ be a morphism. \'Etale locally on $W$, choose a section
$\tau_p$ of $\M_W$ lifting the section of $\overline{\M}_W$ associated with the
attaching node. Since $T$ acts on $\tau_p$ with nonzero weight and acts
trivially on $\underline W$, equivariance gives $\alpha_W(\tau_p)=0$. We can
therefore mark the node by $E_p$, with $\mathbf u(E_p)=0$, giving the canonical
$\btau_k$-marking.
\end{proof}

\begin{lemma}
\label{Lem: outgoing fixed splitting}
Splitting along $E_p$ and projecting to the remaining map defines a morphism
$r_k\colon\scrF_k(q)\to\scrF_0(q)$ preserving the evaluations at the markings
labeled by $E_j$. There is an isomorphism of underlying stacks
\[
  \scrF_k(q)\simeq\scrF_0(q)\times B\mu_k,
\]
under which $r_k$ is the first projection.
\end{lemma}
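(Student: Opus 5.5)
The plan is to construct $r_k$ from the canonical $\btau_k$-marking of Lemma~\ref{Lem: outgoing fixed refinement}, and then to build an inverse up to the residual $\mu_k$-gerbe by regluing a fixed outgoing cover.

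\emph{Step 1: the morphism $r_k$.} Over $\scrF_k(q)$ the universal map is $\btau_k$-marked. Splitting along $E_p$ in the sense of \cite[Prop.\,5.17]{ACGS2} gives a map over the same base. Its first piece has type $\bdelta_0$ together with a new leg $L_p$ of contact order zero; its second piece is the outgoing cover. Since $\mathbf u(E_p)=0$, the leg $L_p$ is an ordinary marked point, and the first piece is a point of $\scrM_0^+$. The following properties of this point are inherited pointwise from the proof of Lemma~\ref{Lem: outgoing fixed refinement}:
\begin{itemize}
\item it is $T$-fixed;
\item $p$ maps to $y_q$;
\item no component and no $E_j$-marking maps into $S_i$.
\end{itemize}
Hence the point lies in $\scrF_0(q)$. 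The $E_j$-markings all lie on the first piece, so their evaluations are untouched.

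\emph{Step 2: the outgoing cover is rigid.} The second piece has one puncture of contact order $-k\be_i$ over $q$ and one contact-order-zero marking $p_\infty$ over $y_q$. As a $T$-fixed map with image $F_q$, it is the unique degree-$k$ cover $z\mapsto z^k$ totally ramified over $q$ and $y_q$. Its log enhancement is unique: the puncture over $q\in\breve S_i$ meets only $P$ and $D_i$, and the contact order determines it. Its automorphism group is $\mu_k$, the deck transformations fixing both ramification points. Write $\scrM_{\mathrm{out}}(q)\simeq B\mu_k$ for this point.

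\emph{Step 3: the inverse morphism $\scrF_0(q)\times B\mu_k\to\scrF_k(q)$.} Over a base $W$, glue the family on $\scrF_0(q)$ at $p$ to the constant family of covers at $p_\infty$. The evaluations agree at $y_q$, so the underlying curves glue. The log structure needs more care. Because $\mathbf u(E_p)=0$, the gluing at $E_p$ is tropically transverse with $\coker\Phi=0$: the edge-length parameter is free and both vertices lie on the ray of $D_i$. By \cite[Thm.\,5.1, 5.5]{Gross}, the enhancement morphism is therefore an isomorphism onto the fiber product. The fiber product is $\scrF_0(q)\times B\mu_k$, because the second factor is a single point evaluating to $y_q$.

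This gives a morphism of $T$-fixed stacks. It is inverse to $(r_k,\text{outgoing piece})$, since splitting and gluing are mutually inverse on $\btau_k$-marked families. What remains is to check that on $\scrF_k(q)$ the smoothing parameter of $E_p$ vanishes, so that we stay in the $\btau_k$-marked locus. This is exactly the weight argument $\alpha_W(\tau_p)=0$ from Lemma~\ref{Lem: outgoing fixed refinement}.

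\emph{Main obstacle.} The hardest point is to make Step 3 hold scheme-theoretically over nonreduced $T$-fixed bases. The danger is extra infinitesimal deformations of the cover, or of the node log structure, that are invisible at geometric points. My first approach would be to show that the fixed part of the deformations of the outgoing cover relative to $\frakM$ is zero. The cover's moving parts have nonzero weight, multiples of $\hbar/k$, and the node-smoothing deformation has weight $-\hbar/k$, so both are excluded from the fixed locus. As a result, $\scrM_{\mathrm{out}}(q)^T$ is the reduced gerbe $B\mu_k$, and the fiber product description holds functorially. The statement is about underlying stacks only, so I would not need to match the base log structures.
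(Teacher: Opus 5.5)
Your overall decomposition is the paper's: use the canonical $\btau_k$-marking to split along $E_p$ and define $r_k$, identify the $T$-fixed outgoing factor with $B\mu_k$, and show that regluing along the contact-order-zero edge is cartesian on underlying stacks. Step~1 is fine.

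In Step~2, your one-line claim that the log enhancement of the cover is unique needs support. The paper obtains it from \cite[Thm.\,6.2]{Gross}. That result replaces $X$ by $D_i$, with the divisorial log structure of the other components, without changing the underlying stack. The outgoing factor then becomes the stack of degree-$k$ covers of $(F_q,q)$ with an ordinary marking over $y_q$.

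The genuine gap is in Step~3 and in the paragraph meant to rescue it.
\begin{itemize}
\item Theorems~5.1 and~5.5 of \cite{Gross} give a finite representable enhancement morphism and compute its degree $|\coker\Phi|$. They do so only under flatness hypotheses, which the paper checks for the $E_j$-gluings in Lemma~\ref{Lem: single leg flatness at root}; you have not checked them for $E_p$. More importantly, a finite representable morphism of degree one need not be an isomorphism onto a possibly nonreduced target. So $\coker\Phi=0$ does not give the isomorphism you claim.
\item Your infinitesimal patch is wrong as stated. Relative to $\frakM$, the outgoing cover does have a $T$-fixed deformation, namely the pull-back of the torus vector field on $F_q$. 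The paper later finds $\GG_k^{\fix}\simeq\O[0]$, which is exactly why $\widetilde r_k$ is smooth of relative dimension one. This deformation is cancelled only by the $\GG_m$ of automorphisms of the two-pointed component, and that group lives in $\frakM$.
\end{itemize}

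The missing idea is one you half state. Since $\mathbf u(E_p)=0$ and the outgoing tropical map is determined by its attaching vertex, the basic monoid of each glued type is $Q_{\eta_0}\oplus\NN$, where the new factor records the length of $E_p$. Hence no saturation occurs, and the fs-cartesian gluing square of \cite[Cor.\,5.15]{ACGS2} is already cartesian on underlying stacks, over arbitrary, including nonreduced, bases. Combined with both attaching evaluations being $y_q$, this gives $\scrF_k(q)\simeq\scrF_0(q)\times B\mu_k$ with $r_k$ the first projection. It also makes your ``main obstacle'' disappear.
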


\begin{proof}
By Lemma~\ref{Lem: outgoing fixed refinement}, splitting the universal map over
$\scrF_k(q)$ along $E_p$ gives a remaining map marked by $\bdelta_0^+$ and a
smooth two-pointed outgoing map of class $kf_i$. The remaining map lies in
$\scrF_0(q)$, defining $r_k$, and both attaching evaluations equal $y_q$.
Splitting preserves the evaluations labeled by $E_j$.

Applying \cite[Thm.\,6.2]{Gross} to the outgoing type replaces the target $X$ by
$D_i$ with the divisorial log structure defined by the other components of
$X_0$, without changing the underlying moduli stack or obstruction theory. The
remaining boundary on $F_q$ is $\{q\}$, so the outgoing factor parametrizes
degree-$k$ covers of $(F_q,q)$ with an ordinary marking over $y_q$. After the
chosen isogeny of $T$, the outgoing fixed factor is $B\mu_k$, parametrizing
degree-$k$ covers totally ramified over $q,y_q$.

The outgoing tropical map is determined by the image of its attaching vertex.
Since $E_p$ has contact order zero, the tropical moduli cone of each glued type
is the product of that of the remaining type with the standard integral ray
recording the length of $E_p$. Thus the fs-cartesian gluing diagram of
\cite[Cor.\,5.15]{ACGS2} is cartesian on underlying stacks. Since both attaching
evaluations equal $y_q$, this identifies $\scrF_k(q)$ with
$\scrF_0(q)\times B\mu_k$, with $r_k$ the first projection.
\end{proof}

\begin{remark}
\label{Rem: exhaustion of punctured central factors}
The proofs of Lemmas~\ref{Lem: negative section vanishing} and \ref{Lem:
outgoing fixed refinement} apply to any punctured central class $A_k$ of
positive ruling degree. Each surviving fixed map carries its outgoing contact on
a degree-$k$ cover of $F_q$. Removing this component leaves an effective class
$A=A_k-kf_i$. Forgetting the ordinary attaching marking and stabilizing gives
the corresponding closed central factor, with unchanged cut data.
\end{remark}

For each actual decorated type $\eta_0$ along the image of $\scrF_0(q)$ in
$\frakM_0^+$, let $\eta_k$ be obtained by attaching the two-pointed outgoing
component along $E_p$. Let $\foU_0\subset \frakM_0^+$, $\foU_k\subset \frakM_k$
be the unions of the open neighborhoods supplied by the canonical basic-monoid
charts for these types. Their strata have types obtained by contracting
$\eta_0,\eta_k$, respectively.

After restricting to $\foU_0$, we take $\foF_0\subset \foU_0^T$ to be the locus
with tangent character zero at $p$. Similarly, let $\foF_k\subset \foU_k^T$ be
the locus where $x$ lies on a two-pointed component of class $kf_i$, with
tangent characters $0,-\hbar/k$ at the preimages $p,p_\infty$ of its attaching
node $E_p$. On $\foF_k$, let $\cL_p$ and $\cL_{p_\infty}$ denote the relative
cotangent lines at the two sections of the partial normalization. The smoothing
line is
\[
  \SS=\cL_p^\vee\otimes\cL_{p_\infty}^\vee.
\]

\begin{lemma}
\label{Lem: outgoing normal structure}
Attachment induces a $T$-equivariant isomorphism
\begin{equation}
\label{Eqn: outgoing ambient product}
  \foU_0\times[\AA^1/\GG_m]\xrightarrow{\sim}\foU_k,
\end{equation}
with trivial $T$-action on the quotient factor, inducing an isomorphism
$\foF_0\times B\GG_m\simeq \foF_k$. For the projection $\rho_k\colon \foF_k\to 
\foF_0$,
\begin{align}
\label{Eqn: outgoing normal cone product}
  C_{\foF_k/\frakM_k}
    &\simeq\rho_k^*C_{\foF_0/\frakM_0^+}\oplus\SS,\\
\label{Eqn: outgoing normal structure residue}
  \alpha_k&=e_T(\SS)^{-1}\cap\rho_k^*\alpha_0.
\end{align}
\end{lemma}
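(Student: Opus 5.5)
The plan is to construct the attachment morphism \eqref{Eqn: outgoing ambient product} directly on basic-monoid charts, read off the fixed loci, and then obtain \eqref{Eqn: outgoing normal cone product} and \eqref{Eqn: outgoing normal structure residue} by functoriality of normal cones and of the Graber--Martin residue under smooth products.

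\emph{Step 1: the attachment morphism.} A point of $\foU_0$ is a $\bdelta_0^+$-marked map to $\cX$ with marking $p$. Attach a two-pointed rational component carrying the outgoing puncture $x$ at $p$ along a node $E_p$ of contact order zero. The map to $\cX$ on the new component is constant over the stratum of $\bsigma(v_0)$, up to the puncture datum of contact order $-k\be_i$. Such an attachment is determined by the smoothing parameter of the node.

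\emph{Step 2: identifying the extra factor.} As in the proof of Lemma~\ref{Lem: outgoing fixed splitting}, the tropical moduli cone of $\eta_k$ is the product of the cone of $\eta_0$ with the ray $\NN$ recording the length of $E_p$. The same holds for every contraction appearing in the charts. The basic monoid is therefore $Q_{\eta_0}\oplus\NN$, and the canonical charts \cite[Rem.\,3.27]{ACGS2} give the local product with $[\AA^1/\GG_m]$. The two-pointed component has no moduli, since its automorphisms are absorbed into the stack quotient. These local isomorphisms glue because they are canonical. Hence \eqref{Eqn: outgoing ambient product} is étale and bijective on points, so it is an isomorphism.

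\emph{Step 3: the torus action and the fixed loci.} Transport $T$ through \eqref{Eqn: outgoing ambient product}. The characters $0$ and $-\hbar/k$ at $p$ and $p_\infty$ make the smoothing coordinate of weight $\hbar/k$ up to sign. The coordinate on $\AA^1$ is the image $\alpha(\rho_{E_p})$, which is a section of $\SS$. After the finite cover of $T$ this character is absorbed into the stack structure, so the action on $[\AA^1/\GG_m]$ is trivial as a stack action, but its normal line at the origin is $\SS$ with nontrivial character. The locus $\foF_k$ is by definition the condition that the node is not smoothed, which is the origin $B\GG_m$ times $\foF_0$.

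\emph{Step 4: normal cones and residues.} The normal cone of a product of embeddings is the product of the normal cones. The normal cone of $B\GG_m\subset[\AA^1/\GG_m]$ is the line bundle $\SS$. This gives \eqref{Eqn: outgoing normal cone product}. The Graber--Martin residue is compatible with flat pullback and with products by smooth factors \cite[Prop.\,4.5]{GraberMartin}, and for a regular embedding it is $e_T(N)^{-1}$. Applied to the second factor, this yields \eqref{Eqn: outgoing normal structure residue}.

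\emph{Main obstacle.} The hardest step is Step 2, showing that the product decomposition holds on the whole open neighborhood $\foU_k$ and not merely tropically at $\eta_k$. Every type that contracts $\eta_k$ must split as a contraction of $\eta_0$ times a choice of contracting $E_p$ or not. I would check this by noting that the outgoing vertex carries only the puncture and $E_p$. Balancing at that vertex fixes its position relative to the root vertex. Consequently no other edges can merge with it except through $E_p$.
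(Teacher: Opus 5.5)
Your Steps~3 and~4 follow the paper's route. The fixed locus is the $s=0$ part $\foF_0\times B\GG_m$, and the normal cone splits off $\SS$. Your ``compatibility with smooth products'' amounts to the residue version \cite[Lem.\,3.11]{GraberMartin} of Fulton's formula for exact sequences of cones, applied to the split sequence with kernel $\SS$.

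The gap is in Steps~1--2, that is, in the isomorphism \eqref{Eqn: outgoing ambient product} itself.
\begin{itemize}
\item You never construct the morphism on families. Step~1 only describes the curve over $s=0$. Saying it is ``determined by the smoothing parameter'' does not produce a family over $\foU_0\times[\AA^1/\GG_m]$ that inserts the two-pointed component exactly over $V(s)$ and equals $C_0$ where $s$ is invertible. It also does not give the basic log structure or the punctured map to $\cX$ on that family.
\item The charts of \cite[Rem.\,3.27]{ACGS2} are only smooth-local models $\Spec(\CC[Q]/I)$ of the monoidal factor. They cannot show that a morphism you have not defined is \'etale.
\item ``\'Etale and bijective on points, hence an isomorphism'' is false for stacks: $B\mu_2\to\Spec\CC$ is \'etale and bijective on geometric points. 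You would also need isomorphisms on stabilizers. That is exactly where the $\GG_m$ of the bubble must be matched with the $\GG_m$ of $[\AA^1/\GG_m]$, which you assert but do not prove.
\end{itemize}

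The paper fills this gap with \cite[Prop.\,5.3]{Kausz}. Pointed curves with a contraction to $C_0$ inserting a two-pointed rational component at $p$ over $V(s)$ are classified by line bundles with section $(L,s)$, the new node having local equation $xy=s$, and $L|_{V(s)}\simeq\SS$. Consequently the basic log structure of $C/W$ is the amalgamated sum of that of $C_0/W$ and the log structure of $(L,s)$. The map to $\cX$ then extends by \cite[Prop.\,2.10]{ACGS1} and prestable puncturing \cite[Prop.\,2.5]{ACGS2}.

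Two further checks are needed. First, the marking and puncturing ideals must be extended from $Q_{\eta_0}$, so that the $\NN$-factor is unconstrained. This holds because the outgoing puncturing condition is implied by the marking in $D_i$, and $E_p$ is not marked. Second, you need an explicit inverse: set the outgoing contact order to zero, subtract $kf_i$, stabilize, and recover $(L,s)$ from Kausz.

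Your announced main obstacle is not the real one. Contractions of $\eta_k$ are automatically products, since its cone is $\eta_0\times\RR_{\geq0}$. The bubble vertex is pinned to the $p$-vertex by the zero contact order of $E_p$, not by balancing, which is not imposed on maps to $\cX$.

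In Step~3, the character of $\SS$ is not ``absorbed by the finite cover of $T$''. The action on $[\AA^1/\GG_m]$ is trivial because the construction is functorial and $T$-equivariant. The component $\foF_k$ is selected by the linearization of $L$ with character $-\hbar/k$, and that nonzero character is what forces $s=0$.
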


\begin{proof}
A morphism $W\to \foU_0\times[\AA^1/\GG_m]$ gives a punctured map
$(C_0/W,p,f_0)$ to $\cX$ and a line bundle with section $(L,s)$ over $W$. By
\cite[Prop.\,5.3]{Kausz}, the family of domains and $(L,s)$ classify pointed
curves $(C/W,x)$ with a contraction to $C_0$, inserting a two-pointed rational
component at $p$ over $V(s)$ and giving an isomorphism where $s$ is invertible.
The basic log structures on the stack of pointed prestable curves and its
universal curve are the divisorial ones given by the nodal boundary and its
inverse image together with the marked sections, respectively. Applied over this
stack, Kausz's construction preserves the old nodal divisors, while the local
equation $xy=s$ \cite[Lem.\,5.5]{Kausz} identifies the new one with the zero
divisor on $[\AA^1/\GG_m]$, with multiplicity one. Thus the basic base log
structure of $C/W$ is the amalgamated sum along units of that of $C_0/W$ and the
log structure associated to $(L,s)$. Pulling back the basic logarithmic curve to
the given base log structure with this additional factor gives the required
logarithmic structure on $C/W$.

Keep the tropical map unchanged away from the leg at $p$. Map $E_p$ constantly
to the image of the vertex carrying $p$. Decorate the new outgoing leg by the
cone spanned by $\check D_i$ and $\check P$, and give it contact order
$-k\be_i$. When $E_p$ contracts, use this leg at the merged vertex. The tropical
correspondence \cite[Prop.\,2.10]{ACGS1} and prestable puncturing
\cite[Prop.\,2.5]{ACGS2} extend $f_0$ to a punctured map $C/W\to \cX$. Add
$kf_i$ to the decoration of the component carrying $x$.

Contact order zero gives the canonical decomposition
\[
  Q_{\eta_k}=Q_{\eta_0}\oplus\NN.
\]
The union of the marking and puncturing ideals is extended from $Q_{\eta_0}$
\cite[Def.\,3.20 and Prop.\,3.24]{ACGS2}. Indeed, the outgoing puncturing
condition is already imposed by the marking in $D_i$, and the ambient type does
not mark $E_p$. Thus the construction respects the basic log structures and
defining ideals and gives a morphism to $\foU_k$.

Conversely, for a family in $\foU_k$, change the outgoing contact to zero and
subtract $kf_i$ from the decoration of the component carrying $x$. By the choice
of charts, the modified decorated curve is stable except possibly for a
two-pointed component of class zero carrying $x$. Stabilizing the modified
decorated curve gives a contraction $\kappa\colon C\to C_0$ over $W$, compatible
with base change. By \cite[Prop.\,5.3]{Kausz}, this contraction of pointed curves
determines a line bundle with section $(L,s)$ on $W$ and is recovered from these
data. The modified tropical map factors through the tropicalization of $C_0$, so
\cite[Prop.\,2.10]{ACGS1} gives the descended map to $\cX$. The monoid
decomposition identifies its basic monoid with the $Q_{\eta_0}$-factor, giving
the remaining family in $\foU_0$. These constructions are inverse and commute
with base change and the $T$-action.

Along $s=0$, Kausz's line bundle is the tensor product of the tangent lines at
the attaching node. For fixed objects with $L$ of character $-\hbar/k$, the
trivial action on the underlying base forces $s=0$. This proves the product
description of $\foF_k$, identifies $L|_{\foF_k}$ with $\SS$, and shows that
$\foF_0,\foF_k$ are open substacks of $(\frakM_0^+)^T, (\frakM_k)^T$,
respectively.

The product isomorphism gives \eqref{Eqn: outgoing normal cone product} and
identifies the specialized fundamental classes by flat pullback along
\[
  \rho_k^*C_{\foF_0/\frakM_0^+}\oplus\SS
    \longrightarrow C_{\foF_0/\frakM_0^+}.
\]
The formula for exact sequences of cones \cite[Expl.\,4.1.6]{Fulton} has an
equivariant residue version \cite[Lem.\,3.11]{GraberMartin}. The proof there
applies unchanged to a split sequence with a cone stack as middle term. For our
sequence of moving cone stacks with kernel $\SS$, this gives
\eqref{Eqn: outgoing normal structure residue}.
\end{proof}

\begin{proposition}
\label{Prop: outgoing contribution operator} Let
$r_k\colon\scrF_k(q)\to\scrF_0(q)$ be the morphism from Lemma~\ref{Lem:
outgoing fixed splitting} that removes the outgoing component
$C_{\mathrm{out}}$. Let $\cL_p$ be the cotangent line bundle at the marking $p$
on the universal curve over $\scrF_0(q)$, with its natural $T$-linearization,
and put $\psi_p=c_1^T(\cL_p)$. For $k>0$, the localization contributions
$\gamma_0(q),\gamma_k(q)$ introduced after \eqref{Eqn: closed relative
localization class} fulfill
\begin{equation}
\label{Eqn: outgoing contribution operator}
  r_{k*}\gamma_k(q)
    =(-1)^k\left(1+\frac{k\psi_p}{\hbar}\right)^{-1}\cap\gamma_0(q).
\end{equation}
\end{proposition}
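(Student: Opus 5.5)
The plan is to compute $\gamma_k(q)$ directly from the relative localization formula \eqref{Eqn: closed relative localization class}, using the product structures already established. Lemma~\ref{Lem: outgoing fixed splitting} gives $\scrF_k(q)\simeq\scrF_0(q)\times B\mu_k$. Lemma~\ref{Lem: outgoing normal structure} gives $\foF_k\simeq\foF_0\times B\GG_m$ together with the residue identity $\alpha_k=e_T(\SS)^{-1}\cap\rho_k^*\alpha_0$. It therefore suffices to compare three ingredients on $\scrF_k(q)$ with their counterparts on $\scrF_0(q)$: the virtual pullbacks $\zeta_k^!$ and $\zeta_0^!$, the moving Euler classes $e_T((\EE_k^\vee)^{\mathrm{mov}})$ and $e_T((\EE_0^\vee)^{\mathrm{mov}})$, and the residues. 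The insertions $\sE$ and $\varepsilon^*[q]$ depend only on markings of the remaining curve, so by construction of $r_k$ they pull back from $\scrF_0(q)$.

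The first step is to split the obstruction theory. Normalize the universal curve over $\scrF_k(q)$ at the node $E_p$. By \eqref{Eqn: split normal obstruction theory}, $\EE_X=\EE_Y\oplus\mathbb V^\vee[1]$, and the normalization sequence decomposes $R\pi_*f^*\Theta_X$ into three pieces. The first is the pullback of the corresponding complex on $\scrF_0(q)$, which contains the contributions at $p$. The second is $R\Gamma(C_{\mathrm{out}},f^*\Theta_X)$ on the degree-$k$ cover $z\mapsto z^k$ of $F_q$. The third is minus the node term $T_{y_q}X$. On $C_{\mathrm{out}}$, I would use $\Theta_Y|_{F_q}\simeq T_{F_q}(-\log q)\oplus\O_{F_q}$ and $N_{Y|X}|_{F_q}\simeq\O_{F_q}(-1)$, the latter by \eqref{Eqn: flap normal line}. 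The weight-zero horizontal summand and its node counterpart are both fixed, and they cancel against each other. The fixed part of the remaining terms also cancels, so $\zeta_k^!$ is the pullback of $\zeta_0^!$ under $r_k$.

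The second step collects the moving pieces. These are $H^0(\O(k))$ from the pulled back $T_{F_q}(-\log q)\simeq\O_{F_q}(1)$, the node tangent of weight $-\hbar$, and $H^1(\O(-k))$ from the normal line, of rank $k-1$. Their weights are the multiples $j\hbar/k$ in the appropriate ranges, and the resulting products of weights reduce, up to sign and factorials, to a monomial in $\hbar$. On the residue side, $e_T(\SS)^{-1}$ equals $(-\hbar/k-\psi_p)^{-1}$, since $c_1^T(\cL_p^\vee)=-\psi_p$ and $\cL_{p_\infty}^\vee$ has weight $-\hbar/k$. Restricted along $\foF_k$, this becomes $-\frac k\hbar\bigl(1+\frac{k\psi_p}\hbar\bigr)^{-1}$. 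Finally, $r_{k*}$ contributes $1/k$ from $B\mu_k$.

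The main obstacle I expect is the exact bookkeeping of constants, so that all powers of $\hbar$, the factorials from the weight products, the $1/k$ from $B\mu_k$ and the $k/\hbar$ from the smoothing term collapse exactly to $(-1)^k$. Two points need care here. One is fixing conventions: the weight $-\hbar/k$ at $p_\infty$, $\hbar$ at $q$, and whether log tangent directions at $q$ are already absorbed in $\frakM_k$ rather than in $\EE_k$. The other is confirming, via \cite[Thm.\,6.2]{Gross} as in Lemma~\ref{Lem: outgoing fixed splitting}, that the outgoing factor's deformations relative to $\frakM_k$ are governed by $\Theta_{D_i}$ with only the boundary point $q$. I would first run the computation for $k=1$, where $H^1(\O(-1))=0$ and the sign $-1$ must come from the node and smoothing terms alone. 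I would then induct on $k$ by tracking how each added pair of weights changes the constant.
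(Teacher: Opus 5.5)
Your framework is the paper's. You normalize at $E_p$; your sequence $R\Gamma(C_0)+R\Gamma(C_{\mathrm{out}})-T_{y_q}X$ agrees in K-theory with the paper's triangle for $\GG_k=R\pi_{\mathrm{out}*}(f_{\mathrm{out}}^*\Theta_X\otimes\O(-p_\infty))$. You then use $e_T(\zeta_k^*\SS)=-r_k^*\psi_p-\hbar/k$ in the residue identity and take $1/k$ from $B\mu_k$. However, the two bookkeeping claims you do make are wrong, and with them the constants cannot collapse to $(-1)^k$.

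\emph{First gap: the fixed part does not cancel.} $H^0(C_{\mathrm{out}},f^*T_{F_q}(-q))=H^0(\O(k))$ has weights $-j\hbar/k$ for $0\le j\le k$. The node cancels only $-\hbar$. The weight-zero section, the pulled-back torus vector field, survives, so $\GG_k^{\fix}\simeq\O[0]$. Dimensions force this: $\foF_k\simeq\foF_0\times B\GG_m$ has one dimension less than $\foF_0$, while $\scrF_k(q)\simeq\scrF_0(q)\times B\mu_k$ has the same dimension as $\scrF_0(q)$. Because the bases differ, your claim that ``$\zeta_k^!$ is the pullback of $\zeta_0^!$'' needs an argument, and it is missing. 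The paper supplies it as follows.
\begin{itemize}
\item Factor $\zeta_k$ through $\scrF_0(q)\times_{\foF_0}\foF_k\simeq\scrF_0(q)\times B\GG_m$.
\item The first map $\widetilde r_k$ is, after twisting by the weight-zero line $\cL_p$, the product with $B\mu_k\to B\GG_m$. It is therefore smooth of relative dimension one.
\item $(\GG_k^{\fix})^\vee\to\LL_{\widetilde r_k}$ is an isomorphism, so virtual pullback along $\widetilde r_k$ is smooth pullback.
\item Manolache's functoriality then gives $\zeta_k^!\rho_k^*\beta=r_k^*\zeta_0^!\beta$.
\end{itemize}

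\emph{Second gap: a moving node term is missing.} Besides $T_{y_q}F_q$ of weight $-\hbar$ and the weight-zero directions, $T_{y_q}X$ contains the normal line $N_{Y|X}|_{y_q}$. It has weight $+\hbar$, because $N_{Y|X}|_{F_q}\simeq\O(-1)$ has trivial weight at $q$. With your list, the inverse moving Euler class is $(-1)^{k+1}$, and you would end with $(-1)^k\hbar^{-1}(1+k\psi_p/\hbar)^{-1}\cap\gamma_0(q)$, which has the wrong equivariant degree. With the term included, the degree-one moving weights are $j\hbar/k$ for $1\le j\le k$ and the degree-zero ones are $-j\hbar/k$ for $1\le j\le k-1$. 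Hence $e_T(\GG_k^{\mathrm{mov}})^{-1}=(-1)^{k-1}\hbar$, and
\[
  (-1)^{k-1}\hbar\cdot\Bigl(-\frac k\hbar\Bigr)\Bigl(1+\frac{k\psi_p}{\hbar}\Bigr)^{-1}\cdot\frac1k
  =(-1)^k\Bigl(1+\frac{k\psi_p}{\hbar}\Bigr)^{-1}.
\]
For $k=1$ the sign does come from the smoothing term. The factor $\hbar$ cancelling its $1/\hbar$, however, comes from this normal direction, not from ``the node and smoothing terms'' as you listed them. No induction on $k$ is needed.

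Two minor points. $\Theta_Y|_{F_q}$ has rank three with quotient $\O^{\oplus2}$; this is harmless, since trivial summands cancel against their node counterparts. You should also note that the $\mu_k$-characters of the deck action contribute only torsion Chern classes, so the weight count gives the rational Euler classes.
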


\begin{proof}
We first compute the contribution of the degree-$k$ cover of $F_q$ to the
relative deformation complex. Let $\pi_{\mathrm{out}}\colon
\cC_{\mathrm{out}}\to\scrF_k(q)$ be the family of outgoing components,
$f_{\mathrm{out}}\colon\cC_{\mathrm{out}}^\circ\to X$ the universal outgoing
map, whose underlying map factors through $F_q$ as a degree-$k$ cover, and
$p_\infty$ its attaching section. Put
\[
  \GG_k=R\pi_{\mathrm{out}*}
    \bigl(f_{\mathrm{out}}^*\Theta_X\otimes\O(-p_\infty)\bigr).
\]
Write $\EE_0,\EE_k$ for the pull-back to $\scrF_0,\scrF_k$ of the complexes
defining the relative perfect obstruction theories of $\scrM_0^+\to\frakM_0^+$
and $\scrM_k\to\frakM_k$, respectively. The restriction map from $f^*\Theta_X$
to the remaining subcurve has kernel $f_{\mathrm{out}}^* \Theta_X
\otimes\O(-p_\infty)$, extended by zero from the outgoing component. Derived
pushforward gives the distinguished triangle
\begin{equation}
\label{Eqn: outgoing normalization triangle}
  \GG_k\longrightarrow\EE_k^\vee
    \longrightarrow r_k^*\EE_0^\vee
    \longrightarrow\GG_k[1].
\end{equation}
This triangle reduces the comparison of the relative deformation complexes to
the computation of $\GG_k$. We determine its fixed part and the inverse Euler
class of its moving part.

The projection $a_i\colon Y_i\to S_i$ and the logarithmic differential of the
degeneration parameter $\tau$ give an equivariant exact sequence for the
logarithmic tangent bundle of $Y$
\[
  0\longrightarrow T_{F_q}(-q)\longrightarrow\Theta_Y|_{F_q}
    \longrightarrow\O_{F_q}^{\oplus2}\longrightarrow0.
\]
The quotient has no cohomology after pullback to the cover and twisting by
$-p_\infty$. By \eqref{Eqn: zero section tangent splitting}, it therefore
suffices to consider $T_{F_q}(-q)\simeq\O_{F_q}(1)$ and the normal line bundle
$N_{Y|X}|_{F_q}\simeq\O_{F_q}(-1)$. The fiber $T_{F_q}(-q)|_q\simeq
T_qF_q\otimes T_q^*F_q$ has weight zero. Since $q\notin D_\infty$, the invariant
function $\sigma$ defines $Y$ near $q$ and trivializes its conormal line, so
$N_{Y|X}|_q$ also has weight zero. The tangent weight at $x$ is $\hbar/k$. Thus
$H^0 \big(f_{\mathrm{out}}^* T_{F_q}(-q)\otimes \O(-p_\infty)\big)$ has weights
$-j\hbar/k$, $0\le j<k$. Similarly, equivariant Serre duality shows that $H^1
\big(f_{\mathrm{out}}^* N_{Y|X}|_{F_q}\otimes \O(-p_\infty)\big)$ has weights
$j\hbar/k$, $1\le j\le k$. The outgoing family is pulled back from
$[\PP^1/\mu_k]\to B\mu_k$, with $\mu_k$ acting by deck transformations. Its
cohomology bundles come from $T\times\mu_k$-representations, whose
$\mu_k$-characters contribute only torsion Chern classes. The weight-zero
section is pulled back from the torus vector field on $F_q$, hence is
deck-invariant. We obtain
\begin{equation}
\label{Eqn: GG_k^mov}
  \GG_k^{\fix}\simeq\O[0],\qquad
  e_T(\GG_k^{\mathrm{mov}})^{-1}
    =\frac{\prod_{j=1}^k(j\hbar/k)}
           {\prod_{j=1}^{k-1}(-j\hbar/k)}
    =(-1)^{k-1}\hbar.
\end{equation}

We next use the fixed part of the normalization triangle to compare the virtual
pullbacks. Recall the morphisms $\zeta_0\colon\scrF_0(q)\to\foF_0$ and
$\zeta_k\colon\scrF_k(q)\to\foF_k$ induced by composition with $X\to\cX$.
Contraction of $C_{\mathrm{out}}$ commutes with this composition, so
$\rho_k\circ\zeta_k=\zeta_0\circ r_k$. Hence $\zeta_k$ factors as
\[
  \scrF_k(q)\xrightarrow{\widetilde r_k}
    \scrF_0(q)\mathbin{\times}_{\foF_0}\foF_k
    \longrightarrow \foF_k.
\]
By \eqref{Eqn: outgoing ambient product}, the intermediate stack is
$\scrF_0(q)\times B\GG_m$. After tensoring the line represented by the second
factor with $\operatorname{pr}_1^*\cL_p$, of $T$-weight zero by the definition
of $\foF_0$, the morphism $\widetilde r_k$ is the product with $B\mu_k\to
B\GG_m$ induced by the deck action on the attaching tangent line. Thus
$\widetilde r_k$ is smooth of relative dimension one.

By \cite[Lem.\,2.12]{GraberMartin}, the $T$-fixed summands of $\EE_0,\EE_k$
induce relative perfect obstruction theories for $\scrF_0\to \foF_0$ and
$\scrF_k\to \foF_k$.  Write $\zeta_0^!,\zeta_k^!$ for the induced virtual
pullbacks followed by the common refined pullback at $q$. Their compatibility
with the splitting diagram follows from \cite[(4.15), Thm.\,5.19 and
Prop.\,5.21]{ACGS2}. The evaluations to $\breve S_i$ agree under splitting, so
this compatibility is preserved by the common refined pullback at $q$. For
$\widetilde r_k$, the compatible relative obstruction theory
$(\GG_k^{\fix})^\vee\to\LL_{\widetilde r_k}$ is an isomorphism, since both
complexes are concentrated in degree zero. Its virtual pullback is therefore the
smooth pullback. Functoriality of virtual pullback \cite[Thm.\,4.8]{Manolache}
and smooth base change give
\begin{equation}
\label{Eqn: outgoing fixed virtual pullback}
  \zeta_k^!\rho_k^*\beta=r_k^*\zeta_0^!\beta,
  \qquad \beta\in A_*^T(\foF_0).
\end{equation}

By total ramification, the $k$th tensor power of the relative tangent line at
$p_\infty$ is the constant line $T_{y_q}F_q$, of weight $-\hbar$. Hence by the
definition of $\SS$ before Lemma~\ref{Lem: outgoing normal structure}, we obtain
\[
  e_T(\zeta_k^*\SS)= e_T\big( r_k^*\cL_p^\vee\otimes \cL_{p_\infty}^\vee \big)
  =-r_k^*\psi_p-\hbar/k.
\]
The residue formula \eqref{Eqn: outgoing normal structure residue}, the
compatibility of virtual pullbacks \eqref{Eqn: outgoing fixed virtual pullback},
and the normalization triangle give
\begin{alignat*}{2}
  \gamma_k(q)
    &\overset{\eqref{Eqn: closed relative localization class}}{=}
      \frac{\sE\cap\zeta_k^!(\alpha_k)}
           {e_T((\EE_k^\vee)^{\mathrm{mov}})}
    &&\overset{\substack{
        \eqref{Eqn: outgoing normal structure residue}\\
        \eqref{Eqn: outgoing fixed virtual pullback}}}{=}
      \frac{r_k^*(\sE\cap\zeta_0^!(\alpha_0))}
           {e_T((\EE_k^\vee)^{\mathrm{mov}})\,e_T(\zeta_k^*\SS)}\\
    &\overset{\substack{
		\eqref{Eqn: outgoing normalization triangle}\\
		\eqref{Eqn: closed relative localization class}}}{=}
      \frac{r_k^*\gamma_0(q)}
           {e_T(\GG_k^{\mathrm{mov}})\,e_T(\zeta_k^*\SS)}
    &&\overset{\eqref{Eqn: GG_k^mov}}{=} 
    \frac{(-1)^{k-1}\hbar}{-r_k^*\psi_p-\hbar/k} \cap r_k^*\gamma_0(q).
\end{alignat*}
Here the insertion $\sE$ pulls back because $r_k$ preserves the evaluations at
the markings labeled by $E_j$. Finally, $r_k$ is the projection from
$\scrF_0(q)\times B\mu_k$, so $r_{k*}r_k^*$ is multiplication by $1/k$. The
projection formula proves the assertion.
\end{proof}


\subsection{Polynomiality, the constant term, and the wall versus closed
comparison}
\label{SS: polynomiality and assembly}

Recall that $d_0=H\cdot A>0$, where $A=\bA(v_0)$ is the class decorating the
central vertex.

\begin{theorem}
\label{Thm: central signed polynomiality}
There is a polynomial $P_{\bdelta_0}(s)\in\QQ[s]$ of degree at most $2d_0-2$
such that
\begin{equation}
\label{Eqn: central signed polynomiality}
  P_{\bdelta_0}(k)=(-1)^kR_{\bdelta_0(k)}(q)
  \quad(k>0),\qquad
  P_{\bdelta_0}(0)=d_0R_{\bdelta_0}.
\end{equation}
\end{theorem}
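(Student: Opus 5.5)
Everything reduces to the localization operator of Proposition~\ref{Prop: outgoing contribution operator}. By the equivariant expression \eqref{Eqn: central number with cut insertions}, Lemma~\ref{Lem: negative section vanishing}, Remark~\ref{Rem: exhaustion of punctured central factors} and equivariant localization, $R_{\bdelta_0(k)}(q)$ is the sum over fixed components of $\deg\gamma_k(q)=\deg r_{k*}\gamma_k(q)$. All fixed components of $\scrM_k$ over $q$ surviving the vanishing lemma are of the form $\scrF_k(q)$ covering some $\scrF_0(q)$. Proposition~\ref{Prop: outgoing contribution operator} then gives
\[
  (-1)^kR_{\bdelta_0(k)}(q)=\sum_{\scrF_0}\deg\Bigl(\Bigl(1+\frac{k\psi_p}{\hbar}\Bigr)^{-1}\cap\gamma_0(q)\Bigr)
  =\sum_{j\ge0}(-k)^j\sum_{\scrF_0}\deg\bigl(\hbar^{-j}\psi_p^j\cap\gamma_0(q)\bigr).
\]
The sum over $\scrF_0$ runs over the fixed loci of $\scrM_0^+$ with $p\mapsto y_q$. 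The $j$-th coefficient is an equivariant number independent of $k$. Hence $(-1)^kR_{\bdelta_0(k)}(q)$ is a polynomial in $k$ once the $j$-sum is shown to be finite. The first step is therefore the nilpotence of $\psi_p$ on each $\scrF_0(q)$: $\psi_p^j$ vanishes for $j>\dim$ of the fixed locus. Since the total degree of $\gamma_0(q)$ is zero, the coefficient of $k^j$ is a genuinely nonequivariant number, namely $\deg(\psi_p^j\cap\gamma_0(q))$ times $\hbar^{-j}$ compensated by the $\hbar$-weight of $\gamma_0(q)$. I will define $P_{\bdelta_0}(s)$ by the same finite sum with $k$ replaced by $s$.

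For the degree bound, I will use that $\gamma_0(q)$ lives on $\scrF_0(q)\subset(\scrM_0^+)^T$. This is a fixed locus in a space of virtual dimension one, cut down by the point condition at $q$ and the insertion $\sE$. A fixed locus of $\scrM_0^+$ with $p\mapsto y_q$ consists of maps to $H_i$ with attached fiber covers, all parametrized by moduli of the contracted and $H_i$-components. Its dimension is bounded by the number of special points on the $H_i$-part minus three. Counting contact points with the boundary $H_i\cap D_j$ of the class $A$, which has $H_i\cdot A$-type intersections of order about $3d_0$, I expect this to give $\dim\scrF_0(q)\le 2d_0-2$, hence $\psi_p^{2d_0-1}=0$. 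This dimension estimate is the step I expect to be the main obstacle. If it fails, I will fall back to the cruder bound from the actual virtual-dimension count. That fallback still yields polynomiality, and I will check the sharp degree separately against the explicit formula in \S\ref{Sec: finite wall formulas}.

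For the constant term, set $s=0$. Then $P_{\bdelta_0}(0)=\sum_{\scrF_0}\deg\gamma_0(q)=\deg\bigl((\varepsilon^+)^*[q]\cup\sE\cap[\scrM_0^+]^\virt\bigr)$, by the same localization run backwards on $\scrM_0^+$. For this I need the locus $p\mapsto y_q$ to capture all fixed contributions. The remaining ones, with $p$ on $S_i$, vanish by Lemma~\ref{Lem: negative section vanishing} or by the choice of weights. Next I will apply the divisor axiom to the forgetful map $\scrM_0^+\to\scrM_0$, which is compatible with the punctured obstruction theories since $p$ has contact order zero. Because $\varepsilon^{+*}[q]=\ev_p^*a_i^*[q]=\ev_p^*[F_q]$, this gives $(F_q\cdot A)\,R_{\bdelta_0}$. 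Finally, on $Y_i$ the fiber class satisfies $F_q\cdot A=H\cdot A=d_0$. This holds because $F_q$ is a fiber and $H|_{Y_i}$ restricts to the fiber class, using $d(h_i)=1$, $d(f_i)=d(e_i)=0$ and the intersection numbers of \S2. The result is $P_{\bdelta_0}(0)=d_0R_{\bdelta_0}$.
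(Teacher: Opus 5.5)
Your treatment of the values at $k>0$ (localization, Lemma~\ref{Lem: negative section vanishing}, Proposition~\ref{Prop: outgoing contribution operator}) matches the paper. So does your treatment of the constant term (divisor axiom for forgetting $p$, with $F_q\cdot A=d_0$). The gap is the degree bound $2d_0-2$, which is the step you flag yourself, and your route to it fails.

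\textbf{Why the dimension count fails.} $\dim\scrF_0(q)$ is not bounded by $2d_0-2$. Contracted components at $T$-fixed points of $H_i$ that carry several covers of exceptional fiber components add moduli that do not depend on $d_0$. For example, take $\bdelta_0$ on $Y_1$ of class $h_1+3e_2$ with three legs of contact order $u_{e_2}$. This occurs as the central vertex of a rigid closed type of class $4\beta$ whose three branches have class $h_2+e_3$. Its fixed maps include a line $C_H\to H_1$ through $y_q$, glued at $H_1\cap e_2$ to a contracted component carrying three degree-one covers of $e_2$. The cross-ratio on the contracted component gives $\dim\scrF_0(q)\geq1>0=2d_0-2$.

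\textbf{Why the fallback fails.} Checking against \S\ref{Sec: finite wall formulas} is not admissible. Those formulas depend on Assumption~\ref{Ass: explicit slab shape}, which this theorem does not assume. They also only see the aggregate $P_d$, not the individual $P_{\bdelta_0}$, whose top coefficients could cancel in the sum. And Theorem~\ref{Thm: finite wall formula} uses this very bound as input.

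\textbf{The missing idea.} You need to bound the nilpotence order of $\psi_p$, not the dimension of $\scrF_0(q)$.
\begin{enumerate}
\item Compose with $a_i\colon Y_i\to S_i$, forget every marking except $p$, and stabilize. This gives $\nu\colon\scrF_0(q)\to\overline{\scrM}_{0,1}(S_i,d_0)_q$, whose target has dimension $2d_0-2$.
\item Stabilization is an isomorphism near $p$. The component through $p$ either maps to $H_i$, so has positive ruling degree, or it lies in a contracted tree at $y_q$. Such a tree has no $E_j$-marking, since those map to the $F_j$, which are disjoint from $F_q$. All of its neighbours have positive ruling degree, so it survives.
\item Hence $\cL_p\simeq\nu^*\cL_p^{\mathrm{st}}$ equivariantly, with trivial $T$-linearization on the target. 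So $\psi_p$ is a nonequivariant class satisfying $\psi_p^{2d_0-1}=0$ on $\scrF_0(q)$, however large $\scrF_0(q)$ is.
\end{enumerate}
In the example above, $\psi_p=0$ even though $\dim\scrF_0(q)\geq1$.
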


\begin{proof}
Compose with $a_i\colon Y_i\to S_i$, forget all markings except $p$, and
stabilize to obtain
\[
  \nu\colon\scrF_0(q)\longrightarrow
    \overline{\scrM}_{0,1}(S_i,d_0)_q.
\]
The subscript denotes evaluation at $q$. Stabilization is an isomorphism near
$p$. Indeed, if the component carrying $p$ is nonconstant, its image lies in
$H_i$, since $f(p)=y_q$ and the $T$-fixed map avoids $P$. It therefore has
positive ruling degree. A maximal subtree contracted by the original map and
containing $p$ contains none of the markings labeled by $E_j$, and all adjacent
components have positive ruling degree, so it also survives. Thus
$\cL_p\simeq\nu^*\cL_p^{\mathrm{st}}$ equivariantly, with trivial
$T$-linearization on the right. Taking $c_1$ and noting that the codomain of
$\nu$ has dimension $2d_0-2$ gives
\begin{equation}
\label{Eqn: ruling cotangent bound}
  \psi_p^{2d_0-1}=0\qquad\text{on }\scrF_0(q).
\end{equation}

Summing over the retained fixed components, define
\begin{equation}
\label{Eqn: central polynomial coefficients}
  P_{\bdelta_0}(s)
    =\sum_{\scrF_0(q)}
      \deg\left(\left(1+\frac{s\psi_p}{\hbar}\right)^{-1}
        \cap\gamma_0(q)\right).
\end{equation}
The sum is finite, and the inverse truncates at degree $2d_0-2$. Each
coefficient has equivariant degree zero, hence lies in $\QQ$.
Proposition~\ref{Prop: outgoing contribution operator} and Lemma~\ref{Lem:
negative section vanishing} give the asserted values at positive integers.

For the constant term, the map $\mathrm{ft}\colon\scrM_0^+\to\scrM_0$ forgetting
the contact order zero marking $p$ and stabilizing is described in
\cite[Thm.\,5.5(2)--(3)]{associativity} as the saturation of the universal
punctured curve over $\scrM_0$, with compatible relative obstruction theories.
This saturation is an isomorphism away from the remaining marked sections. Since
the markings labeled by $E_j$ map to the fibers $F_j$, disjoint from $F_q$, the
image of $p$ after stabilization avoids these sections on $\ev_p^{-1}(F_q)$,
with $\ev_p$ as in \eqref{Eqn: ev_p}. Thus a neighborhood of this locus is
identified with an open substack of the universal curve over $\scrM_0$. The
usual proof of the divisor axiom therefore gives
\[
  \mathrm{ft}_*\bigl((\varepsilon^+)^*[q]\cap[\scrM_0^+]_T^\virt\bigr)
    =(F_q\cdot A)[\scrM_0]_T^\virt=d_0[\scrM_0]_T^\virt.
\]
Since $\sE$ pulls back under $\mathrm{ft}$, localization and the projection
formula yield
\[
  P_{\bdelta_0}(0) = \sum_{\scrF_0(q)}\deg\gamma_0(q)
    =\deg\bigl(((\varepsilon^+)^*[q]\cup\sE)
        \cap[\scrM_0^+]_T^\virt\bigr)
    =d_0\deg\bigl(\sE\cap[\scrM_0]_T^\virt\bigr)
     =d_0R_{\bdelta_0}.
\]
\end{proof}

\begin{corollary}
\label{Cor: wall versus closed comparison}
For every $d>0$, there are polynomials $P_{d,i}(s)\in\QQ[s]$, $i=1,2,3$, of
degree at most $2d-2$ satisfying
\begin{equation}
\label{Eqn: assembled localization comparison}
  P_{d,i}(k)=(-1)^kW_{d,-k\be_i}\quad(k>0),\qquad
  \sum_{i=1}^3P_{d,i}(0)=dN_d.
\end{equation}
Thus Assumption~\ref{Ass: wall polynomiality} holds, and the limit in
Proposition~\ref{Prop: asymptotic period} equals $-dN_d$.
\end{corollary}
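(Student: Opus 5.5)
Write $W_{d,-k\be_i}$ as a finite sum of central factors from the splitting of \S\ref{SS: cutting at the last vertex}, apply Theorem~\ref{Thm: central signed polynomiality} to each factor, and then identify the constant terms using \eqref{Eqn: vertex weighted closed splitting}.

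\emph{Step 1: reduce to corner types.} Fix $d>0$, $i$ and $k>0$. By Lemma~\ref{Lem: relevant wall root}, every type contributing to $W_{d,-k\be_i}$ has its root $v_\partial$ in the interior of a boundary edge adjacent to the corner of $\partial B_0$ in direction $\be_i$. Choose one such edge, for $i=1$ the edge from $(1,0)$ to $(0,1)$. With $|{-k\be_i}|=k$, the definition \eqref{Eqn: aggregate wall invariant} and Proposition~\ref{Prop: wall degree comparison} give
\[
  W_{d,-k\be_i}=\sum_{\bdelta}J_{\bdelta}(q).
\]
The sum runs over corner specializations $\bdelta$ with outgoing contact order $-k\be_i$ and degree $d$. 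Proposition~\ref{Prop: numerical splitting at a vertex} then writes each term as $\frac{m_\delta\mu_\delta}{|\Aut(\bdelta)|}R_{\bdelta_0}(q)\prod_j b_j$.

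\emph{Step 2: index the corner types by closed data.} Show that $(\bdelta,v_0)\mapsto\bdelta(k)$ is a bijection from pairs $(\bdelta,v_0)$, with $\bdelta$ a rigid closed type of class $d\beta$ and $v_0$ a vertex over the $i$-th corner with $d(\bA(v_0))>0$, onto the corner types with nonzero contribution.
\begin{enumerate}
\item For surjectivity, use Remark~\ref{Rem: exhaustion of punctured central factors}. A nonzero $R_{\bdelta_0}(q)$ forces a surviving fixed map, hence the outgoing component is a degree-$k$ cover of $F_q$. Subtracting $kf_i$ from the root class gives a closed type with unchanged branches.
\item Check that the result has total class $d\beta$. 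It has no legs, and by Lemma~\ref{Lem: boundary tree contact} its total curve contact order is $u_\tau-ku_{f_i}=0$. The intersection table then forces the class to be $d'\beta$, and $d'=d$ because $d(f_i)=0$.
\item Check that $d_0=d(A)>0$, where $A=\bA(v_0)$. If $d_0=0$, argue that the central factor vanishes. The fixed locus $\scrF_0(q)$ needs $p\mapsto y_q\in H_i$, which requires positive ruling degree on the component through $p$. Alternatively, $\varepsilon$ is then constrained without curves of positive $H$-degree through a general fiber. I expect this case to need an extra argument, possibly a separate localization showing that these contributions vanish.
\end{enumerate}
By \eqref{Eqn: closed punctured gluing data}, the multiplicities, automorphism groups and branch numbers $b_j$ agree under this bijection.

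\emph{Step 3: assemble.} Put
\[
  P_{d,i}(s)=\sum_{(\bdelta,v_0)}\frac{m_\delta\mu_{\delta,v_0}}{|\Aut(\bdelta,v_0)|}\Bigl(\prod_j b_j\Bigr)P_{\bdelta_0}(s),
\]
summing over pairs with $v_0$ at the $i$-th corner. The sum is finite by the finiteness argument after \eqref{Eqn: aggregate wall invariant}. Since $d_0\le d$, Theorem~\ref{Thm: central signed polynomiality} bounds the degree by $2d-2$ and gives $P_{d,i}(k)=(-1)^kW_{d,-k\be_i}$. At $s=0$ we get $P_{\bdelta_0}(0)=d(\bA(v_0))R_{\bdelta_0}$. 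Summing over $i=1,2,3$ covers every vertex, since all vertices of a rigid closed type lie at boundary corners, and \eqref{Eqn: vertex weighted closed splitting} then gives $dN_d$. Proposition~\ref{Prop: asymptotic period} yields the limit $-dN_d$.

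The main obstacle is Step 2. One must check that the bijection is compatible with the isomorphism classes and automorphism counts on both sides, in particular $\Aut(\bdelta(k))\simeq\Aut(\bdelta,v_0)$ when several vertices share a corner. One must also dispose of central vertices of $H$-degree zero.
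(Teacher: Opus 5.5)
Your outline follows the paper's proof: express $W_{d,-k\be_i}$ as a sum over corner types, match these with pairs $(\bdelta,v_0)$ using Remark~\ref{Rem: exhaustion of punctured central factors}, define $P_{d,i}$ as the weighted sum of the $P_{\bdelta_0}$, and read off the constant term from \eqref{Eqn: vertex weighted closed splitting}. The automorphism compatibility you worry about is not an obstacle, since it is already recorded in \eqref{Eqn: closed punctured gluing data}. However, Step~2(c) is a genuine gap, and the remedy you suggest points the wrong way.

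\textbf{Step~2(c).} The case $d(\bA(v_0))=0$ must be excluded first, because Theorem~\ref{Thm: central signed polynomiality}, Remark~\ref{Rem: exhaustion of punctured central factors} and the sum \eqref{Eqn: vertex weighted closed splitting} all assume $d_0>0$. Vanishing of the central factor by localization is not the mechanism. For $a=0$ the central factor, of class $kf_i$, is the degree-zero wall contribution itself and does not vanish in general. The missing observation is elementary.
\begin{enumerate}
\item If $d_0=0$, then for every map in $\varepsilon_0^{-1}(q)$ the composite $a_i\circ f$ is constant on each component. By connectedness it is constant with value $q$, so the whole central map lies in $F_q$.
\item The marking labeled $E_j$ must map to $F_j=Y_i\cap X_{\bsigma(E_j)}$, a fiber component over a special point of $S_i$. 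Since $q\in\breve S_i$, the fiber $F_q$ is disjoint from every $F_j$.
\item Hence $\varepsilon_0^{-1}(q)=\emptyset$ unless $a=0$. But $a=0$ means $\bdelta$ is the single central vertex, so $d=d_0=0$.
\end{enumerate}
Thus for $d>0$ only pairs with $d_0>0$ contribute, which is exactly what your surjectivity step and the constant-term identification need.

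\textbf{Step~2(b).} There is also a smaller flaw here. Vanishing of the total curve contact order does not force the class to be a multiple of $\beta$. By \eqref{Eqn: curve contact orders}, $u_\ell=0$, so every class $d\beta+m\ell$ has zero curve contact order; Remark~\ref{Rem: corner trees} makes the same point. Lemma~\ref{Lem: boundary tree contact} only sees the differences $A\cdot D_1-A\cdot D_3$ and $A\cdot D_2-A\cdot D_3$. You need the full degree--contact relation, applied separately to each of $P,D_1,D_2,D_3$. For a type without legs it gives $A\cdot P=A\cdot D_j=0$ for all $j$. The intersection table, whose common kernel is $\ZZ\beta$, then yields $d'\beta$; equivalently, check $u_{\mathrm{out}}(\varphi_D)=k\,f_i\cdot D$ for each component $D$. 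With these two repairs your argument coincides with the paper's.
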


\begin{proof}
In \eqref{Eqn: root numerical reduction}, a map parametrized by
$\varepsilon_0^{-1}(q)\subset\scrM(X,\bdelta_0)$ with $d(\bA(v_0))=0$ has image
in $F_q$. Since $F_q\cap F_j=\varnothing$ for all $j$, this forces $a=0$ and
hence $d=0$. Thus only $d_0>0$ contributes.

Remark~\ref{Rem: exhaustion of punctured central factors} gives a closed pair
$(\bdelta,v_0)$ for every nonzero punctured contribution, preserving the
branches and tropical parameter cone. Thus $\bdelta$ is rigid, and the
degree--contact relation and the intersection table in \S2 give total class
$d\beta$. Adding the outgoing leg and $kf_i$ reverses this operation, with
gluing data as in \eqref{Eqn: closed punctured gluing data}.

For each $i$, define
\begin{equation}
\label{Eqn: directional localization polynomials}
  P_{d,i}(s)
    =\sum_{[(\bdelta,v_0)]}
       \frac{m_\delta\mu_{\delta,v_0}}{|\Aut(\bdelta,v_0)|}
       P_{\bdelta_0}(s)\prod_{j=1}^a b_j,
\end{equation}
where the sum is that of \eqref{Eqn: vertex weighted closed splitting},
restricted to pairs with $\bsigma(v_0)$ the ray of $D_i$.

By Proposition~\ref{Prop: wall degree comparison} and~\eqref{Eqn: point
constrained delta contribution}, \eqref{Eqn: root numerical reduction},
\eqref{Eqn: closed punctured gluing data}, replacing $P_{\bdelta_0}(s)$ in this
sum by $R_{\bdelta_0(k)}(q)$ gives $W_{d,-k\be_i}$. Theorem~\ref{Thm: central
signed polynomiality} therefore gives $P_{d,i}(k)=(-1)^kW_{d,-k\be_i}$ and,
since $d_0\le d$, the degree bound.

Substituting $P_{\bdelta_0}(0)=d_0R_{\bdelta_0}$ and summing over $i$ gives
$\sum_iP_{d,i}(0)=dN_d$ by \eqref{Eqn: vertex weighted closed splitting}. The
limit follows from Proposition~\ref{Prop: asymptotic period}.
\end{proof}


\section{Finite wall formulas and tree expansions}
\label{Sec: finite wall formulas}

We first record a consequence of the polynomiality and constant-term comparison
in \eqref{Eqn: assembled localization comparison}.

\begin{theorem}
\label{Thm: finite wall formula}
For every $d>0$ we have
\begin{equation}
\label{Eqn: finite wall formula}
  N_d=-\frac1d\sum_{i=1}^3\sum_{k=1}^{2d-1}
    \binom{2d-1}{k}W_{d,-k\be_i}.
\end{equation}
\end{theorem}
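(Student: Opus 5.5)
The plan is to derive the formula from Corollary~\ref{Cor: wall versus closed comparison} by a finite-difference identity. That corollary supplies, for each $i$, a polynomial $P_{d,i}(s)\in\QQ[s]$ of degree at most $2d-2$ with $P_{d,i}(k)=(-1)^kW_{d,-k\be_i}$ for $k>0$. It also gives $\sum_iP_{d,i}(0)=dN_d$. The only additional input is the standard fact that the $(2d-1)$-st forward difference annihilates polynomials of degree at most $2d-2$.

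Concretely, for any polynomial $P$ of degree at most $n-1$ we have
\[
  (\Delta^nP)(0)=\sum_{k=0}^{n}(-1)^{n-k}\binom{n}{k}P(k)=0 .
\]
This follows either from $\Delta$ lowering degree by one, or from the Newton expansion already used in the proof of Proposition~\ref{Prop: asymptotic period}. Apply it with $n=2d-1$ and $P=P_{d,i}$. Since $n$ is odd, $(-1)^{n-k}=-(-1)^k$, so the identity becomes
\[
  \sum_{k=0}^{2d-1}\binom{2d-1}{k}(-1)^kP_{d,i}(k)=0.
\]
Separating the $k=0$ term gives
\[
  P_{d,i}(0)=-\sum_{k=1}^{2d-1}\binom{2d-1}{k}(-1)^kP_{d,i}(k).
\]
For $k\geq1$ substitute $(-1)^kP_{d,i}(k)=(-1)^{2k}W_{d,-k\be_i}=W_{d,-k\be_i}$. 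Summing over $i=1,2,3$ and using $\sum_iP_{d,i}(0)=dN_d$, then dividing by $d>0$, yields \eqref{Eqn: finite wall formula}.

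The argument is essentially bookkeeping. The only real content is the degree bound $2d-2$ from Corollary~\ref{Cor: wall versus closed comparison}, which ultimately rests on the nilpotence $\psi_p^{2d_0-1}=0$ in Theorem~\ref{Thm: central signed polynomiality} together with $d_0\le d$. The step needing care is the sign convention, since odd $n$ is what turns $(-1)^{n-k}$ into $-(-1)^k$ and makes the signs cancel cleanly. One should also check that the conventions of \eqref{Eqn: W_{d,k}} match: $\frac1k\bigl([t^dx^k]+[t^dy^k]+[t^dz^k]\bigr)\log f=\sum_iW_{d,-k\be_i}$, read off from \eqref{Eqn: aggregate canonical wall function}. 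With this identification the formula is the one stated in Theorem~\ref{Thm: intro wall closed comparison}.
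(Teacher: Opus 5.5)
Your proposal is correct and is essentially the paper's proof. The degree bound $2d-2$ from Corollary~\ref{Cor: wall versus closed comparison} makes the $(2d-1)$-st finite difference of $P_{d,i}$ vanish, which gives $P_{d,i}(0)=-\sum_{k=1}^{2d-1}\binom{2d-1}{k}W_{d,-k\be_i}$, and summing over $i$ with $\sum_iP_{d,i}(0)=dN_d$ proves the formula, with the same sign bookkeeping as yours.
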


\begin{proof}
Since $\deg P_{d,i}\leq2d-2$, its $(2d-1)$-st finite difference vanishes. Thus
\[
  P_{d,i}(0)=\sum_{k=1}^{2d-1}(-1)^{k-1}
    \binom{2d-1}{k}P_{d,i}(k)
  =-\sum_{k=1}^{2d-1}\binom{2d-1}{k}W_{d,-k\be_i}.
\]
Summing over $i$ and using $\sum_iP_{d,i}(0)=dN_d$ proves the assertion.
\end{proof}

For the remainder of this section we make the following assumption on the
canonical wall function of \eqref{Eqn: aggregate canonical wall function}.

\begin{assumption}
\label{Ass: explicit slab shape}
In $\QQ\lfor x,y,z,t\rfor/(xyz-t)$, we have
\[
  f=1+x+y+z+g(t),\qquad g(t)\in t\QQ\lfor t\rfor.
\]
\end{assumption}

The normalization of $\log f$ determines $g(t)$ uniquely under this assumption.
Theorem~\ref{Thm: finite wall formula} does not require the assumption.

We use the exponential tree expansion at the end of \cite[\S5]{ICM}. Consider
finite rooted trees whose leaves have labels $x,y,z$ and whose non-leaf vertices
have at least two children, without an ordering of the children. For each vertex
$v$, let $(a_v,b_v,c_v)$ count the leaves of each label in the subtree with root
vertex $v$. Require
\[
  a_v\be_1+b_v\be_2+c_v\be_3\neq0
\]
at every vertex, or equivalently that $a_v,b_v,c_v$ are not all equal. Let
$\cT_{d,k}^{\,i}$ be the set of isomorphism classes of such trees with $d+k$
leaves of the $i$-th label and $d$ of each other label, indexing $x,y,z$ by
$1,2,3$. Note $\cT_{d,k}^i$ can be interpreted as a set of isomorphism classes
of types of genus zero tropical stable maps to $\RR^2$. Write $\widehat
V_\Gamma$ for the non-leaf vertices and $\Aut(\Gamma)$ for the automorphisms
preserving the root vertex and the leaf labels. Repeated isomorphic branches are
allowed.

\begin{corollary}
\label{Cor: finite tree formula}
Under Assumption~\ref{Ass: explicit slab shape},
\begin{equation}
\label{Eqn: finite tree formula}
  N_d=-\frac1d\sum_{i=1}^3\sum_{k=1}^{2d-1}
    \frac1k\binom{2d-1}{k}
    \sum_{\Gamma\in\cT_{d,k}^{\,i}}
      \frac{(-1)^{|\widehat V_\Gamma|}}{|\Aut(\Gamma)|}.
\end{equation}
\end{corollary}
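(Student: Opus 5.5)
The plan is to combine Theorem~\ref{Thm: finite wall formula} with an explicit expansion of $\log f$ under Assumption~\ref{Ass: explicit slab shape}. It then suffices to show that, for $d,k>0$,
\[
  W_{d,-k\be_i}=\frac{1}{k^2}\sum_{\Gamma\in\cT_{d,k}^{\,i}}
    \frac{(-1)^{|\widehat V_\Gamma|}}{|\Aut(\Gamma)|}.
\]
The displayed identity should in fact come with a factor $1/k$ rather than $1/k^2$, because \eqref{Eqn: W_{d,k}} already divides by $k$. So the target is really the coefficient statement
\[
  [t^dx^k]\log f=\sum_{\Gamma\in\cT_{d,k}^{\,1}}
  \frac{(-1)^{|\widehat V_\Gamma|}}{|\Aut(\Gamma)|},
\]
and similarly for $y,z$. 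By \eqref{Eqn: aggregate canonical wall function}, the coefficient of $t^dx^k$ in $\log f$ is $|{-k\be_1}|\,W_{d,-k\be_1}=kW_{d,-k\be_1}$. Substituting into \eqref{Eqn: finite wall formula} then gives the factor $\frac1k\binom{2d-1}{k}$ in \eqref{Eqn: finite tree formula}.

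The coefficient identity should be a formal consequence of the exponential tree expansion at the end of \cite[\S5]{ICM}. I would set it up as follows. Under the assumption, $\log f$ is the unique element $L$ of the form (sum of terms $c\,t^d\vartheta_{-m}$ with $m\neq0$) satisfying $\exp(L)=1+x+y+z+g(t)$. Equivalently, writing $L=\log(1+x+y+z)+\log\bigl(1+\frac{g}{1+x+y+z}\bigr)$, the normalization condition removes exactly the monomials with tangent exponent $0$, that is, pure powers of $t$.

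The ICM expansion writes $\log f$ as a sum over rooted trees. Leaves carry the monomials $x,y,z$. Each non-leaf vertex contributes the sign $-1$ and prunes the subtree exponent whenever $a_v\be_1+b_v\be_2+c_v\be_3=0$, which encodes the subtraction of the $t^{*}$-part by $g$. Expanding $\log(1+u)=\sum(-1)^{n-1}u^n/n$ and recursively re-expanding, the terms group into unordered trees with weights $(-1)^{|\widehat V|}/|\Aut|$.

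For the bookkeeping, a tree in $\cT_{d,k}^{\,1}$ has leaf counts $(d+k,d,d)$. Its monomial is $x^{d+k}y^dz^d=t^dx^k$, which is tangent exponent $k\be_1$ at degree $d$, as required. Vertices whose subtree has balanced leaf counts would produce a pure $t$-power, and these are exactly the ones cancelled by the normalization. That is why trees with such vertices are excluded.

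The main obstacle is to verify that the sign and automorphism conventions of \cite{ICM} match exactly: the weight is $(-1)^{|\widehat V_\Gamma|}$ with no extra factors from vertex valence. Concretely, I must check that the recursion
\[
  L=\sum_{n\ge1}\frac{(-1)^{n-1}}{n}\Big(x+y+z+g-\text{(normalization)}\Big)^n
\]
resummed over trees yields $1/|\Aut(\Gamma)|$ with sign per internal vertex. I would try first to derive the fixed-point equation $L=\log(1+x+y+z)$ minus its $t$-pure projection, iterated, and compare it with the standard Lagrange-type exponential tree formula $\sum_\Gamma\prod_v(\cdots)/|\Aut|$, where the generating function of unordered children of a vertex gives $\exp$ minus linear terms. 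I would sanity-check low orders ($d=1$, $k=1$) numerically before trusting the sign convention.

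Assembling everything then gives \eqref{Eqn: finite tree formula}.
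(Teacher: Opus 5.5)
Your proposal is correct and is the paper's proof. The paper cites the exponential tree expansion of \cite[\S5]{ICM} for $kW_{d,-k\be_i}=[t^dx_i^k]\log f=\sum_{\Gamma\in\cT_{d,k}^{\,i}}(-1)^{|\widehat V_\Gamma|}/|\Aut(\Gamma)|$, which is your corrected target rather than the $1/k^2$ version you first wrote, and then substitutes it into Theorem~\ref{Thm: finite wall formula}; your side remark about subtracting the $t$-pure part of $\log(1+x+y+z)$ would not suffice (it gives $-3$ instead of $-1$ for $[tx]\log f$), and the equation that actually characterizes the normalized $\log f$ is $T=(x+y+z)-(1-\Pi_0)(e^T-1-T)$, i.e.\ $(1-\Pi_0)e^T=x+y+z$ with $\Pi_0T=0$, where $\Pi_0$ projects to pure powers of $t$.
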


\begin{proof}
The exponential version of the ICM formula gives
\[
  kW_{d,-k\be_i}=\sum_{\Gamma\in\cT_{d,k}^{\,i}}
    \frac{(-1)^{|\widehat V_\Gamma|}}{|\Aut(\Gamma)|},
\]
since the product of the leaf labels is $t^dx_i^k$, where
$(x_1,x_2,x_3)=(x,y,z)$. Substitution in \eqref{Eqn: finite wall formula} proves
the formula. The sum is finite since there are at most $5d-1$ leaves and every
non-leaf vertex has at least two children.
\end{proof}

We next give an explicit generating formula, retaining $t$ as the canonical
parameter. Put
\[
  a_n=\frac{(-1)^{n-1}(3n-1)!}{(n!)^3},\qquad
  b_n=a_n\sum_{j=n+1}^{3n-1}\frac1j,
\]
and define series in the canonical parameter by
\begin{equation}
\label{Eqn: canonical A and B}
  A(t)=\sum_{n\geq1}a_n\frac{t^n}{(1+g(t))^{3n}},\qquad
  B(t)=\sum_{n\geq1}b_n\frac{t^n}{(1+g(t))^{3n}}.
\end{equation}
The normalization condition on $\log f$ gives
\begin{equation}
\label{Eqn: normalization and A}
  A(t)=-\log(1+g(t)).
\end{equation}
Permutation of $x,y,z$ identifies the three directional wall invariants under
Assumption~\ref{Ass: explicit slab shape}. Hence the polynomials $P_{d,i}$
agree, and we write $P_d=P_{d,i}$.

\begin{theorem}
\label{Thm: explicit closed formula}
Under Assumption~\ref{Ass: explicit slab shape},
\begin{equation}
\label{Eqn: closed A and B formula}
  \sum_{d\geq1}dN_dt^d=\frac32A(t)^2+3B(t).
\end{equation}
Equivalently,
\begin{equation}
\label{Eqn: closed expanded formula}
  \sum_{d\geq1}dN_dt^d
  =\frac32\log^2(1+g(t))+\sum_{n\geq1}\frac{(-1)^{n-1}}n\frac{(3n)!}{(n!)^3}
    \left(\sum_{j=n+1}^{3n-1}\frac1j\right)
    \frac{t^n}{(1+g(t))^{3n}}.
\end{equation}
\end{theorem}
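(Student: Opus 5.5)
The plan is to compute the polynomial $P_d=P_{d,i}$ of Corollary~\ref{Cor: wall versus closed comparison} in closed form from the explicit slab function, and to read off its value at $k=0$. By that corollary and the symmetry under permutation of $x,y,z$, we have $dN_d=3P_d(0)$. Here $P_d(k)=(-1)^kW_{d,-k\be_1}$ for $k>0$, and $W_{d,-k\be_1}=\frac1k[t^dx^k]\log f$.

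\textbf{Step 1: the coefficient of $x^k$ in $\log f$.} Write $f=(1+g)(1+X+Y+Z)$ with $X=x/(1+g)$, and similarly $Y,Z$. Then $XYZ=s:=t/(1+g)^3$, and by \eqref{Eqn: normalization and A} this equals $te^{3A}$, while $(1+g)^{-k}=e^{kA}$. In $\QQ\lfor x,y,z,t\rfor/(xyz-t)$ the monomials contributing to $x^k$, $k>0$, are $x^{n+k}y^nz^n=t^nx^k$. Expanding $\log(1+X+Y+Z)=\sum_m\frac{(-1)^{m-1}}m(X+Y+Z)^m$ by the multinomial theorem, with $m=3n+k$, gives
\[
  (-1)^k\frac1k[x^k]\log f
  =-\sum_{n\ge0}\frac{(-1)^{n}}{(n!)^2}\,
    \frac{(3n+k-1)!}{k\,(n+k)!}\,t^n e^{(3n+k)A}.
\]
We then take the coefficient of $t^d$ on both sides.

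\textbf{Step 2: polynomial continuation in $k$.} Since $A\in t\QQ\lfor t\rfor$, the $t^d$-coefficient of each $e^{(3n+k)A}$ is polynomial in $k$. The term $n=0$ equals $-\frac1{k^2}e^{kA}$. For $n\ge1$, the ratio $(3n+k-1)!/(n+k)!=\prod_{j=n+k+1}^{3n+k-1}j$ is a polynomial in $k$, so the only non-polynomial part is the explicit factor $1/k$. Hence the right-hand side, in degree $d>0$, is a polynomial in $k$ plus $c_1/k+c_2/k^2$.

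The $1/k^2$ part comes only from $n=0$ in degree $t^0$, so it is irrelevant for $d>0$. The $1/k$ part is
\[
  -\frac Ak+\frac1k\sum_{n\ge1}a_nt^ne^{3nA}=\frac1k\bigl(-A+A(t)\bigr)=0.
\]
This cancellation uses exactly the normalization \eqref{Eqn: normalization and A}, and I expect it to be the crucial point.

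Consequently, for each $d>0$ the $t^d$-coefficient is a polynomial in $k$. It agrees with $P_d$ at all positive integers, so it equals $P_d$, and $P_d(0)$ is the constant term at $k=0$ of the above expression with its poles removed.

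\textbf{Step 3: the constant term.} For $n=0$, the $k^0$ coefficient of $-\frac1{k^2}e^{kA}$ is $-A^2/2$. For $n\ge1$, the constant term is minus the derivative at $k=0$ of
\[
  \frac{(-1)^n}{(n!)^2}\,\frac{(3n+k-1)!}{(n+k)!}\,t^ne^{(3n+k)A}.
\]
Using $\frac{d}{dk}\log\frac{(3n+k-1)!}{(n+k)!}\big|_{k=0}=\sum_{j=n+1}^{3n-1}\frac1j$, this derivative is $-a_nt^ne^{3nA}\bigl(\sum_{j=n+1}^{3n-1}\frac1j+A\bigr)$, so the constant term is its negative. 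Summing over $n\ge1$ gives $B(t)+A(t)^2$.

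Altogether, $\sum_dP_d(0)t^d=\frac12A^2+B$, and multiplying by $3$ gives \eqref{Eqn: closed A and B formula}.

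Finally, \eqref{Eqn: closed expanded formula} follows by substituting $A=-\log(1+g)$ and using $3a_n=\frac{(-1)^{n-1}}n\frac{(3n)!}{(n!)^3}$. The main care needed is bookkeeping. First, the reduction $x^{n+k}y^nz^n=t^nx^k$ must be exactly the monomial convention of \S\ref{SS: wall invariants}. Second, one must justify that pole cancellation is performed coefficientwise in $t$, which is legitimate since each $t^d$-coefficient involves only finitely many $n$.
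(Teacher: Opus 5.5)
Your proposal is correct and follows essentially the paper's proof: the same extraction of $[t^dx^k]\log f$, giving $-\frac1k(1+g)^{-k}\bigl(\frac1k+\sum_{n\geq1}c_n(k)\,t^n(1+g)^{-3n}\bigr)$, and the same Laurent expansion at $k=0$ using $(1+g)^{-k}=e^{kA}$, $c_n(0)=-a_n$ and $c_n'(0)=-b_n$. The one difference is that you check directly that the $1/k$ pole cancels by the normalization, whereas the paper obtains the absence of poles from the polynomiality in Corollary~\ref{Cor: wall versus closed comparison}, since the rational function agrees with $P_d$ at all positive integers.
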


\begin{proof}
For $n\geq1$, put
\[
  c_n(s)=\frac{(-1)^n}{(n!)^2}\prod_{j=n+1}^{3n-1}(s+j).
\]
Expanding $\log f$ and extracting the monomials $t^dx^k$ gives, for $d>0$,
\begin{equation}
\label{Eqn: signed wall extraction}
  P_d(s)=-\frac1s[t^d](1+g(t))^{-s}
    \left(\frac1s+\sum_{n\geq1}c_n(s)
      \frac{t^n}{(1+g(t))^{3n}}\right).
\end{equation}
Initially this identity holds for positive integers $s=k$, using the coefficient
$kW_{d,-k\be_1}$ in $\log f$ and the sign $(-1)^k$. For each $d$, the right-hand
side is a rational function of $s$, so equality on all positive integers proves
\eqref{Eqn: signed wall extraction} identically.

Since $c_n(0)=-a_n$ and $c_n'(0)=-b_n$, the expression in parentheses is
$s^{-1}-A(t)-sB(t)+O(s^2)$. By \eqref{Eqn: normalization and A},
$(1+g(t))^{-s}=\exp(sA(t))$. Consequently,
\[
  -\frac{e^{sA(t)}}s
    \bigl(s^{-1}-A(t)-sB(t)+O(s^2)\bigr)
  =-s^{-2}+\frac12A(t)^2+B(t)+O(s).
\]
The term $-s^{-2}$ has degree zero in $t$. Taking $s=0$ after extracting $[t^d]$
therefore gives $P_d(0)=[t^d](A(t)^2/2+B(t))$. Now $3P_d(0)=dN_d$ proves
\eqref{Eqn: closed A and B formula}, and \eqref{Eqn: normalization and A} gives
the expanded form.
\end{proof}

\begin{remark}
For comparison with Givental's formula, the noncanonical parameter is
$h=t/(1+g(t))^3$. In this notation $\log t=\log h-3\sum_{n\geq1}a_nh^n$ is the
usual mirror map, and \eqref{Eqn: closed A and B formula} is the hypergeometric
expression obtained from the local $I$-function and the mirror theorem, as in
\cite[\S10.2.1]{CoatesIritani}. The proof above derives the formula from the
wall comparison.
\end{remark}

For example, \eqref{Eqn: signed wall extraction} gives
\[
\begin{aligned}
  P_1(s)&=1,\\
  P_2(s)&=-\frac{s^2+4s+15}{4},\\
  P_3(s)&=\frac{s^4+12s^3+103s^2+348s+976}{36},\\
  P_4(s)&=-\frac{
    s^6+24s^5+370s^4+3072s^3+18961s^2+59400s+147996}{576}.
\end{aligned}
\]
Their constant terms recover $dN_d/3$. The values through degree twelve are
listed below and agree with \cite[App.\,C]{CoatesIritani}.
\begin{center}
\renewcommand{\arraystretch}{1.5}
\begin{tabular}{c r|c r|c r}
\toprule
$d$ & $N_d$ & $d$ & $N_d$ & $d$ & $N_d$ \\
\midrule
1 & $3$ & 5 & $211878/125$ & 9 & $6742982701/243$ \\
2 & $-45/8$ & 6 & $-102365/6$ & 10 & $-36001193817/100$ \\
3 & $244/9$ & 7 & $64639725/343$ & 11 & $6425982732150/1331$ \\
4 & $-12333/64$ & 8 & $-1140830253/512$ & 12 & $-9581431054999/144$ \\
\bottomrule
\end{tabular}
\end{center}

Writing $N_d=\sum_{k\mid d}n_{d/k}/k^3$, the corresponding values of
$n_1,\ldots,n_{12}$ are $3$, $-6$, $27$, $-192$, $1695$, $-17064$, $188454$,
$-2228160$, $27748899$, $-360012150$, $4827935937$, and $-66537713520$.


\sloppy

\end{document}

%% file: TwoParamMumford.tikz
\begin{tikzpicture}
  \definecolor{boundaryblue}{RGB}{20,83,148}
  \definecolor{webred}{RGB}{205,25,35}
  \definecolor{fibercyan}{RGB}{0,170,215}
  \definecolor{cellblue}{RGB}{242,246,250}

  \def\webvertices#1{%
    \coordinate (#1nw) at (-2,1);
    \coordinate (#1ne) at (1,1);
    \coordinate (#1se) at (1,-2);
  }

  \tikzset{
    x={(8mm,0)},
    y={(0,8mm)},
    line cap=round,
    line join=round,
    every node/.style={text=black,inner sep=1pt},
    diagram label/.style={font=\footnotesize},
    cell label/.style={font=\small},
    hirzebruch label/.style={
      cell label,
      fill=cellblue,
      inner sep=2pt
    },
    fiber label/.style={diagram label,text=fibercyan},
    class relations/.style={
      align=left,
      inner sep=0pt
    },
    outer web/.style={
      boundaryblue,
      line width=.8pt,
      dash pattern=on 4pt off 2.5pt
    },
    inner web/.style={webred,line width=1.25pt},
    fiber/.style={fibercyan,line width=.8pt},
    hirzebruch cell/.style={fill=cellblue}
  }
  \webvertices{s}
  \begin{scope}[shift={(1,-1)},scale=3]
    \webvertices{l}
  \end{scope}

  \coordinate (northwest) at (-2,1);
  \coordinate (northeast) at (2,2);
  \coordinate (southeast) at (1,-2);
  \coordinate (southeast long) at (2,-4);

  \coordinate (snwext) at ($(snw)+(northwest)$);
  \coordinate (sneext) at ($(sne)+(northeast)$);
  \coordinate (sseext) at ($(sse)+(southeast long)$);
  \coordinate (lnwext) at ($(lnw)+(northwest)$);
  \coordinate (lneext) at ($(lne)+(northeast)$);
  \coordinate (lseext) at ($(lse)+(southeast)$);

  \path[name path=large top] (lnw) -- (lne);
  \path[name path=large diagonal] (lnw) -- (lse);
  \path[name path=large right] (lne) -- (lse);
  \path[name path=small diagonal] (snw) -- (sse);
  \path[name path=small left ray] (snw) -- (snwext);
  \path[name path=small right ray] (sne) -- (sneext);
  \path[name path=small lower ray] (sse) -- (sseext);
  \path[name intersections={
    of=large top and small left ray,
    by=e3end
  }];
  \path[name intersections={
    of=large top and small right ray,
    by=e2start
  }];
  \path[name intersections={
    of=large diagonal and small lower ray,
    by=e1start
  }];


  \fill[hirzebruch cell]
    (snw) -- (sne) -- (e2start) -- (e3end) -- cycle;
  \fill[hirzebruch cell]
    (sne) -- (e2start) -- (lne) -- (lse) --
    (e1start) -- (sse) -- cycle;
  \fill[hirzebruch cell]
    (snw) -- (e3end) -- (lnw) -- (e1start) --
    (sse) -- cycle;

  \coordinate (box first corner) at (-7,-8);
  \coordinate (box second corner) at (7,4);
  \begin{scope}
    \clip (box first corner) rectangle (box second corner);
    \draw[outer web] (lnw) -- ($(lnw)!10!(lnwext)$);
    \draw[outer web] (lne) -- ($(lne)!10!(lneext)$);
    \draw[outer web] (lse) -- ($(lse)!10!(lseext)$);
    \draw[inner web] (snw) -- ($(snw)!10!(snwext)$);
    \draw[inner web] (sne) -- ($(sne)!10!(sneext)$);
    \draw[inner web] (sse) -- ($(sse)!10!(sseext)$);
  \end{scope}

  \draw[outer web] (lnw) -- (lne) -- (lse) -- cycle;
  \draw[inner web] (snw) -- (sne) -- (sse) -- cycle;

  \path (lnw)
    -- node[diagram label,pos=.22,below right=2pt and 7pt] {$e_3$}
    (e3end);
  \path (e2start)
    -- node[diagram label,midway,above=2.5pt] {$e_2$}
    (lne);
  \path (e1start)
    -- node[diagram label,midway,above right=1pt and -1pt] {$e_1$}
    (lse);
  \path (snw)
    -- node[diagram label,midway,below=5pt] {$\ell$}
    (sne);
  \path (sne)
    -- node[diagram label,midway,below right=1pt and 1pt] {$f$}
    (e2start);

  \path (e3end)
    -- node[diagram label,pos=.3,above=3pt] {$h_2$}
    (e2start);
  \path (lne)
    -- node[diagram label,midway,below right=11mm and 3pt] {$h_1$}
    (lse);
  \path (lnw)
    -- node[diagram label,midway,below right=15mm and 10mm] {$h_3$}
    (e1start);

  \coordinate (f2bottom) at (0,1);
  \coordinate (f2top) at (0,2);
  \draw[fiber] (f2top)
    -- node[fiber label,midway,right=4pt] {$f_2$}
    (f2bottom);

  \coordinate (f1left) at (1,-1);
  \coordinate (f1right) at (4,-1);
  \path (f1left) -- coordinate[midway] (f1mid) (f1right);
  \draw[fiber] (f1left)
    -- node[fiber label,midway,below=2pt] {$f_1$}
    (f1right);

  \coordinate (f3outer) at (-1,-2);
  \coordinate (f3inner) at (0,-1);
  \draw[fiber] (f3outer)
    -- node[fiber label,midway,above left=1pt and 1pt] {$f_3$}
    (f3inner);

  \path (snw) -- coordinate[midway] (Y2pos) (f2top);
  \path (e2start) -- coordinate[midway] (Y1upper) (f1left);
  \path (Y1upper) -- coordinate[midway] (Y1pos) (f1mid);
  \path (lnw) -- coordinate[midway] (Y3pos) (sse);
  \path (snwext) -- coordinate[midway] (D2pos) (sneext);
  \path (lne)
    -- coordinate[midway] (large right midpoint)
    (lse);
  \path (lnw)
    -- coordinate[midway] (large diagonal midpoint)
    (lse);
  \coordinate (Ppos) at
    (barycentric cs:snw=1,sne=1,sse=1);

  \node[hirzebruch label] at (Y2pos) {$Y_2$};
  \node[hirzebruch label] at (Y1pos) {$Y_1$};
  \node[hirzebruch label] at (Y3pos) {$Y_3$};
  \node[cell label, above=3mm] at (D2pos) {$D_2$};
  \node[cell label,right=10mm of large right midpoint] {$D_1$};
  \node[cell label,left=15mm of large diagonal midpoint]
    (D3label) {$D_3$};
  \node[cell label] at (Ppos) {$P$};

  \node[class relations,anchor=west] at (-6,-5.5)
    {$\begin{aligned}
       e_2 &= e_1+e_3\\
       f_1 &= f+e_2=f+e_3+e_1\\
       f_2 &= f\\
       f_3 &= f+e_3
     \end{aligned}$};

  \draw[thin] (box first corner) rectangle (box second corner);
  \pgfresetboundingbox
  \useasboundingbox (box first corner) rectangle (box second corner);
\end{tikzpicture}

%% file: MumfordInvitationWeb.tikz
\begin{tikzpicture}
  \definecolor{webred}{RGB}{190,35,45}
  \definecolor{wallblue}{RGB}{72,101,128}

  \tikzset{
    web/.style={webred,line width=.95pt},
    prism edge/.style={black!62,line width=.58pt},
    continuation/.style={black!46,line width=.52pt},
    flap edge/.style={black!50,line width=.52pt},
    complex edge/.style={black!75,line width=.62pt},
    discriminant/.style={
      webred,
      line width=.72pt,
      dash pattern=on 3pt off 2pt
    },
    projected web/.style={webred,line width=1.0pt},
    component label/.style={
      font=\small,
      inner sep=0pt,
      outer sep=0pt
    },
    graph label/.style={
      font=\footnotesize,
      inner sep=0pt,
      outer sep=0pt
    },
    projection arrow/.style={->,line width=.58pt}
  }

  \begin{scope}[
    shift={(1cm,2.05cm)},
    x={(4.00mm,.41mm)},
    y={(-.79mm,2.35mm)},
    z={(0,5.60mm)},
    line cap=round,
    line join=round
  ]
    \coordinate (a) at (-2,1,0);
    \coordinate (b) at (1,1,0);
    \coordinate (c) at (1,-2,0);

    \coordinate (ae) at (-4.8,2.4,1.4);
    \coordinate (be) at (2.4,2.4,1.4);
    \coordinate (ce) at (2.4,-4.8,1.4);

    \def\H{1.85}
    \coordinate (A) at (-2,1,\H);
    \coordinate (B) at (1,1,\H);
    \coordinate (C) at (1,-2,\H);
    \coordinate (AE) at (-4.8,2.4,{1.4+\H});
    \coordinate (BE) at (2.4,2.4,{1.4+\H});
    \coordinate (CE) at (2.4,-4.8,{1.4+\H});

    \coordinate (Aplus) at (-2,1,{\H+.55});
    \coordinate (Bplus) at (1,1,{\H+.55});
    \coordinate (Cplus) at (1,-2,{\H+.55});
    \coordinate (AEplus) at (-4.8,2.4,{1.4+\H+.55});
    \coordinate (BEplus) at (2.4,2.4,{1.4+\H+.55});
    \coordinate (CEplus) at (2.4,-4.8,{1.4+\H+.55});

    \coordinate (aefar) at (-5.65,2.825,1.825);
    \coordinate (befar) at (2.825,2.825,1.825);
    \coordinate (cefar) at (2.825,-5.65,1.825);

    \fill[wallblue,fill opacity=.16]
      (a) -- (ae) -- (AE) -- (A) -- cycle;
    \fill[wallblue,fill opacity=.16]
      (b) -- (be) -- (BE) -- (B) -- cycle;
    \fill[wallblue,fill opacity=.16]
      (c) -- (ce) -- (CE) -- (C) -- cycle;

    \draw[flap edge] (ae) -- (AE);
    \draw[flap edge] (be) -- (BE);
    \draw[flap edge] (ce) -- (CE);
    \draw[continuation] (AE) -- (AEplus);
    \draw[continuation] (BE) -- (BEplus);
    \draw[continuation] (CE) -- (CEplus);

    \draw[web] (a) -- (b) -- (c) -- cycle;
    \draw[web] (a) -- (aefar);
    \draw[web] (b) -- (befar);
    \draw[web] (c) -- (cefar);

    \fill[wallblue!24,fill opacity=.55]
      (a) -- (b) -- (B) -- (A) -- cycle;
    \fill[wallblue!46,fill opacity=.74]
      (b) -- (c) -- (C) -- (B) -- cycle;
    \fill[wallblue!40,fill opacity=.70]
      (c) -- (a) -- (A) -- (C) -- cycle;
    \fill[wallblue!58,fill opacity=.62]
      (a) -- (b) -- (c) -- cycle;

    \draw[prism edge] (a) -- (A);
    \draw[prism edge] (b) -- (B);
    \draw[prism edge] (c) -- (C);
    \draw[continuation] (A) -- (Aplus);
    \draw[continuation] (B) -- (Bplus);
    \draw[continuation] (C) -- (Cplus);

    \node[component label] at (0,0,.72) {$P$};
    \node[component label] at (-1.0,3.65,1.95) {$D_2$};
    \node[component label] at (3.0,-.1,1.95) {$D_1$};
    \node[component label] at (-2.65,-1.8,0.5) {$D_3$};
  \end{scope}

  \begin{scope}[
    shift={(6.55cm,3.35cm)},
    x={(4.45mm,.45mm)},
    y={(-.89mm,2.67mm)},
    z={(0,8.1mm)},
    line cap=round,
    line join=round
  ]
    \coordinate (at) at (-2,1,.75);
    \coordinate (bt) at (1,1,.75);
    \coordinate (ct) at (1,-2,.75);
    \coordinate (ab) at (-2,1,-.75);
    \coordinate (bb) at (1,1,-.75);
    \coordinate (cb) at (1,-2,-.75);

    \coordinate (am) at (-2,1,0);
    \coordinate (bm) at (1,1,0);
    \coordinate (cm) at (1,-2,0);

    \coordinate (atext) at (-3.8,1.9,1.65);
    \coordinate (btext) at (1.9,1.9,1.65);
    \coordinate (ctext) at (1.9,-3.8,1.65);
    \coordinate (abext) at (-3.8,1.9,-1.65);
    \coordinate (bbext) at (1.9,1.9,-1.65);
    \coordinate (cbext) at (1.9,-3.8,-1.65);

    \coordinate (amext) at (-4.4,2.2,0);
    \coordinate (bmext) at (2.2,2.2,0);
    \coordinate (cmext) at (2.2,-4.4,0);

    \fill[black!3] (ab) -- (bb) -- (cb) -- cycle;
    \fill[black!4] (ab) -- (bb) -- (bt) -- (at) -- cycle;
    \fill[black!5] (bb) -- (cb) -- (ct) -- (bt) -- cycle;
    \fill[black!2] (cb) -- (ab) -- (at) -- (ct) -- cycle;
    \fill[black!3] (at) -- (bt) -- (ct) -- cycle;

    \draw[complex edge] (at) -- (bt) -- (ct) -- cycle;
    \draw[complex edge] (ab) -- (bb) -- (cb) -- cycle;
    \draw[complex edge] (at) -- (ab);
    \draw[complex edge] (bt) -- (bb);
    \draw[complex edge] (ct) -- (cb);
    \draw[complex edge] (at) -- (atext);
    \draw[complex edge] (bt) -- (btext);
    \draw[complex edge] (ct) -- (ctext);
    \draw[complex edge] (ab) -- (abext);
    \draw[complex edge] (bb) -- (bbext);
    \draw[complex edge] (cb) -- (cbext);

    \draw[discriminant] (am) -- (bm) -- (cm) -- cycle;
    \draw[discriminant] (am) -- (amext);
    \draw[discriminant] (bm) -- (bmext);
    \draw[discriminant] (cm) -- (cmext);

    \node[component label] at (0,0,0) {$P$};
    \node[component label] at (-.5,0,1.6) {$P_+$};
    \node[component label] at (0,0,-1.85) {$P_-$};
    \node[component label] at (-1.5,6,0) {$D_2$};
    \node[component label] at (3.05,-.25,0) {$D_1$};
    \node[component label] at (-2.8,-3,0) {$D_3$};
  \end{scope}

  \draw[projection arrow] (8.87,3) -- (9.60,3);

  \begin{scope}[
    shift={(11cm,3cm)},
    x={(3.60mm,0)},
    y={(0,3.60mm)},
    line cap=round,
    line join=round
  ]
    \coordinate (a) at (-2,1);
    \coordinate (b) at (1,1);
    \coordinate (c) at (1,-2);
    \coordinate (aext) at (-4.4,2.2);
    \coordinate (bext) at (2.2,2.2);
    \coordinate (cext) at (2.2,-4.4);

    \draw[projected web] (a) -- (b) -- (c) -- cycle;
    \draw[projected web] (a) -- (aext);
    \draw[projected web] (b) -- (bext);
    \draw[projected web] (c) -- (cext);

    \node[component label,font=\footnotesize] at (0,0) {$P$};
    \node[component label,font=\footnotesize] at (-.2,2.45) {$D_2$};
    \node[component label,font=\footnotesize] at (2.6,-.35) {$D_1$};
    \node[component label,font=\footnotesize] at (-1.8,-1.8) {$D_3$};
  \end{scope}
\end{tikzpicture}

%% file: P2local.tikz
\begin{tikzpicture}[
  line cap=round,
  line join=round,
  edge/.style={draw=black,line width=.4pt},
  outeredge/.style={draw=black,line width=.75pt},
  backedge/.style={draw=black!50,line width=.75pt},
  cell/.style={fill=tropicalblue,fill opacity=.18},
  planeedge/.style={draw=planecolor,line width=.4pt},
  planeouter/.style={draw=planecolor,line width=.75pt},
  planeback/.style={draw=planecolor!65,line width=.6pt},
  planecell/.style={fill=planecolor,fill opacity=.18}
]
  \definecolor{tropicalblue}{RGB}{72,101,128}
  \definecolor{planecolor}{RGB}{165,95,70}

  \begin{scope}[
    shift={(1.25cm,1.20cm)},
    x={(1.00cm,0cm)},
    y={(0cm,1.00cm)}
  ]
    \coordinate (sigma-centre) at (0,0);
    \coordinate (sigma-one) at (1,0);
    \coordinate (sigma-two) at (0,1);
    \coordinate (sigma-three) at (-1,-1);

    \fill[planecell]
      (sigma-centre) -- (sigma-one) -- (sigma-two) -- cycle;
    \fill[planecell]
      (sigma-centre) -- (sigma-two) -- (sigma-three) -- cycle;
    \fill[planecell]
      (sigma-centre) -- (sigma-three) -- (sigma-one) -- cycle;
    \draw[planeouter]
      (sigma-one) -- (sigma-two) -- (sigma-three) -- cycle;
    \draw[planeedge] (sigma-centre) -- (sigma-one);
    \draw[planeedge] (sigma-centre) -- (sigma-two);
    \draw[planeedge] (sigma-centre) -- (sigma-three);
  \end{scope}

  \node[scale=1.25] at (3.08cm,1.20cm) {$\lhook\joinrel\longrightarrow$};

  \begin{scope}[
    shift={(4.95cm,1.10cm)},
    x={(-1.14cm,.08cm)},
    y={(.55cm,.43cm)},
    z={(0cm,.95cm)}
  ]
    \coordinate (p) at (0,0,0);
    \coordinate (d-one) at (1,0,0);
    \coordinate (d-two) at (0,1,0);
    \coordinate (d-three) at (-1,-1,0);
    \coordinate (p-plus) at (0,0,1);
    \coordinate (p-minus) at (0,0,-1);

    \fill[planecell] (p) -- (d-one) -- (d-two) -- cycle;
    \fill[planecell] (p) -- (d-two) -- (d-three) -- cycle;
    \fill[planecell] (p) -- (d-three) -- (d-one) -- cycle;
    \fill[cell] (p) -- (p-plus) -- (d-one) -- cycle;
    \fill[cell] (p) -- (p-plus) -- (d-two) -- cycle;
    \fill[cell] (p) -- (p-plus) -- (d-three) -- cycle;
    \fill[cell] (p) -- (p-minus) -- (d-one) -- cycle;
    \fill[cell] (p) -- (p-minus) -- (d-two) -- cycle;
    \fill[cell] (p) -- (p-minus) -- (d-three) -- cycle;

    \draw[backedge] (d-one) -- (d-two);
    \draw[outeredge] (d-two) -- (d-three) -- (d-one);
    \draw[planeedge] (p) -- (d-one);
    \draw[planeedge] (p) -- (d-two);
    \draw[planeedge] (p) -- (d-three);
    \draw[outeredge] (p-plus) -- (d-one);
    \draw[outeredge] (p-plus) -- (d-two);
    \draw[outeredge] (p-plus) -- (d-three);
    \draw[outeredge] (p-minus) -- (d-one);
    \draw[backedge] (p-minus) -- (d-two);
    \draw[outeredge] (p-minus) -- (d-three);
    \draw[edge] (p-plus) -- (p) -- (p-minus);
  \end{scope}

\end{tikzpicture}

%% file: TypesExamples.tikz
\begin{tikzpicture}[
  line cap=round,
  line join=round
]
  \definecolor{webred}{RGB}{205,25,35}
  \definecolor{tropicalblue}{RGB}{0,50,255}

  \tikzset{
    inner edge/.style={webred,line width=1.05pt},
    outer edge/.style={black!70,line width=.68pt,
      dash pattern=on 3.2pt off 2.1pt},
    lower edge/.style={black!78,line width=.55pt},
    type curve/.style={tropicalblue,line width=1.05pt},
    marked point/.style={fill=tropicalblue,draw=none},
    green point/.style={fill=tropicalblue,draw=none},
    class label/.style={font=\tiny,text=black,inner sep=.6pt},
    flow arrow/.style={tropicalblue,line width=1.05pt,->,>=stealth}
  }

  \newcommand{\TypesUpperBase}[1]{%
    \coordinate (#1-snw) at (-2,1);
    \coordinate (#1-sne) at (1,1);
    \coordinate (#1-sse) at (1,-2);
    \coordinate (#1-lnw) at (-5,2);
    \coordinate (#1-lne) at (4,2);
    \coordinate (#1-lse) at (4,-7);

    \coordinate (#1-e-three-end) at (-4,2);
    \coordinate (#1-e-two-start) at (2,2);
    \coordinate (#1-e-one-start) at (3,-6);

    \coordinate (#1-f-two-top) at (0,2);
    \coordinate (#1-f-two-bottom) at (0,1);
    \coordinate (#1-f-one-left) at (1,-1);
    \coordinate (#1-f-one-right) at (4,-1);
    \coordinate (#1-f-three-outer) at (-1,-2);
    \coordinate (#1-f-three-inner) at (0,-1);

    \draw[outer edge]
      (-7,3) -- (#1-lnw) -- (#1-lne) -- (6,4);
    \draw[outer edge]
      (#1-lnw) -- (#1-lse) -- (4.5,-8);
    \draw[outer edge] (#1-lne) -- (#1-lse);

    \draw[inner edge]
      (-7,3.5) -- (#1-snw) -- (#1-sne) -- (4,4);
    \draw[inner edge]
      (#1-snw) -- (#1-sse) -- (4,-8);
    \draw[inner edge] (#1-sne) -- (#1-sse);
  }

  \newcommand{\TypesLowerBase}[1]{%
    \coordinate (#1-centre) at (0,0);
    \coordinate (#1-one) at (1,0);
    \coordinate (#1-two) at (0,1);
    \coordinate (#1-three) at (-1,-1);

    \draw[lower edge]
      (#1-one) -- (#1-two) -- (#1-three) -- cycle;
    \draw[lower edge] (#1-centre) -- (#1-one);
    \draw[lower edge] (#1-centre) -- (#1-two);
    \draw[lower edge] (#1-centre) -- (#1-three);
  }

  \begin{scope}[shift={(2.05,4.40)},x=.265cm,y=.265cm]
    \TypesUpperBase{a}

    \coordinate (a-top-cusp) at (-.30,1.95);
    \coordinate (a-right-cusp) at (1.95,-.30);
    \coordinate (a-left-cusp) at (-.82,-.82);

    \coordinate (a-tr-one) at (-.15,.3);
    \coordinate (a-tr-two) at (.24,-.30);
    \coordinate (a-rl-one) at (.24,-.30);
    \coordinate (a-rl-two) at (-.22,-.22);
    \coordinate (a-lt-one) at (-.22,-.22);
    \coordinate (a-lt-two) at (-.15,.3);

    \draw[type curve] (-.18,1) -- (-.18,2);
    \draw[type curve] (1,-.25) -- (4,-.25);
    \draw[type curve] (-.5,-.5) -- (-1.5,-1.5);

    \fill[tropicalblue]
      (a-top-cusp)
      .. controls (a-tr-one) and (a-tr-two)
      .. (a-right-cusp)
      .. controls (a-rl-one) and (a-rl-two)
      .. (a-left-cusp)
      .. controls (a-lt-one) and (a-lt-two)
      .. cycle;

    \foreach \p in {(-.18,1),(1,-.25),(-.5,-.5)}
      \fill[marked point] \p circle[radius=1.3pt];
  \end{scope}

  \begin{scope}[shift={(1.45,1.50)},x=1.18cm,y=1.18cm]
    \TypesLowerBase{a-low}
    \draw[type curve]
      (a-low-three) -- (a-low-centre) -- (a-low-one);
    \draw[type curve] (a-low-centre) -- (a-low-two);
    \foreach \p in {a-low-one,a-low-two,a-low-three}
      \fill[green point]
        ($(\p)!.02!(a-low-centre)$) circle[radius=1.8pt];
    \fill[green point] (a-low-centre) circle[radius=1.8pt];
    \node[class label,above=2pt] at (a-low-two) {$f_2$};
    \node[class label,below right=2pt and 2pt] at (a-low-centre) {$\ell$};
    \node[class label,right=3pt] at (a-low-one) {$f_1$};
    \node[class label,above left=2pt and 1pt] at (a-low-three) {$f_3$};
  \end{scope}

  \begin{scope}[shift={(5.85,4.40)},x=.265cm,y=.265cm]
    \TypesUpperBase{b}

    \draw[type curve]
      (b-e-two-start) -- (b-sne) -- (b-sse);
    \draw[flow arrow]
      ($(b-sse)+(0,-.10)$) -- +(0,-1.15);
    \foreach \p in {b-sne,b-sse}
      \fill[marked point] (\p) circle[radius=1.3pt];
  \end{scope}

  \begin{scope}[shift={(5.25,1.50)},x=1.18cm,y=1.18cm]
    \TypesLowerBase{b-low}
    \coordinate (b-low-upper) at (.4,.6);
    \coordinate (b-low-mid) at (.4,0);
    \draw[type curve] (b-low-upper) -- (b-low-mid);
    \fill[green point] (b-low-upper) circle[radius=1.8pt];
    \fill[green point] (b-low-mid) circle[radius=1.8pt];
    \draw[flow arrow] (b-low-mid) -- +(0,-.3);
    \node[class label,above right=2pt and 2pt] at (b-low-upper) {$f_2$};
    \node[class label,above right=2pt and 2pt] at (b-low-mid) {$\ell$};
  \end{scope}

  \begin{scope}[shift={(9.65,4.40)},x=.265cm,y=.265cm]
    \TypesUpperBase{c}
    \coordinate (c-mark) at ($(c-sne)!.43!(c-sse)$);

    \draw[type curve]
      (c-e-two-start) -- (c-sne) -- (c-sse) --
      (c-e-one-start) -- (c-lse);
    \draw[flow arrow] (c-sne) -- +(-.78,-.78);
    \foreach \p in {c-sne,c-sse}
      \fill[marked point] (\p) circle[radius=1.3pt];
  \end{scope}

  \begin{scope}[shift={(9.05,1.50)},x=1.18cm,y=1.18cm]
    \TypesLowerBase{c-low}

    \coordinate (c-low-upper-edge) at (.32,.68);
    \coordinate (c-low-interior) at (.52,0);
    \coordinate (c-low-new) at (.32,.20);
    \coordinate (c-low-lower) at (.68,-.16);
    \coordinate (c-low-arrow-end) at
      ($(c-low-centre)!.12!(c-low-one)$);

    \draw[type curve]
      (c-low-upper-edge) -- (c-low-new) -- (c-low-interior) --
      (c-low-lower) -- (c-low-one);
    \foreach \p in
      {c-low-upper-edge,c-low-new,c-low-interior,c-low-lower}
      \fill[green point] (\p) circle[radius=1.8pt];
    \fill[green point]
      ($(c-low-one)!.02!(c-low-centre)$) circle[radius=1.8pt];
    \draw[flow arrow] (c-low-new) -- (c-low-arrow-end);
    \node[class label,above right=2pt and 2pt]
      at (c-low-upper-edge) {$2f_2$};
    \node[class label,left=3pt,yshift=2pt] at (c-low-new) {$0$};
    \node[class label,above right=2pt and 0pt] at (c-low-interior) {$\ell$};
    \node[class label,right=3pt] at (c-low-one) {$2e_1$};
    \node[class label,below=3pt] at (c-low-lower) {$3f_3$};
  \end{scope}

  \begin{scope}[shift={(13.45,4.40)},x=.265cm,y=.265cm]
    \TypesUpperBase{d}

    \draw[type curve] (d-lnw) -- (d-lne);
    \draw[type curve] (d-f-two-top) -- (d-f-two-bottom);
    \draw[flow arrow]
      ($(d-f-two-bottom)+(0,-.08)$) -- +(0,-1.05);
    \foreach \p in
      {d-e-three-end,d-e-two-start,d-f-two-top,d-f-two-bottom}
      \fill[marked point] (\p) circle[radius=1.3pt];
  \end{scope}

  \begin{scope}[shift={(12.85,1.50)},x=1.18cm,y=1.18cm]
    \TypesLowerBase{d-low}
    \draw[type curve]
      (d-low-three) -- (d-low-two) -- (d-low-one);
    \foreach \p in {d-low-one,d-low-two,d-low-three}
      \fill[green point]
        ($(\p)!.02!(d-low-centre)$) circle[radius=1.8pt];
    \draw[flow arrow] (d-low-two) -- (d-low-centre);
    \node[class label,above=2pt] at (d-low-two) {$h_2+f_2$};
    \node[class label,right=3pt] at (d-low-one) {$e_2$};
    \node[class label,above left=2pt and 1pt] at (d-low-three) {$e_3$};
  \end{scope}

  \pgfresetboundingbox
  \useasboundingbox (-.02,.25) rectangle (15.20,5.50);
\end{tikzpicture}